
\documentclass[a4paper,reqno,10pt]{amsart}
\usepackage{amsfonts}
\usepackage{amsmath}
\usepackage{amssymb}
\usepackage{tikz-cd}
\usepackage[sort]{cite}
\usepackage{bbm}
\usepackage{amscd}
\usepackage[left=2cm,right=2cm,bottom=3cm,top=3cm]{geometry}
\usepackage{etoolbox}
\usepackage{filecontents}
\usepackage{xcolor}
\usepackage{hyperref}

\setcounter{MaxMatrixCols}{10}

\hypersetup{
  colorlinks=true,
  linkcolor=blue,
  citecolor=red
}
\newtheorem{theorem}{Theorem}[section]
\newtheorem{proposition}[theorem]{Proposition}
\theoremstyle{definition}
\newtheorem{lemma}[theorem]{Lemma}
\newtheorem{claim*}[theorem]{claim*}
\newtheorem{definition}[theorem]{Definition}
\newtheorem{corollary}[theorem]{Corollary}

\newtheorem{remark}[theorem]{Remark}

\newtheorem{example}[theorem]{Example}

\begin{document}
\title[An infinitary Gowers--Hales--Jewett Ramsey theorem]{Actions of trees
on semigroups, and\\
an infinitary Gowers--Hales--Jewett Ramsey theorem}
\author[Martino Lupini]{Martino Lupini}
\address{Martino Lupini, Mathematics Department, California Institute of
Technology, 1200 East California Boulevard, Mail Code 253-37, Pasadena, CA
91125}
\email{lupini@caltech.edu}
\urladdr{http:n//www.lupini.org/}
\thanks{The author was supported by the NSF Grant DMS-1600186.}
\dedicatory{}
\subjclass[2000]{Primary 05D10, 54D80; Secondary 20M99, 05C05, 06A06}
\keywords{Gowers Ramsey Theorem, Galvin--Glazer--Hindman theorem,
Milliken--Taylor theorem, Hales--Jewett theorem, tetris operation, variable
word, partial semigroup, ultrafilter, Stone-\v{C}ech compactification}

\begin{abstract}
We introduce the notion of (Ramsey) action of a tree on a (filtered)
semigroup. We then prove in this setting a general result providing a common
generalization of the infinitary Gowers Ramsey theorem for multiple tetris
operations, the infinitary Hales--Jewett theorems (for both located and
nonlocated words), and the Farah--Hindman--McLeod Ramsey theorem for layered
actions on partial semigroups. We also establish a polynomial version of our
main result, recovering the polynomial Milliken--Taylor theorem of
Bergelson--Hindman--Williams as a particular case. We present applications
of our Ramsey-theoretic results to the structure of delta sets in amenable
groups.
\end{abstract}

\maketitle

\section{Introduction\label{Section:introduction}}

The finitary \emph{Hales--Jewett theorem} \cite{hales_regularity_1963} is a
fundamental combinatorial pigeonhole principle. Several years after the
original proof of Hales and Jewett, two deep infinitary strengthenings of
the Hales--Jewett theorem (for located and nonlocated words) have been
proved in \cite{bergelson_partition_1994} using the theory of ultrafilters
and algebra in the Stone-\v{C}ech compactification. In another direction,
and around the same time, Gowers established in \cite{gowers_lipschitz_1992}
another fundamental combinatorial pigeonhole principle, which has been since
then referred to the (infinite) \emph{Gowers Ramsey theorem}. Gowers' Ramsey
theorem is a far-reaching generalization of \emph{Hindman's theorem on
finite unions} \cite{hindman_finite_1974}. We refer to \cite%
{gowers_ramsey_2003,kanellopoulos_proof_2004,todorcevic_introduction_2010,tyros_primitive_2015,ojeda-aristizabal_finite_2015}
for other proofs of such a result and its finitary counterpart, where
explicit bounds on the quantities involved are also obtained.

A common generalization of Gowers' Ramsey theorem and the infinite
Hales--Jewett theorems has been established by Farah--Hindman--McLeod in the
setting of \emph{layered actions }on \emph{adequate partial semigroups }\cite%
{farah_partition_2002}. In a different direction, the infinite Gowers Ramsey
theorem has been strengthened in \cite{lupini_gowers_2016} by considering 
\emph{multiple tetris operations}. This answered a question of Barto\v{s}ov%
\'{a} and Kwiatkowska from \cite{bartosova_gowers_2014}, where the
corresponding finitary statement is proved. A common generalization of
Gowers' Ramsey theorem for multiple tetris operations and the \emph{%
Milliken--Taylor theorem }\cite{milliken_ramseys_1975,taylor_canonical_1976}
is also provided in \cite{lupini_gowers_2016}.

Gowers' Ramsey theorem for multiple tetris operation does not fit in the
framework of layered actions on partial semigroups developed by
Farah--Hindman--McLeod in \cite{farah_partition_2002}. It is therefore
natural to wonder whether there exists a unifying combinatorial principle
that lies at the heart of both Gowers' Ramsey theorem for multiple tetris
operations and the infinite Hales--Jewett theorems, as well as the
Farah--Hindman--McLeod Ramsey theorem for layered actions on partial
semigroups. The goal of the present paper is to provide such a unifying
combinatorial principle within the framework, here introduced, of \emph{\
Ramsey actions of rooted trees on filtered semigroups}. Our main result is
Theorem \ref{Theorem:product-tree-filtered}, which provides a common
generalization of all the results mentioned above. One can also obtain from
such a general result more direct common generalizations of Gowers' theorem
for multiple tetris operations and the Hales--Jewett theorems (for located
and nonlocated words). Such common generalizations---Theorem \ref%
{Theorem:Gowers-Hales-Jewett} and Theorem \ref%
{Theorem:nonlocated-Gowers-Hales-Jewett}---are stated in terms of variable
words with \emph{variables indexed by a finite rooted tree}, and variable
substitution maps that respect the tree structure. We also provide a common
generalization of our main result---Theorem \ref%
{Theorem:product-tree-filtered}---and the \emph{polynomial Milliken--Taylor
theorem} of Bergelson--Hindman--Williams \cite[Corollary 3.5]%
{bergelson_polynomial_2014}; see Theorem \ref{Theorem:polynomial-Gowers}.
All the results of this paper are infinitary, and imply by a routine
compactness argument their finitary counterparts. We omit the statement of
these finitary counterparts, leaving it to the interested reader. We will
conclude by presenting applications of some of our Ramsey-theoretic results
to the structure of delta sets in amenable graphs.

The present paper consists of six sections, besides this introduction. In
Section \ref{Section:action-compact} we introduce and study the notion of
action of an ordered set and of a rooted tree on a compact right topological
semigroup. Section \ref{Section:action-partial} deals with the notion of
(Ramsey) action of an ordered set and of a rooted tree on a partial
semigroup. General result for Ramsey actions of rooted trees on adequate
partial semigroups is obtained here (Theorem \ref%
{Theorem:monochromatic-action} and Corollary \ref%
{Corollary:product-action-tree}). Section \ref{Section:examples} explains
how Gowers' theorem for multiple tetris operations and the Hales--Jewett
theorem are both subsumed by Theorem \ref{Theorem:monochromatic-action}.
Section \ref{Section:action-filtered} considers the even more general
framework of (Ramsey) actions of rooted trees on filtered semigroups. It is
explained here how all the previous results extend to this more general
framework. This allows one to recover the infinite Hales--Jewett theorem for
nonlocated words. Section \ref{Section:polynomial} presents a further
polynomial generalization, which subsumes this main result of the paper as
well as the polynomial Milliken--Taylor theorem \cite[Corollary 3.5]%
{bergelson_polynomial_2014}. Finally, Section \ref{Section:combinatorial}
presents applications to combinatorial configurations contained in delta
sets is amenable groups.

After the present paper was written, we have been informed that a general
Ramsey statement subsuming Gowers' theorem for multiple tetris operations
and the infinitary Hales--Jewett theorems has  been independently obtained by Solecki
with different methods. We refer the reader to \cite{solecki_general_2016}
for this alternative approach.

In the rest of this paper we denote by $\omega $ be the set of natural
numbers including $0$, and $\mathbb{N}$ be the set of natural numbers
different from zero. We identify an element $n$ of $\omega $ with the set $%
\left\{ 0,1,\ldots ,n-1\right\} $ of its predecessors. If $A,B$ are finite
nonempty subsets of $\omega $, we write $A<B$ if the maximum element of $A$
is smaller than the minimum element of $B$. We also write $A<\ell $ for $%
A\subset \omega $ and $\ell \in \omega $ if the largest element of $A$ is
smaller than $\ell $. Given a set $A$ we let $\left[ A\right] ^{<\aleph
_{0}} $ be the set of finite subsets of $A$. If $D$ is a set, then we denote
by $\beta D$ the space of ultrafilters on $D$; see \cite[Chapter 3]%
{hindman_algebra_2012}. This is endowed with a canonical compact Hausdorff
topology, having the sets $\overline{A}=\left\{ \mathcal{U}\in \beta D:A\in 
\mathcal{U}\right\} $ for $A\subset D$ as basis of open (and closed) sets.
We will use in the rest of the paper the notation of \emph{ultrafilter
quantifiers}; see \cite[Chapter 1]{todorcevic_introduction_2010}. If $\psi
\left( x\right) $ is a formula depending on a variable $x$ ranging over $D$,
then we write $\left( \mathcal{U}x\right) $ $\varphi \left( x\right) $ as an
abbreviation for $\left\{ x\in D:\varphi \left( x\right) \text{ holds}%
\right\} \in D$. In particular, we have that $\left( \mathcal{U}x\right) $ $%
x\in A$ is equivalent to the assertion that $A\in \mathcal{U}$. By a \emph{%
finite coloring }of the set $D$ we mean a function $c:D\rightarrow n$ for
some $n\in \omega $. Any such a coloring admits a \emph{canonical extension}%
, which we still denote by $c$, to a finite coloring of $\beta D$, obtained
by setting $c\left( \mathcal{U}\right) =i$ if and only if $\left( \mathcal{U}%
x\right) $, $c\left( x\right) =i$.

\subsubsection*{Acknowledgments} We would like to thank Andy Zucker for his comments on a first draft of the present paper.

\section{Actions of trees on compact right topological semigroups\label%
{Section:action-compact}}

\subsection{Compact right topological semigroups\label%
{Subsection:compact-semigroup}}

We recall here some notions concerning compact right topological semigroups.
An (additively denoted) compact right topological semigroup $X$ is a
semigroup $\left( X,+\right) $ endowed with a compact topology with the
property that, for every $z\in X$, the right translation map $x\mapsto x+z$
is continuous. \emph{In the following we assume all the compact right
topological semigroup to be Hausdorff}. An element $e$ of $X$ is \emph{%
idempotent }if $e+e=e$. A classical result of Ellis \cite[Corollary 2.10]%
{ellis_lectures_1969}---see also \cite[Lemma 2.1]%
{todorcevic_introduction_2010}---asserts that any compact right topological
semigroup contains idempotent elements. One can define an order among
idempotents of $X$ by setting $e_{0}\leq e_{1}$ if and only if $%
e_{0}+e_{1}=e_{1}+e_{0}=e_{0}$. An idempotent element of $X$ is \emph{%
minimal }if it is minimal with respect to such an order. The proof of \cite[%
Corollary 2.10]{ellis_lectures_1969} also shows that for any idempotent
element $e_{0}$ of $X$ there exists a minimal idempotent $e_{0}$ of $X$ such
that $e\leq e_{0}$.

A closed subsemigroup $A$ of $X$ is a \emph{nonempty }closed subset of $X$
with the property that $x+y\in A$ whenever $x,y\in A$. Observe that the
idempotent elements of $X$ are precisely the closed subsemigroups of $X$
that contain a single element. A closed subsemigroup $A$ of $X$ is a closed 
\emph{bilateral ideal} if $x+a$ and $a+x$ belong to $A$ whenever $a\in A$
and $x\in X$. We denote by $\mathcal{S}\left( X\right) $ the set of closed
subsemigroups of $X$. We define an order in $\mathcal{S}\left( X\right) $ by
setting $A\leq B$ if and only if $\left( A+B\right) \cup \left( B+A\right)
\subset A$. Clearly a subsemigroup $A$ of $X$ is a bilateral ideal if and
only if $A\leq X$. Observe that such an order extend the order on
idempotents defined above, when an idempotent element $e$ of $X$ is
identified with the closed subsemigroup $\left\{ e\right\} $. If $X$ is a
compact right topological semigroup, we define $\mathrm{\mathrm{End}}\left(
X\right) $ to be the set of continuous semigroup homomorphisms $\tau
:X\rightarrow X$. Observe that $\mathrm{End}\left( X\right) $ is a semigroup
with respect to composition.

In the following we will regard $\mathcal{S}\left( X\right) $ as an ordered
set endowed with such an ordering. (Here and in the following, all the
ordered sets are supposed to be \emph{partially }ordered.) We record here
for future reference the following well known fact; see also \cite[Lemma 2.3]%
{todorcevic_introduction_2010}.

\begin{lemma}
\label{Lemma:minimal-lift}Suppose that $X\ $is a compact right topological
semigroup. If $A,B\in \mathcal{S}\left( X\right) $ and $A\leq B$, then for
any idempotent $b\in B$ there exists a minimal idempotent $a$ of $A$ such
that $a\leq b$.
\end{lemma}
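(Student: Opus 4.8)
The plan is to first produce \emph{some} idempotent $a_{0}$ of $A$ with $a_{0}\le b$, and then replace it by a minimal idempotent of $A$ lying below it. The one delicate point is that in a right topological semigroup left translations need not be continuous, so one must be careful about which auxiliary sets are closed. To begin, note that $A$, being a nonempty closed subset of the compact Hausdorff semigroup $X$ that is closed under $+$, is itself a compact right topological semigroup; hence the cited consequence of Ellis' theorem applies with $A$ in place of $X$, giving: every idempotent of $A$ lies above a minimal idempotent of $A$.

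The key step is to work with $A+b=\{a+b:a\in A\}$ rather than with the more symmetric-looking sets $b+A$ or $b+A+b$. Since $A\le B$ and $b\in B$, we have $b+a'\in B+A\subset A$ for every $a'\in A$, so $(a+b)+(a'+b)=a+(b+a')+b\in A+b$ because $a+(b+a')\in A+A\subset A$; thus $A+b$ is a subsemigroup of $X$. Moreover $A+b$ is the image of the compact set $A$ under the continuous right translation $x\mapsto x+b$, hence compact, hence closed, hence a closed subsemigroup of $X$. By Ellis' theorem it contains an idempotent $f$. Writing $f=a+b$ with $a\in A$ and using $b+b=b$ we get $f+b=a+b+b=a+b=f$.

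Now set $a_{0}:=b+f$. Then $a_{0}\in B+A\subset A$ since $f\in A+b\subset A$. A short computation using $f+f=f$, $b+b=b$ and $f+b=f$ gives $a_{0}+a_{0}=b+f+b+f=b+f+f=b+f=a_{0}$, so $a_{0}$ is idempotent; moreover $b+a_{0}=b+b+f=b+f=a_{0}$ and $a_{0}+b=b+f+b=b+f=a_{0}$, that is, $a_{0}\le b$. Finally, let $a$ be a minimal idempotent of $A$ with $a\le a_{0}$, which exists by the first paragraph. From $a+a_{0}=a_{0}+a=a$ and $a_{0}+b=b+a_{0}=a_{0}$ we obtain $a+b=(a+a_{0})+b=a+(a_{0}+b)=a+a_{0}=a$ and likewise $b+a=a$, so $a\le b$, and $a$ is a minimal idempotent of $A$ as required.

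The only genuine obstacle is the observation in the second paragraph: one must pass through $A+b$ (which is closed, precisely because right translation is continuous) to find an idempotent absorbing $b$ on one side, and then recover two-sided absorption by replacing that idempotent $f$ with $b+f$; everything else is a routine manipulation of idempotents and the order on them.
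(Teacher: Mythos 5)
Your proof is correct. It shares with the paper's proof the key object, namely the set $A+b$, which both arguments use as a closed subsemigroup of $X$ contained in $A$ (you make explicit the point the paper leaves implicit: $A+b$ is closed because it is the continuous right-translate of the compact set $A$, and this is exactly where right topological suffices). Where you diverge is in how minimality is obtained. The paper takes a \emph{minimal} idempotent $a$ of $A+b$ from the start, deduces $b+a=a=a+b$ from $(b+a)+a=b+a$ together with minimality, and then shows directly that $a$ is minimal in all of $A$ by proving that any idempotent $z\in A$ with $z+a=z$ satisfies $z+b=z$ and hence lies in $A+b$; this is shorter, but it leans on the standard (and in the paper unproved) fact that minimality of an idempotent in the two-sided order coincides with minimality in the one-sided order $f+e=f$. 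You instead take an arbitrary idempotent $f$ of $A+b$, verify by hand that $a_{0}=b+f$ is an idempotent of $A$ with two-sided absorption $a_{0}+b=b+a_{0}=a_{0}$, and only then descend to a minimal idempotent $a\leq a_{0}$ of $A$, finishing by transitivity of $\leq$. Your route is slightly longer but more self-contained: it uses only the two facts about idempotents already quoted in Subsection 2.1 (existence of idempotents, and the existence of a minimal idempotent below any given one), and it sidesteps the one-sided/two-sided minimality subtlety entirely.
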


\begin{proof}
Consider a minimal idempotent element $a$ of $A+b$. Observe that $b+a$ is an
idempotent element of $b+a$ such that $\left( b+a\right) +a=b+a$. Therefore
by minimality of $a$ inside $A+b$ we have that $b+a=a=a+b$ and hence $a\leq
b $. Suppose now that $z$ is an idempotent element of $A$ such that $z+a=z$.
Then we have that $z+b=z+a+b=z+a=z$ and hence $z\in A+b$. It follows from
minimality of $a$ inside $A+b$ that $z=a$. Therefore $a$ is a minimal
idempotent element of $A$.
\end{proof}

\subsection{Actions of trees on compact right topological semigroups\label%
{Subsection:action-compact-semigroup}}

Suppose that $\mathbb{P}$ is an ordered set, and $X$ is a compact right
topological semigroup.

\begin{definition}
\label{Definition:action-compact}An \emph{action} $\alpha $ of $\mathbb{P}\ $%
on $X$ is given by

\begin{itemize}
\item an order-preserving function $\mathbb{P}\rightarrow \mathcal{S}\left(
X\right) $, $t\mapsto X_{t}$,

\item a subsemigroup $\mathcal{F}_{\alpha }\subset \mathrm{End}\left(
X\right) $,
\end{itemize}

such that for every $\tau \in \mathcal{F}_{\alpha }$ there exists an
function $f_{\tau }:\mathbb{P}\rightarrow \mathbb{P}$---which we call the 
\emph{spine} of $\tau $---such that $\tau $ maps $X_{t}$ to $X_{f_{\tau
}\left( t\right) }$ for every $t\in \mathbb{P}$, and such that $\tau \left(
x\right) =x$ for any $x\in X_{t}$ and $t\in \mathbb{P}$ such that $f_{\tau
}\left( t\right) =t$.
\end{definition}

Given an action $\alpha $ of $\mathbb{P}$ on $X$ we let $X_{\alpha }$ be set
of functions $\xi :\mathbb{P}\rightarrow X$ such that $\xi \left( t\right)
\in X_{t}$ and $\tau \circ \xi =\xi \circ f_{\tau }$ for every $\tau \in 
\mathcal{F}_{\alpha }$ and $t\in \mathbb{P}$. When $X_{\alpha }$ is
nonempty, we endow $X_{\alpha }$ with the product topology and the entrywise
operation. This turns $X_{\alpha }$ into a compact right topological
semigroup. Observe that an idempotent in $X_{\alpha }$ is an element $\xi $
of $X_{\alpha }$ such that $\xi \left( t\right) $ is an idempotent element
of $X_{t}$ for every $t\in \mathbb{P}$. We say that an idempotent $\xi $ in $%
X_{\alpha }$ is \emph{order-preserving} if $\xi \left( t_{0}\right) \leq \xi
\left( t_{1}\right) $ whenever $t_{0}\leq t_{1}$.

Suppose now that $T$ is a rooted tree. We regard $T$ as an ordered set
endowed with the canonical rooted tree order obtained by setting $t^{\prime
}\leq t$ if and only if $t^{\prime }$ is a descendent of $t$.

\begin{definition}
\label{Definition:regrssive}A\emph{\ regressive homomorphism }of $T$ is a
function $f:T\rightarrow T$ such that $f\left( t\right) \geq t$ for every $%
t\in T$, and $f$ maps two adjacent nodes either to the same node or to
adjacent nodes.
\end{definition}

It is clear that any regressive homomorphism fixes the root, and maps every
branch to itself.

\begin{definition}
\label{Definition:rooted-action-compact}A \emph{Ramsey action} $\alpha $ of $%
T\ $on $X$ is given by an action of $T$ on $X$ in the sense of Definition %
\ref{Definition:action-compact} such that $X_{\alpha }$ is nonempty and, for
every $\tau \in \mathcal{F}_{\alpha }$, the corresponding spine $f_{\tau
}:T\rightarrow T$ is a regressive homomorphism.
\end{definition}

A similar proof as \cite[Lemma 2.1]{lupini_gowers_2016} shows the following.

\begin{proposition}
\label{Proposition:tree}Suppose that $T$ is a rooted tree of height $\leq
\omega $ with root $r$. If $\alpha $ is a Ramsey action of $T$ on $X$, then $%
X_{\alpha }$ contains an order-preserving idempotent. Furthermore, if $\xi
^{(0)}$ is an idempotent element of $X_{\alpha }$, then $X_{\alpha }$
contains an order-preserving idempotent $\xi $ such that $\xi \left(
r\right) =\xi ^{(0)}\left( r\right) $.
\end{proposition}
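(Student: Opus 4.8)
The plan is to prove the second assertion, from which the first is immediate: since $\alpha$ is a Ramsey action, $X_{\alpha}$ is nonempty, hence a nonempty compact right topological semigroup, and so by Ellis' theorem it contains an idempotent $\xi^{(0)}$ to which the second assertion applies.

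For the second assertion I would induct on the levels of $T$. Write $T=\bigcup_{n\in\omega}T_{\le n}$, where $T_{\le n}$ consists of the nodes at distance $\le n$ from $r$, and build $\xi$ as the union of an increasing chain of ``partial idempotents'' $\phi_{n}\colon T_{\le n}\to X$, maintaining at each stage: $\phi_{n}(t)$ is an idempotent of $X_{t}$ for every $t$; $\phi_{n}$ is order-preserving; $\phi_{n}(r)=\xi^{(0)}(r)$; and, crucially, $Z_{n}:=\{\eta\in X_{\alpha}:\eta\restriction T_{\le n}=\phi_{n}\}$ is nonempty. Because the values of $\phi_{n}$ are idempotent, $Z_{n}$ is automatically a closed subsemigroup of $X_{\alpha}$ (hence a compact right topological semigroup), and $\phi_{n}$ is automatically compatible with every spine $f_{\tau}$ on $T_{\le n}$. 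At the base stage one takes $\phi_{0}=\{r\mapsto\xi^{(0)}(r)\}$, so that $Z_{0}=\{\eta\in X_{\alpha}:\eta(r)=\xi^{(0)}(r)\}$ is a nonempty closed subsemigroup containing $\xi^{(0)}$. Once the chain is built, $\xi:=\bigcup_{n}\phi_{n}$ is an idempotent of $X_{\alpha}$ with $\xi(r)=\xi^{(0)}(r)$, and it is order-preserving because it is so along every parent--child edge and the order on idempotents is transitive.

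For the inductive step $n\rightsquigarrow n+1$, pick by Ellis' theorem an idempotent $\psi\in Z_{n}$. For each node $t$ at level $n+1$, with parent $s$, observe that any $\tau\in\mathcal F_{\alpha}$ with $f_{\tau}(t)\ne t$ must map $t$ to a proper ancestor of $t$ (since $f_{\tau}$ is regressive), hence into $T_{\le n}$; consequently the set $C_{t}=\{x\in X_{t}:\tau(x)=\phi_{n}(f_{\tau}(t))\text{ for all }\tau\text{ with }f_{\tau}(t)\ne t\}$ is well defined, is a closed subsemigroup of $X$ (each $\tau$ is a continuous homomorphism and each $\phi_{n}(f_{\tau}(t))$ is idempotent), and is nonempty because $\psi\in X_{\alpha}$ forces $\psi(t)\in C_{t}$. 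Using $X_{t}\le X_{s}$ and the idempotent $\phi_{n}(s)\in X_{s}$ one then runs a minimal-idempotent argument in the spirit of the proof of Lemma \ref{Lemma:minimal-lift} --- performed uniformly inside $Z_{n}$ rather than coordinate by coordinate --- to produce an idempotent $\psi'\in Z_{n}$ with $\psi'(t)\le\phi_{n}(s)$ for every $t$ at level $n+1$; one then sets $\phi_{n+1}:=\psi'\restriction T_{\le n+1}$, and $Z_{n+1}\ni\psi'$ is nonempty by construction.

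The step I expect to be the main obstacle is precisely this last construction: on the one hand the $\mathcal F_{\alpha}$-compatibility is free so long as one never leaves $X_{\alpha}$, while on the other hand the requirement ``$\psi'(t)\le\phi_{n}(s)$'' is a condition on a single coordinate that cannot be imposed simply by passing to a closed subsemigroup of $X_{\alpha}$, since left translations in $X_{\alpha}$ need not be continuous. What rescues the argument is the structure of regressive homomorphisms --- that they fix the root and carry each branch into itself --- which forces every constraint surfacing at level $n+1$ to refer only to the already frozen coordinates at levels $\le n$; this is what makes the sets $C_{t}$ (and the relevant one-sided translates of $C_{t}$ by $\phi_{n}(s)$) into closed subsemigroups, and what permits the coordinatewise minimal-idempotent construction to be upgraded to one internal to $Z_{n}$, completing the induction.
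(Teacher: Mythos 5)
Your overall architecture (a level-by-level recursion through closed subsemigroups of $X_{\alpha}$, with Ellis' theorem supplying an idempotent at each stage) is the same as the paper's, but the inductive step contains a genuine gap, and it is exactly the one you flag yourself. You need an idempotent $\psi'\in Z_{n}$ with $\psi'(t)\leq\phi_{n}(s)$ for every level-$(n+1)$ node $t$ with parent $s$. This is a \emph{two-sided} condition, $\psi'(t)+\phi_{n}(s)=\phi_{n}(s)+\psi'(t)=\psi'(t)$, and the half $\phi_{n}(s)+\psi'(t)=\psi'(t)$ is not cut out by a closed condition, since left translations need not be continuous. The repair you propose --- running the argument of Lemma \ref{Lemma:minimal-lift} ``uniformly inside $Z_{n}$'' --- does not go through as stated: the analogue of the set $A+b$ from that lemma would be $Z_{n}+\zeta$, where $\zeta$ assigns to each node $t$ the value of $\phi_{n}$ at the level-$\min(n,\mathrm{ht}(t))$ ancestor of $t$; but $\zeta$ is not an element of $X_{\alpha}$ (one has $\zeta(t)\in X_{s}$ rather than $X_{t}$, and $\zeta$ fails $\mathcal{F}_{\alpha}$-equivariance, for instance when some $\tau$ has spine $\pi_{1}$), so this translate leaves $X_{\alpha}$. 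Running the minimal-idempotent argument coordinate by coordinate in $C_{t}+\phi_{n}(s)$ instead does produce idempotents $a_{t}\leq\phi_{n}(s)$ with the right images under every $\tau$, but then nothing guarantees that the partial function $\phi_{n}\cup(a_{t})_{t}$ extends to a full element of $X_{\alpha}$, i.e.\ that your $Z_{n+1}$ is nonempty --- and that nonemptiness is the entire content of the step. Your closing appeal to the structure of regressive homomorphisms names the right ingredient but supplies no mechanism.

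The paper's proof shows what the mechanism has to be, and it differs from your plan in two essential respects. First, during the recursion it never imposes the full order relation: its subsemigroup $Z_{k}$ is cut out by the \emph{one-sided} absorption conditions $\eta(t)+\xi^{(k)}(t_{0})=\eta(t)$ for all $t\leq t_{0}\in T_{k}$ (for every $t$, not only those at level $k+1$); these involve only right translations by fixed elements, so $Z_{k}$ really is a closed subsemigroup of $X_{\alpha}$. Second, the nonemptiness of $Z_{k}$ is witnessed not by a minimal-idempotent argument but by an explicit element: for $t$ of height $n$, the telescoping sum $\xi^{(k)}(t)+\xi^{(k)}(\pi_{1}(t))+\cdots+\xi^{(k)}(\pi_{n}(t))$ along the path from $t$ to the root, and verifying that this tuple lies in $X_{\alpha}$ is precisely where the hypothesis that the spines are regressive homomorphisms is used. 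The two-sided order relation is recovered only at the very end, by a second telescoping sum taken in the opposite order. To complete your version you would need to import both devices, at which point you would have reconstructed the paper's argument.
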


\begin{proof}
Fix an idempotent element $\xi ^{(0)}$ of $X_{\alpha }$. Let, for $k\in
\omega $, $\pi _{k}:T\rightarrow T$ be the function that maps every node to
its $k$-th predecessor, where we convene that the $k$-th predecessor of a
node of height at most $k$ is the root, and the $0$-th predecessor of every
node is itself. Let $T_{k}$ be the set of nodes in $T$ of height at most $k$%
. We define by recursion on $k\in \omega $ idempotent elements $\xi ^{(k)}$
of $X_{\alpha }$ such that $\xi ^{(k)}\left( t\right) +\xi ^{(k)}\left(
t_{0}\right) =\xi ^{(k)}\left( t\right) $ whenever $t_{0}\in T_{k-1}$, and $%
t\in T$ are such that $t\leq t_{0}$, and $\xi ^{(k)}\left( t\right) =\xi
^{(j)}\left( t\right) $ whenever $j\leq k$ and $t\in T_{j}$. Granted the
construction one can then consider $\xi \in X_{\alpha }$ defined by%
\begin{equation*}
\xi \left( t\right) :=(\xi ^{(n)}\circ \pi _{n}+\xi ^{(n)}\circ \pi
_{n-1}+\cdots +\xi ^{(n)}\circ \pi _{0})\left( t\right)
\end{equation*}%
for any node $t\in T$ of height $n$. It is not difficult to verify that $\xi
\in X_{\alpha }$ is an order-preserving idempotent such that $\xi \left(
t\right) =\xi ^{(0)}\left( r\right) $.

We proceed now with the recursive construction. We have already defined $\xi
^{(0)}$. Suppose that $\xi ^{(0)},\ldots ,\xi ^{(k)}$ have been defined for
some $k\in \omega $. Consider the closed subsemigroup $Z_{k}$ of $\xi \in
X_{\alpha }$ such that $\xi \left( t_{0}\right) =\xi ^{(k)}\left(
t_{0}\right) $ for $t\in T_{k}$, and $\xi \left( t\right) +\xi \left(
t_{0}\right) =\xi \left( t\right) $ for every $t\in T$ and $t_{0}\in T_{k}$
such that $t\leq t_{0}$. Observe that $Z_{k}$ is nonempty. Indeed, set%
\begin{equation*}
\xi \left( t\right) :=(\xi ^{(k)}\circ \pi _{0}+\xi ^{(k)}\circ \pi
_{1}+\cdots +\xi ^{(k)}\circ \pi _{n})\left( t\right)
\end{equation*}%
for any node $t\in T$ of height $n$. Observe that $\tau \circ \xi =\xi \circ
f_{\tau }$ for every $\tau \in \mathcal{F}_{\alpha }$ since $\tau $ is a
homomorphism, $f_{\tau }$ is an regressive homomorphism of $T$, and $\tau
\circ \xi ^{(k)}=\xi ^{(k)}\circ f_{\tau }$ by recursive assumption.
Therefore $\xi \in X_{\alpha }$. Furthermore if $t_{0}\in T_{k}$, then $\xi
^{(k)}\left( t\right) +\xi ^{(k)}\left( \pi ^{(j)}\left( t\right) \right)
=\xi ^{(k)}\left( t\right) $ for every $j\in \omega $ and hence $\xi \left(
t\right) =\xi ^{(k)}\left( t\right) $. Finally suppose that $t_{0}\in T_{k}$
and $t\in T$ are such that $t\leq t_{0}$.\ We want to prove that $\xi \left(
t\right) +\xi \left( t_{0}\right) =\xi \left( t\right) $. Suppose that the
height of $t$ is $m$. If $m\leq k$ then we have that%
\begin{equation*}
\xi \left( t\right) +\xi \left( t_{0}\right) =\xi ^{(k)}\left( t\right) +\xi
^{(k)}\left( t_{0}\right) =\xi ^{(k)}\left( t\right) =\xi \left( t\right)
\end{equation*}%
by the recursive assumption. Suppose now that $m>k$. Then we have that, by
the recursive assumption,%
\begin{eqnarray*}
\xi \left( t\right) +\xi \left( t_{0}\right) &=&\xi ^{(k)}\left( t\right)
+\cdots +\xi ^{(k)}\left( \pi _{m-k}\left( t\right) \right) +\xi
^{(k)}\left( t_{0}\right) \\
&=&\xi ^{(k)}\left( t\right) +\cdots +\xi ^{(k)}\left( \pi _{m-k}\left(
t\right) \right) \\
&=&\xi \left( t\right) \text{.}
\end{eqnarray*}%
This concludes the proof that $\xi \in Z_{k}$. Since $Z_{k}$ is nonempty, it
contains an idempotent element $\xi ^{(k+1)}\in Z_{k}$. This concludes the
recursive construction.
\end{proof}

The following definition is inspired by the definition of layered action
from \cite[Definition 3.3]{farah_partition_2002}.

\begin{definition}
\label{Definition:layered}An action $\alpha $ of $T\ $on $X$ is a \emph{%
layered action} if for every $\tau \in \mathcal{F}_{\alpha }$ and $t\in T$
one has that

\begin{enumerate}
\item $f_{\tau }$ is equal to either $t$ or the immediate predecessor of $t$;

\item if $t$ has an immediate predecessor $t^{-}$, then for any minimal
idempotent $p\in X_{t^{-}}$ there exists $q\in X_{t}$ such that $\sigma
\left( q\right) =p$ for any $\sigma \in \mathcal{F}_{\alpha }$ such that $%
f_{\sigma }\left( t\right) =t^{-}$.
\end{enumerate}
\end{definition}

A similar proof as \cite[Theorem 3.8]{farah_partition_2002} gives the
following.

\begin{proposition}
\label{Proposition:layered}Suppose that $T$ is a rooted tree of height $\leq
\omega $, and $\alpha $ is a layered action of $T$. Then $\alpha $ is a
Ramsey action. Furthermore, $X_{\alpha }$ contains an order-preserving
idempotent $\xi $ such that $\xi \left( t\right) $ is a minimal idempotent
in $X_{t}$ for every $t\in T$.
\end{proposition}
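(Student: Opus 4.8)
The plan is to verify first that a layered action is a Ramsey action, and then to upgrade the order-preserving idempotent produced by Proposition~\ref{Proposition:tree} to one whose value at each node is a \emph{minimal} idempotent of the corresponding closed subsemigroup. For the first part, note that in a layered action the spine $f_\tau$ of any $\tau\in\mathcal F_\alpha$ sends each node $t$ either to $t$ itself or to its immediate predecessor $t^-$; in particular $f_\tau(t)\geq t$ always, and $f_\tau$ sends adjacent nodes to nodes that are either equal or adjacent (one checks the few cases: if $t,s$ are adjacent with $s=t^-$, then $\{f_\tau(t),f_\tau(s)\}\subseteq\{t,t^-,t^{--}\}$ and consecutive, hence equal or adjacent). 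Thus $f_\tau$ is a regressive homomorphism of $T$ in the sense of Definition~\ref{Definition:regrssive}, so $\alpha$ is a Ramsey action provided $X_\alpha\neq\varnothing$; the latter will follow once we exhibit the idempotent below, or can be taken as part of the definition of layered action being nontrivial.

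For the second part, the strategy mirrors the recursion in the proof of Proposition~\ref{Proposition:tree}, but at each level we arrange minimality. First, by Ellis's theorem pick a minimal idempotent $p_r\in X_r$ at the root $r$. Now build, by recursion on the height $k$, idempotents $\xi^{(k)}\in X_\alpha$ with $\xi^{(k)}(t)$ a \emph{minimal} idempotent of $X_t$ for every $t\in T_k$, compatibly with the previous stages (i.e.\ $\xi^{(k)}\restriction T_j=\xi^{(j)}\restriction T_j$ for $j\le k$) and with $\xi^{(k)}(t)+\xi^{(k)}(t_0)=\xi^{(k)}(t)$ whenever $t\leq t_0\in T_{k-1}$. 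The passage from level $k$ to level $k+1$ is where the layered hypothesis is used: for each node $t$ of height $k+1$ with immediate predecessor $t^-\in T_k$, the minimal idempotent $\xi^{(k)}(t^-)\in X_{t^-}$ lifts, by clause~(2) of Definition~\ref{Definition:layered}, to an element $q_t\in X_t$ with $\sigma(q_t)=\xi^{(k)}(t^-)$ for all $\sigma\in\mathcal F_\alpha$ with $f_\sigma(t)=t^-$; inside the closed subsemigroup of such lifts one then finds, again via Ellis, a minimal idempotent of $X_t$ mapping down to $\xi^{(k)}(t^-)$. One must check this can be done coherently across all nodes of height $k+1$ so that the resulting function lies in $X_\alpha$ (this uses that $\mathcal F_\alpha$ is closed under composition and that the spines are regressive, exactly as in Proposition~\ref{Proposition:tree}), and that Lemma~\ref{Lemma:minimal-lift} lets one simultaneously keep the order relation $\xi^{(k+1)}(t)+\xi^{(k+1)}(t_0)=\xi^{(k+1)}(t)$ down the branch while preserving minimality.

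Having the coherent sequence $(\xi^{(k)})_{k\in\omega}$, define $\xi\in X_\alpha$ by the telescoping formula
\begin{equation*}
\xi(t):=\bigl(\xi^{(n)}\circ\pi_n+\xi^{(n)}\circ\pi_{n-1}+\cdots+\xi^{(n)}\circ\pi_0\bigr)(t)
\end{equation*}
for $t$ of height $n$, exactly as in Proposition~\ref{Proposition:tree}. Since each summand $\xi^{(n)}(\pi_j(t))$ with $j\geq 1$ lies above $\xi^{(n)}(t)=\xi^{(n)}\circ\pi_0(t)$ in the idempotent order of $X_t$ (by the compatibility relations built into the recursion), the sum collapses and $\xi(t)=\xi^{(n)}(t)$, which is a minimal idempotent of $X_t$; order-preservation of $\xi$ follows as before. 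The main obstacle is the inductive step: one must show that the minimal-idempotent lift granted by layeredness at a single node can be chosen so that, assembled over all nodes at a given height, it yields a genuine element of $X_\alpha$ (equivariant under every $\tau\in\mathcal F_\alpha$), while simultaneously respecting the down-the-branch absorption; reconciling "minimal" with "equivariant" is precisely the content of combining clause~(2) of the layered hypothesis with Lemma~\ref{Lemma:minimal-lift}, and is where the argument of \cite[Theorem~3.8]{farah_partition_2002} must be adapted to the tree setting.
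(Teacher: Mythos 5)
Your first part (each spine of a layered action is a regressive homomorphism) matches the paper, which also treats this as immediate from clause~(1) of Definition~\ref{Definition:layered}. The problem is in the second part: you correctly locate the crux --- producing, at a node $t$ of height $k+1$, an idempotent of $X_t$ that is simultaneously \emph{equivariant} (mapped to $x_{t^-}$ by every $\sigma$ with $f_\sigma(t)=t^-$), \emph{below} $x_{t^-}$, and \emph{minimal in all of $X_t$} --- but you do not prove it. Your appeal to ``Ellis'' only yields an idempotent minimal \emph{within} the closed subsemigroup $Y$ of equivariant lifts, not minimal in $X_t$; and Lemma~\ref{Lemma:minimal-lift} produces minimal idempotents below a given one with no control whatsoever on equivariance under $\mathcal F_\alpha$, so it cannot ``reconcile minimal with equivariant'' as you assert. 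Your own closing sentence concedes that this reconciliation ``is where the argument of Farah--Hindman--McLeod must be adapted,'' i.e.\ the step is deferred rather than carried out. That step is the entire content of the proposition.

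For the record, the paper closes this gap as follows. Set $Y=\{z\in X_t:\sigma(z)=x_{t^-}\text{ for all }\sigma\text{ with }f_\sigma(t)=t^-\}$, nonempty by clause~(2) of the layered hypothesis; check $Y+x_{t^-}\subseteq Y$; take $x_t$ a minimal idempotent of the closed subsemigroup $Y+x_{t^-}$; deduce $x_{t^-}+x_t=x_t=x_t+x_{t^-}$ (so $x_t\leq x_{t^-}$) and that $x_t$ is minimal in $Y$; and then --- the decisive point --- use minimality of $x_{t^-}$ in $X_{t^-}$: any idempotent $z\leq x_t$ of $X_t$ has $\tau(z)\leq x_{t^-}$, hence $\tau(z)=x_{t^-}$, hence $z\in Y$, hence $z=x_t$. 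Note also that once the $x_t$ are built node by node, the function $\xi:t\mapsto x_t$ is already the desired order-preserving idempotent of $X_\alpha$; the telescoping sum you import from Proposition~\ref{Proposition:tree} collapses to $\xi^{(n)}(t)$ by your own observation, so that detour (and the attendant difficulty of defining each $\xi^{(k)}$ on all of $T$ rather than just on $T_k$) buys nothing.
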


\begin{proof}
It is clear by definition of layered action that, for every $\tau \in 
\mathcal{F}_{\alpha }$, $f_{\tau }$ is a regressive homomorphism. We now
prove the second assertion. This will also show that $\alpha $ is a Ramsey
action.

We define minimal idempotents $x_{t}\in X_{t}$ by recursion on the height of 
$t$ such that, for every $t,t^{\prime }\in T$ and $\tau \in \mathcal{F}%
_{\alpha }$, $\tau \left( x_{t}\right) =x_{f_{\tau }\left( t\right) }$ and $%
x_{t^{\prime }}\leq x_{t}$ if $t^{\prime }\leq t$. If $t_{0}$ is the root of 
$T$ then we let $x_{t_{0}}$ be any minimal idempotent element of $X_{t_{0}}$%
. Suppose that $x_{t}$ has been defined whenever the height of $t$ is at
most $k$. Suppose now that $t$ has height $k+1$ and let $t^{-}$ be the
immediate predecessor of $t$.

If, for each $\tau \in \mathcal{F}_{\alpha }$, $f_{\tau }\left( t\right) =t$%
, one can just define $x_{t}\in X_{t}$ using Lemma \ref{Lemma:minimal-lift}.
Suppose now that $f_{\tau }\left( t\right) =t^{-}$ for some $\tau \in 
\mathcal{F}_{\alpha }$. Let $Y$ be the set of $z\in X_{t}$ such that $\tau
\left( z\right) =x_{t^{-}}$ for every $\tau \in \mathcal{F}_{\alpha }$ such
that $f_{\tau }\left( t\right) =t^{-}$. By hypothesis we have that $Y$ is
nonempty. Observe now that $Y+x_{t^{-}}\subset Y$. Indeed we have that for $%
y\in Y$, $\tau \left( y+x_{t^{-}}\right) =x_{t^{-}}+x_{f_{\tau }\left(
t^{-}\right) }=x_{t^{-}}$ by recursive hypothesis. Pick now a minimal
idempotent $x_{t}$ of $Y+x_{t^{-}}$. Observe that $x_{t^{-}}+x_{t}\in
Y+x_{t^{-}}$ is an idempotent such that $x_{t^{-}}+x_{t}\leq x_{t}$. By
minimality, $x_{t^{-}}+x_{t}=x_{t}=x_{t}+x_{t^{-}}$ and hence $x_{t}\leq
x_{t^{-}}$. Observe that if $z\in Y$ is an idempotent element such that $%
z\leq x_{t}$ then $z\in Y+x_{t^{-}}$. This shows that $x_{t}$ is minimal in $%
Y$. Finally suppose that $z\in X_{t}$ is an idempotent element such that $%
z\leq x_{t}$. Then we have that, for any $\tau \in \mathcal{F}_{\alpha }$, $%
\tau \left( z\right) $ is an idempotent element of $X_{t^{-}}$ such that $%
\tau \left( z\right) \leq x_{t^{-}}$. It follows by minimality of $x_{t^{-}}$
that $\tau \left( z\right) =x_{t^{-}}$, and hence $z\in Y$. Minimality of $%
x_{t}$ in $Y$ now shows that $z=x_{t}$. This concludes the proof that $x_{t}$
is minimal in $X_{t}$.
\end{proof}

\section{Actions of trees on partial semigroups\label{Section:action-partial}%
}

\subsection{Partial semigroups\label{Subsection:partial-semigroup}}

A \emph{partial semigroup }\cite[Definition 1.2]{farah_partition_2002}---see
also \cite[Section 2]{bergelson_partition_1994} and \cite[Section 2.2]%
{todorcevic_introduction_2010}---is a set $S$ endowed with a partially
defined binary operation $\left( x,y\right) \mapsto x+y$, with the property
that $\left( x+y\right) +z=x+\left( y+z\right) $ for $x,y,z\in S$. Such an
equality should be interpreted as asserting that the left hand side is
defined if and only if the right hand side is defined, and in such a case
they are equal. A partial semigroup is \emph{adequate }\cite[Definition 2.1]%
{farah_partition_2002}---or \emph{directed }\cite[\S 2.2]%
{todorcevic_introduction_2010}---if for every finite subset $A$ of $S$ the
set $\varphi _{S}(A)$ of elements $x$ of $S$ such that $a+x$ is defined for
every $a\in A$ is nonempty.

A \emph{partial semigroup homomorphism} \cite[Definition 2.8]%
{farah_partition_2002} between partial semigroups $S$ and $T$ is a function $%
f:S\rightarrow T$ with the property that $f\left( x+y\right) =f\left(
x\right) +f\left( y\right) $ for $x,y\in S$. Again, such an equality should
be interpreted as asserting that the left hand side is defined if and only
if the right hand side is defined, and in such a case they are equal. A
partial semigroup homomorphism is\emph{\ adequate }if for every finite
subset $B$ of $T$ there exists a finite subset $A$ of $S$ such that the
image of $\varphi _{S}(A)$ under $f$ is contained in $\varphi _{T}\left(
B\right) $. If $S\subset T$ then we say that $S$ is an\emph{\ adequate
partial subsemigroup} of $T$ is the inclusion map is an adequate partial
semigroup homomorphism \cite[Definition 2.10]{farah_partition_2002}. We say
furthermore that $S$ is an\emph{\ adequate bilateral ideal} if it is an
adequate partial subsemigroup, and, for every $x\in S$ and $y\in T$, $%
x+y,y+x $ belong to $S$ whenever they are defined.

If $S_{0},S_{1}$ adequate partial subsemigroups of $S$, then we let $%
S_{0}\leq S_{1}$ if $\left( S_{0}+S_{1}\right) \cup \left(
S_{1}+S_{0}\right) \subset S_{0}$. This should be interpreted as the
assertion that, for any $s_{0}\in S_{0}$ and $s_{1}\in S_{1}$, $s_{0}+s_{1}$
and $s_{1}+s_{0}$ belong to $S_{0}$ whenever they are defined. Observe that $%
S_{0}\leq S$ if and only if $S_{0}$ is an adequate bilateral ideal of $S$.
We denote by $\mathcal{S}\left( S\right) $ the space of adequate partial
subsemigroups of $S$. We regard $\mathcal{S}\left( S\right) $ as an ordered
set with respect to the ordering just defined.

\subsection{Cofinal ultrafilters on partial semigroups\label%
{Subsection:cofinal-ultrafilter}}

Suppose that $S$ is a partial semigroup. An ultrafilter $\mathcal{U}$ over $%
S $ is \emph{cofinal }if $\forall x\in S$, $\mathcal{U}y$, $x+y$ is defined.
Following \cite[Chapter 2]{todorcevic_introduction_2010}, we denote by $%
\gamma S$ the space of cofinal ultrafilters over $S$. It is clear that $%
\gamma S\ $is a closed subspace of the space of ultrafilters over $S$.
Furthermore, $\gamma S$ is a compact right topological semigroup when
endowed with the operation defined by setting $A\in \mathcal{U}+\mathcal{V}$
if and only if $\mathcal{U}x$, $\mathcal{V}y$, $x+y\in A$; see \cite[%
Corollary 2.7]{todorcevic_introduction_2010} and \cite[Theorem 2.6]%
{farah_partition_2002}. More generally, this expression defines a function $%
\beta S\times \gamma S\rightarrow \beta S$, $\left( \mathcal{U},\mathcal{V}%
\right) \rightarrow \mathcal{U}+\mathcal{V}$ such that, for any $\mathcal{V}%
\in \gamma S$, the function $\beta S\rightarrow \beta S$, $\mathcal{U}%
\mapsto \mathcal{U}+\mathcal{V}$ is continuous. In particular, for any $s\in
S$ and $\mathcal{U}\in \gamma S$, the element $s+\mathcal{U}$ of $\beta S$
is well defined.

Suppose that $S_{0}$ and $S_{1}$ are partial semigroups, and $\sigma
:S_{0}\rightarrow S_{1}$ is an adequate partial semigroup homomorphism. Then 
$\sigma $ induces a continuous semigroup homomorphism $\sigma :\gamma
S_{0}\rightarrow \gamma S_{1}$ by setting $A\in \sigma \left( \mathcal{U}%
\right) $ if and only if $\mathcal{U}x$, $\sigma \left( x\right) \in A$ \cite%
[Lemma 2.14]{farah_partition_2002}. When $S_{0}$ is an adequate subsemigroup
of $S_{1}$ and $\sigma :S_{0}\rightarrow S_{1}$ is the inclusion map, the
continuous extension $\sigma :\gamma S_{0}\rightarrow \gamma S_{1}$ is
one-to-one. In this situation, we will identify $\gamma S_{0}$ with its
image under $\sigma $, which is the closed subsemigroup of $\gamma S_{1}$
consisting of the cofinite ultrafilters on $S_{1}$ that contain $S_{0}$.
This defines a map $\mathcal{S}\left( S\right) \rightarrow \mathcal{S}\left(
\gamma S\right) $, $S_{0}\mapsto \gamma S_{0}$. Here $\mathcal{S}\left(
\gamma S\right) $ denotes as in Subsection \ref{Subsection:compact-semigroup}
the space of closed subsemigroups of $\gamma S$. It is not hard to see that
such a map is order-preserving with respect to the ordering on $\mathcal{S}%
\left( S\right) $ and $\mathcal{S}\left( \gamma S\right) $ defined above.

\subsection{Actions of ordered sets on partial semigroups\label%
{Subsection:action-partial-semigroup}}

Suppose that $\mathbb{P}$ is an ordered set, and $S$ is an adequate partial
semigroup. We denote by $\mathrm{End}\left( S\right) $ the space of \emph{%
adequate }partial semigroup homomorphisms $\tau :S\rightarrow S$. Observe
that $\mathrm{\mathrm{En}d}\left( S\right) $ is a semigroup with respect to
composition.

\begin{definition}
\label{Definition:action-partial}An \emph{action }$\alpha $ of $\mathbb{P}$
on $S$ is given by

\begin{itemize}
\item an order-preserving function $\mathbb{P}\rightarrow \mathcal{S}\left(
S\right) $, $t\mapsto S_{t}$, and

\item a subsemigroup $\mathcal{F}_{\alpha }\subset \mathrm{End}\left(
S\right) $,
\end{itemize}

such that such that for every $\tau \in \mathcal{F}_{\alpha }$ there exists
a function $f_{\tau }:\mathbb{P}\rightarrow \mathbb{P}$---which we call the 
\emph{spine} of $\tau $---such that $\tau $ maps $S_{t}$ to $S_{f_{\tau
}\left( t\right) }$ for every $t\in \mathbb{P}$, and such that $\tau \left(
s\right) =s$ for any $s\in S_{t}$ and $t\in T$ such that $f_{\tau }\left(
t\right) =t$.
\end{definition}

Suppose that $\alpha $ is an action of $\mathbb{P}$ on $S$. Then $\alpha $
induces an action in the sense of Definition \ref{Definition:action-compact}
of $\mathbb{P}$ on the compact right topological semigroup $X=\gamma S$,
which we still denote by $\alpha $. This is obtained by setting $%
X_{t}:=\gamma S_{t}$ for $t\in T$ and considering the semigroup of
continuous semigroup homomorphisms $\tau :\gamma S\rightarrow \gamma S$
obtained as the canonical continuous extensions of elements $\tau $ of $%
\mathcal{F}_{\alpha }$. Consistently with the notation introduced in
Subsection \ref{Subsection:action-compact-semigroup}, we denote by $(\gamma
S)_{\alpha }$ the set of functions $\xi :\mathbb{P}\rightarrow \gamma S$
such that $\xi \left( t\right) \in \gamma S_{t}$ and $\tau \circ \xi =\xi
\circ f_{\tau }$ for every $t\in \mathbb{P}$. An order-preserving idempotent
in $\left( \gamma S\right) _{\alpha }$ is an element $\xi $ of $\left(
\gamma S\right) _{\alpha }$ such that $\xi \left( t\right) $ is an
idempotent element in $\gamma S_{t}$ and $\xi \left( t\right) \leq \xi
\left( t_{0}\right) $ whenever $t,t_{0}\in \mathbb{P}$ are $t\leq t_{0}$.

\begin{theorem}
\label{Theorem:monochromatic-action}Suppose that $\alpha $ is an action of a
finite ordered set $\mathbb{P}$ on the adequate partial semigroup $S$.
Suppose that $\xi \in \left( \gamma S\right) _{\alpha }$ is an
order-preserving idempotent. Fix a finite coloring $c$ of $S$ and consider
its canonical extension to a finite coloring $c$ of $\beta S$. Fix a
sequence $(\psi _{n}^{(\mathcal{F})})$ of functions $\psi _{n}^{(\mathcal{F}%
)}:(S^{\mathbb{P}})^{n}\rightarrow \left[ \mathcal{F}_{\alpha }\right]
^{<\aleph _{0}}$ and a sequence $(\psi _{n}^{(S)})$ of functions $\psi
_{n}^{(S)}:(S^{\mathbb{P}})^{n}\rightarrow \left[ S\right] ^{<\aleph _{0}}$
such that $\psi _{n}\left( x_{0},\ldots ,x_{n-1}\right) $ contains the range
of $\tau _{i}\circ x_{i}$ for every $i\in n$ and $\tau _{i}\in \psi _{i}^{(%
\mathcal{F})}\left( x_{0},\ldots ,x_{i-1}\right) $. There exists a sequence $%
\left( x_{n}\right) $ of functions $x_{n}:\mathbb{P}\rightarrow S$ such that

\begin{itemize}
\item $x_{n}\left( t\right) \in S_{t}\cap (\varphi _{S}\circ \psi
_{n}^{(S)})\left( x_{0},\ldots ,x_{n-1}\right) $ for every $n\in \omega $
and $t\in \mathbb{P}$; and

\item for any $\ell \in \omega $, $n_{0}<n<\cdots <n_{\ell }\in \omega $, $%
t_{i}\in \mathbb{P}$ for $i\leq \ell $, and $\tau _{i}\in \psi _{n_{i}}^{(%
\mathcal{F})}\left( x_{0},\ldots ,x_{n_{i}-1}\right) $ for $i\leq \ell $, if 
$\left\{ f_{\tau _{i}}\left( t_{i}\right) :i\leq \ell \right\} $ is a chain
in $\mathbb{P}$ with least element $t$, then the color of $\tau _{0}\left(
x_{n_{0}}\left( t_{0}\right) \right) +\cdots +\tau _{\ell }\left( x_{n_{\ell
}}\left( t_{\ell }\right) \right) $ is equal to the color of $\xi \left(
t\right) $.
\end{itemize}
\end{theorem}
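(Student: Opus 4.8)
The plan is to run a Galvin--Glazer--Hindman type recursion inside the compact right topological semigroup $\gamma S$, steered by the idempotent ultrafilters $\xi(t)$. For $t\in\mathbb{P}$ put $i_t:=c(\xi(t))$ and $B_t:=\{s\in S:c(s)=i_t\}$; by definition of the canonical extension of $c$, and since $\xi(t)\in\gamma S_t$, we have $B_t\in\xi(t)$ and $S_t\in\xi(t)$. For $A\subseteq S$ and $s\in S$ write $-s+A:=\{x\in S:s+x\in A\}$, as usual. Since $\xi$ is an order-preserving idempotent of $(\gamma S)_\alpha$ we have the absorption relations $\xi(u)+\xi(v)=\xi(u)$ whenever $u\le v$ in $\mathbb{P}$; consequently any sum of the ultrafilters $\xi(c)$ ($c$ in a chain $C$ of $\mathbb{P}$), taken in any order, collapses to $\xi(\min C)$. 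Also $\tau\circ\xi=\xi\circ f_\tau$ gives $\tau(\xi(t))=\xi(f_\tau(t))$, so $\tau^{-1}(D)\in\xi(t)$ for every $\tau\in\mathcal{F}_\alpha$ and every $D\in\xi(f_\tau(t))$. With these observations, the theorem reduces to constructing a sequence $(x_n)$ satisfying the first bullet and such that every sum $W=\tau_0(x_{n_0}(t_0))+\cdots+\tau_\ell(x_{n_\ell}(t_\ell))$ occurring in the second bullet lies in $B_t$, where $t=\min\{f_{\tau_i}(t_i):i\le\ell\}$.

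The combinatorial core is a family of ``starred'' sets recording how the colour of a partial sum changes when a new summand is appended whose node lies above or below the current minimum. Fix the (finitely many) maximal chains $M$ of $\mathbb{P}$. For each such $M$ I would define, by recursion on $m\in M$ from the bottom of $M$ upward, $B_m^{\star,M}:=\{s\in B_m:-s+B_m\in\xi(m')\text{ for all }m'\in M\text{ with }m'\ge m\}\cap\{s\in S:-s+B_{m'}^{\star,M}\in\xi(m')\text{ for all }m'\in M\text{ with }m'<m\}$. Using the absorption relations one checks $B_m^{\star,M}\in\xi(m)$, and a routine manipulation---the same as in the classical passage from $A\in p$ to a ``set of good shifts'' for an idempotent $p$, but now exploiting that $M$ is a chain so that $\xi(m')+\xi(m'')\in\{\xi(m'),\xi(m'')\}$ for all $m',m''\in M$---shows that $B_m^{\star,M}$ enjoys: (a) $-s+B_m^{\star,M}\in\xi(m')$ for all $s\in B_m^{\star,M}$ and all $m'\in M$ with $m'\ge m$; and (b) $-s+B_{m'}^{\star,M}\in\xi(m')$ for all $s\in B_m^{\star,M}$ and all $m'\in M$ with $m'<m$. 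The localisation to a single maximal chain is what makes (a) close up; (b) transfers colours downwards. Intuitively, (a) says that appending a summand with node above $m$ leaves the sum in $B_m^{\star,M}$, hence of colour $i_m$, while (b) says that appending one with node $m'<m$ moves it into $B_{m'}^{\star,M}$, hence to colour $i_{m'}$.

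Now construct $x_0,x_1,\dots$ recursively, maintaining the invariant: for all $j\in\omega$, all $n_0<\cdots<n_j<n$, all $t_i\in\mathbb{P}$, and all $\tau_i\in\psi_{n_i}^{(\mathcal{F})}(x_0,\dots,x_{n_i-1})$ such that $C:=\{f_{\tau_i}(t_i):i\le j\}$ is a chain, the partial sum $w:=\tau_0(x_{n_0}(t_0))+\cdots+\tau_j(x_{n_j}(t_j))$ is defined and belongs to $\bigcap\{B_{\min C}^{\star,M}:M\supseteq C\}$ (intersection over maximal chains $M$ containing $C$). Only finitely many such $w$ arise once $x_0,\dots,x_{n-1}$ are fixed. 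For each $t\in\mathbb{P}$ I would choose $x_n(t)$ in the intersection of: (i) $S_t$; (ii) $\varphi_S(\psi_n^{(S)}(x_0,\dots,x_{n-1}))$, which lies in $\xi(t)$ since $\xi(t)$ is cofinal and $S$ is adequate; and (iii) for every $\tau\in\psi_n^{(\mathcal{F})}(x_0,\dots,x_{n-1})$ and every partial sum $w$ as in the invariant---also allowing $w$ empty, i.e.\ $C=\varnothing$---with $C\cup\{f_\tau(t)\}$ a chain, the set $\tau^{-1}(D)$, where, setting $v:=f_\tau(t)$ and $u:=\min(C\cup\{v\})$, we put $D:=\bigcap\{B_u^{\star,M}:M\supseteq C\cup\{v\}\}$ if $w$ is empty and $D:=-w+\bigcap\{B_u^{\star,M}:M\supseteq C\cup\{v\}\}$ otherwise. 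Using (a) when $v\ge\min C$ and (b) when $v<\min C$ one gets $D\in\xi(v)$, hence $\tau^{-1}(D)\in\xi(t)$. As only finitely many sets are intersected and all lie in $\xi(t)$, their intersection is in $\xi(t)$, so nonempty; let $x_n(t)$ be any of its elements. Since membership of $y$ in $-w+B_u^{\star,M}$ forces $w+y$ to be defined and to lie in $B_u^{\star,M}$, each new partial sum $w+\tau(x_n(t))$ is automatically defined and again satisfies the invariant, and the recursion goes through.

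Granting the recursion, the first bullet of the conclusion is clauses (i)--(ii) of the choice of $x_n(t)$. For the second bullet, the sum $W$ of any configuration as in the statement is one of the tracked partial sums---assembled over the stages $n_0<\cdots<n_\ell$---with node-chain $\{f_{\tau_i}(t_i):i\le\ell\}$ and minimum $t$, so $W\in B_t^{\star,M}\subseteq B_t$ for any maximal chain $M$ extending that node-chain; thus $c(W)=i_t=c(\xi(t))$, as required. I expect the main obstacle to be the construction and verification of the starred sets $B_m^{\star,M}$---in particular pinning down property (a), where the restriction to a single maximal chain is essential---together with the check that the set from which $x_n(t)$ is drawn really belongs to $\xi(t)$. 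The dichotomy ``appended node above versus below the current minimum'' and the bookkeeping over maximal chains are where the real work lies; the definedness of the sums and the $\varphi_S$-bookkeeping are routine consequences of adequacy of $S$ and of the homomorphisms in $\mathcal{F}_\alpha$.
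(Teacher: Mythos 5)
Your proof is correct, and at heart it is the same Galvin--Glazer-type recursion driven by the order-preserving idempotent $\xi$ that the paper uses, but the decomposition is genuinely different. The paper carries the absorption information \emph{dynamically}: alongside the target invariant ($1_{m}$) about colors of partial sums, it maintains an auxiliary invariant ($2_{m}$) about sums with one trailing ultrafilter term $\xi \left( t_{\ell +1}\right) $ appended, derives from it and the order-preservation of $\xi $ a statement ($3_{m}$) with two trailing terms, and then uses ultrafilter quantifiers to choose $x_{m+1}\left( t\right) $. You front-load that work into a static lemma: the sets $B_{m}^{\star ,M}$ indexed by the maximal chains $M$ of $\mathbb{P}$, with properties (a) and (b) proved once before the recursion starts. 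Your (a) and (b) are precisely the content of the paper's ($2_{m}$)/($3_{m}$) step internalized --- the self-referential clause $-s+B_{m^{\prime }}^{\star ,M}\in \xi \left( m^{\prime }\right) $ plays the role of the second trailing ultrafilter --- and your observation that comparability of $m^{\prime }$ and $m^{\prime \prime }$ inside a single maximal chain is what makes (a) close up correctly isolates the one nontrivial point, namely where order-preservation of $\xi $ enters; I checked that both (a) and (b) do go through with the recursive definition you give. What your packaging buys is a cleaner recursion (each $x_{n}\left( t\right) $ is just a point of a finite intersection of members of $\xi \left( t\right) $) at the cost of the extra bookkeeping over maximal chains; the paper avoids the star sets but must restate and reverify the trailing-term invariants at every stage. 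Definedness of the sums is handled identically in both arguments, via the partial-semigroup convention for $-w+A$ and cofinality of the $\xi \left( t\right) $.
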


\begin{proof}
We now define by recursion on $m\in \omega $ functions $x_{m}:\mathbb{P}%
\rightarrow S$ such that $x_{m}\left( t\right) \in S_{t}\cap (\varphi
_{S}\circ \psi _{m}^{(S)})\left( x_{0},\ldots ,x_{m-1}\right) $ such that
for every $m\in \omega $ the following holds:

\begin{enumerate}
\item[($1_{m}$)] for every $\ell \leq m$, $n_{0}<n_{1}<\cdots <n_{\ell }\leq
m$, $t_{i}\in \mathbb{P}$ for $i\leq \ell $, $\tau _{i}\in \psi _{i}^{(%
\mathcal{F})}\left( x_{0},\ldots ,x_{n_{i}-1}\right) $ for $i\leq \ell $
such that $\left\{ f_{\tau _{i}}\left( t_{i}\right) :i\leq \ell \right\} $
is a chain in $\mathbb{P}$ with least element $t$, one has that the color of 
$\tau _{0}\left( x_{n_{0}}\left( t_{0}\right) \right) +\cdots +\tau _{\ell
}\left( x_{n_{\ell }}\left( t_{\ell }\right) \right) $ is equal to the color
of $\xi \left( t\right) $, and

\item[($2_{m}$)] for every $\ell \leq m$, $n_{0}<n_{1}<\cdots <n_{\ell }\leq
m$, $t_{i}\in \mathbb{P}$ for $i\leq \ell +1$, $\tau _{i}\in \psi _{n_{i}}^{(%
\mathcal{F})}\left( x_{0},\ldots ,x_{n_{i}-1}\right) $ for $i\leq \ell $
such that $\left\{ f_{\tau _{i}}\left( t_{i}\right) :i\leq \ell \right\}
\cup \left\{ t_{\ell +1}\right\} $ is a chain in $\mathbb{P}$ one has that
the color of $\tau _{0}\left( x_{n_{0}}\left( t_{0}\right) \right) +\cdots
+\tau _{\ell }\left( x_{n_{\ell }}\left( t_{\ell }\right) \right) +\xi
\left( t_{\ell +1}\right) $ is equal to the color of $\xi \left( t\right) $.
\end{enumerate}

Let us consider initially the case $m=0$. In this case $\left( S^{\mathbb{P}%
}\right) ^{0}$ is a single point. Therefore $\psi _{0}^{(S\mathfrak{)}}$
selects a finite subset $S_{0}$ of $S$, and $\psi _{0}^{(\mathcal{F})}$
selects a finite subset $\mathcal{F}_{0}$ of $\mathcal{F}_{\alpha }$. We
need to find a function $x_{0}:\mathbb{P}\rightarrow S$ such that $%
x_{0}\left( t\right) \in S_{t}\cap \varphi _{S}\left( S_{0}\right) $ for
every $t\in \mathbb{P}$ and such that the following holds:

\begin{enumerate}
\item[($1_{0}$)] for every $t_{0}\in \mathbb{P}$ and $\tau _{0}\in \mathcal{F%
}_{0}$ the color of $\tau _{0}\left( x_{0}\left( t_{0}\right) \right) $ is
equal to the color of $\xi \left( f_{\tau _{0}}\left( t_{0}\right) \right) $%
, and

\item[($2_{0}$)] for every $t_{0},t_{1}\in \mathbb{P}$ and $\tau \in 
\mathcal{F}_{0}$ if $\left\{ f_{\tau _{0}}\left( t_{0}\right) ,t_{1}\right\} 
$ is a chain in $\mathbb{P}$ with least element $t$, then the color of $\tau
_{0}\left( x_{0}\left( t_{0}\right) \right) +\xi \left( t_{1}\right) $ is
equal to the color of $\xi \left( t\right) $.
\end{enumerate}

Fix $t\in \mathbb{P}$. Using the notation of ultrafilter quantifiers for the
ultrafilter $\xi \left( t\right) $, we have that $\xi \left( t\right) s$, $%
\forall \tau _{0}\in \mathcal{F}_{0}$, $\forall t_{1}\in \mathbb{P}$ such
that $\left\{ f_{\tau _{0}}\left( t\right) ,t_{1}\right\} $ is a chain in $%
\mathbb{P}$ with least element $t_{\min }$, one has that $s\in S_{t}\cap
\varphi _{S}\left( S_{0}\right) $, the color of $\tau _{0}\left( s\right) $
is equal to the color of $\xi \left( f_{\tau _{0}}\left( t\right) \right) $
and the color of $\tau _{0}\left( s\right) +\xi _{1}\left( t_{1}\right) $ is
equal to the color of $\xi \left( t_{\min }\right) $. This allows one to
choose $x_{0}\left( t\right) \in S_{t_{0}}\cap \varphi _{S}\left(
S_{0}\right) $ satisfying ($1_{0}$) and ($2_{0}$).

We now consider the case $m=1$. In this case $\psi _{0}^{(\mathcal{F})}$
selects a finite subset $\mathcal{F}_{1}$ of $\mathcal{F}_{\alpha }$, and $%
\psi _{1}^{(S\mathcal{)}}$ selects a fintie subset $S_{1}=\psi
_{1}^{(S)}\left( x_{0}\right) $ that contains $\tau _{0}\left( x_{0}\left(
t\right) \right) $ for every $\tau _{0}\in \mathcal{F}_{0}$ and $t\in 
\mathbb{P}$. From ($2_{0}$) we deduce that

\begin{enumerate}
\item[($3_{0}$)] for every $t_{0},t_{1},t_{2}\in \mathbb{P}$ and $\tau
_{0}\in \mathcal{F}_{0}$, if $\left\{ f_{\tau _{0}}\left( t_{0}\right)
,t_{1},t_{2}\right\} $ is a chain in $\mathbb{P}$ with least element $t$,
then the color of $\tau _{0}\left( x_{0}\left( t_{0}\right) \right) +\xi
\left( t_{1}\right) +\xi \left( t_{2}\right) $ is equal to the color of $\xi
\left( t\right) $.
\end{enumerate}

Now fix $t\in \mathbb{P}$. We have that $\xi \left( t\right) s$, $\forall
\tau _{0}\in \mathcal{F}_{0}$, $\forall \tau _{1}\in \mathcal{F}_{1}$, $%
\forall t_{0},t_{2}\in \mathbb{P}$, one has that $s\in S_{t}\cap \varphi
_{S}\left( S_{1}\right) $, the color of $\tau _{1}\left( s\right) $ is equal
to the color of $\xi \left( f_{\tau }\left( t\right) \right) $, if $\left\{
f_{\tau _{0}}\left( t_{0}\right) ,f_{\tau _{1}}\left( t\right) \right\} $ is
a chain in $\mathbb{P}$ with least element $t_{\min }$ then the color of $%
\tau _{0}\left( x\left( t_{0}\right) \right) +\tau _{1}\left( s\right) $ is
equal to the color of $\xi \left( t_{\min }\right) $, if $\left\{ f_{\tau
_{0}}\left( t_{0}\right) ,f_{\tau _{1}}\left( t\right) ,t_{2}\right\} $ is a
chain in $\mathbb{P}$ with least element $t_{\min }$ then the color of $\tau
_{0}\left( x\left( t_{0}\right) \right) +\tau _{1}\left( s\right) +\xi
\left( t_{2}\right) $ is equal to the color of $\xi \left( t_{\min }\right) $%
, and if $\left\{ \tau _{1}\left( t\right) ,t_{2}\right\} $ is a chain in $%
\mathbb{P}$ with least element $t_{\min }$ then the color of $\tau
_{1}\left( s\right) +\xi \left( t_{2}\right) $ is equal to the color of $\xi
\left( t_{\min }\right) $. This allows one to choose $x_{1}\left( t\right)
\in S_{t}\cap \varphi _{S}\left( S_{1}\right) $ in such a way that ($1_{1}$)
and ($2_{1}$) are satisfied.

Suppose that a sequence as above has been defined up to $m$ in such a way
that ($1_{m}$) and ($2_{m}$) are satisfied. From ($2_{m}$) and the fact that 
$\xi $ is an order-preserving idempotent in $\left( \gamma S\right) _{\alpha
}$, it follows that the following holds as well:

\begin{enumerate}
\item[($3_{m}$)] for every $\ell \leq m$, $n_{0}<n_{1}<\cdots <n_{\ell }\leq
m$, $t_{i}\in \mathbb{P}$ for $i\leq \ell +2$, $\tau _{i}\in \psi _{n_{i}}^{(%
\mathcal{F})}\left( x_{0},\ldots ,x_{n_{i}-1}\right) $ for $i\leq \ell $
such that $\left\{ f_{\tau _{i}}\left( t_{i}\right) :i\leq \ell \right\}
\cup \left\{ t_{i+1},t_{i+2}\right\} $ is a chain in $\mathbb{P}$ with least
element $t$, one has that the color of $\tau _{0}\left( x_{n_{0}}\left(
t_{0}\right) \right) +\cdots +\tau _{\ell }\left( x_{n_{\ell }}\left(
t_{\ell }\right) \right) +\xi \left( t_{\ell +1}\right) +\xi \left( t_{\ell
+2}\right) $ is equal to the color of $\xi \left( t\right) $.
\end{enumerate}

Fix $t\in \mathbb{P}$. Using ($2_{m}$), ($3_{m}$) we see that $\xi \left(
t\right) s$, for every $\ell \leq m$, $n_{0}<n_{1}<\cdots <n_{\ell }\leq m$, 
$t_{i}\in \mathbb{P}$ for $i\leq \ell +2$, $\tau _{i}\in \psi _{n_{i}}^{(%
\mathcal{F})}\left( x_{0},\ldots ,x_{n_{i}-1}\right) $ for $i\leq \ell $ and 
$\tau \in \psi _{m+1}^{(\mathcal{F})}\left( x_{0},\ldots ,x_{m}\right) $, if 
$\left\{ f_{\tau _{i}}\left( t_{i}\right) :i\leq \ell \right\} \cup \left\{
f_{\tau }\left( t\right) \right\} $ is a chain in $\mathbb{P}$ with least
element $t_{\min }$ then the color of $\tau _{0}\left( x_{n_{0}}\left(
t_{0}\right) \right) +\cdots +\tau _{\ell }\left( x_{n_{\ell }}\left(
t_{\ell }\right) \right) +\tau \left( s\right) $ is equal to the color of $%
\xi (t_{\min })$, and if $\left\{ f_{\tau _{i}}\left( t_{i}\right) :i\leq
\ell \right\} \cup \left\{ f_{\tau }\left( t\right) ,t_{\ell +2}\right\} $
is a chain in $\mathbb{P}$ with least element $t_{\min }$ then the color of $%
\tau _{0}\left( x_{n_{0}}\left( t_{0}\right) \right) +\cdots +\tau _{\ell
}\left( x_{n_{\ell }}\left( t_{\ell }\right) \right) +\tau \left( s\right)
+\xi \left( t_{\ell +2}\right) $ is equal to the color of $\xi (t_{\min })$.
This allows one to choose $x_{m+1}\left( t\right) $ for every $t\in \mathbb{P%
}$ in such a way that ($1_{m+1}$) and ($2_{m+1}$) are satisfied. This
concludes the recursive construction.
\end{proof}

\subsection{Actions of trees on partial semigroups\label%
{Subsection:action-tree}}

Suppose that $T$ is a finite rooted tree. As in Subsection \ref%
{Subsection:action-compact-semigroup}, we consider $T$ as an ordered set
with respect to its canonical ordering. This is defined by setting $%
t_{0}\leq t_{1}$ if and only if $t_{0},t_{1}\in T$ and $t_{0}$ is a
descendent of $t_{1}$.

\begin{definition}
\label{Definition:Ramsey-action}Suppose that $\alpha $ is an action of a
finite rooted tree $T$ on an adequate partial semigroup $S$ as in Definition %
\ref{Definition:action-partial}. We say that $\alpha $ is Ramsey if, for
every $\tau \in \mathcal{F}_{\alpha }$, the corresponding spine $f_{\tau
}:T\rightarrow T$ is a regressive homomorphism, and for any finite subset $%
S_{0}$ of $S$, for any finite coloring $c$ of $S$, and any finite subset $%
\mathcal{F}_{0}$ of $\mathcal{F}_{\alpha }$, there exists a function $%
x:T\rightarrow S$ such that, for any $\tau \in \mathcal{F}_{0}$ and $t\in T$%
, $x\left( t\right) \in S_{t}\cap \varphi _{S}\left( S_{0}\right) $ and the
color of $\tau \left( x\left( t\right) \right) $ depends only on $f_{\tau
}\left( t\right) $.
\end{definition}

While the definition of adequate action might seem difficult to verify, it
holds trivially in many examples, including the case of the action
corresponding to Gowers' theorem for multiple tetris operations.

\begin{theorem}
\label{Theorem:monochromatic-tree-action}Suppose that $\alpha $ is an action
of a finite rooted tree on an adequate partial semigroup $S$ given by some
semigroup $\mathcal{F}_{\alpha }\subset \mathrm{End}\left( S\right) $ such
that, for every $\tau \in \mathcal{F}_{\alpha }$, the corresponding spine $%
f_{\tau }$ is a regressive homomorphism. The following statements are
equivalent:

\begin{enumerate}
\item $\alpha $ is Ramsey;

\item the action induced by $\alpha $ on $\gamma S$ is Ramsey;

\item for any finite coloring $c$ of $S$, sequence $(\psi _{n}^{(\mathcal{F}%
)})$ of functions $\psi _{n}^{(\mathcal{F})}:(S^{T})^{n}\rightarrow \left[ 
\mathcal{F}_{\alpha }\right] ^{<\aleph _{0}}$ and sequence $(\psi
_{n}^{(S)}) $ of functions $\psi _{n}^{(S)}:(S^{T})^{n}\rightarrow \left[ S%
\right] ^{<\aleph _{0}}$ such that $\psi _{n}\left( x_{0},\ldots
,x_{n-1}\right) $ contains the range of $\tau _{i}\circ x_{i}$ for every $%
i\in n$ and $\tau _{i}\in \psi _{i}^{(\mathcal{F})}\left( x_{0},\ldots
,x_{i-1}\right) $, there exist functions $x_{n}:T\rightarrow S$ such that

\begin{itemize}
\item $x_{n}\left( t\right) \in S_{t}\cap (\varphi _{S}\circ \psi
_{n}^{(S)})\left( x_{0},\ldots ,x_{n-1}\right) $ for every $n\in \omega $
and $t\in T$; and

\item for any $\ell \in \omega $, $n_{0}<n_{1}<\cdots <n_{\ell }\in \omega $%
, $t_{i}\in T$ for $i\leq \ell $, and $\tau _{i}\in \psi _{n}^{(\mathcal{F}%
)}\left( x_{0},\ldots ,x_{n_{i}-1}\right) $ for $i\leq \ell $, if $\left\{
f_{\tau _{i}}\left( t_{i}\right) :i\leq \ell \right\} $ is a chain in $T$
with least element $t$, then the color of $\tau _{0}\left( x_{n_{0}}\left(
t_{0}\right) \right) +\cdots +\tau _{\ell }\left( x_{n_{\ell }}\left(
t_{\ell }\right) \right) $ depends only on $t$.
\end{itemize}
\end{enumerate}
\end{theorem}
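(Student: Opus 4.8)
The plan is to prove the cycle of implications $(1)\Rightarrow(2)\Rightarrow(3)\Rightarrow(1)$. The implication $(2)\Rightarrow(3)$ is essentially already in hand: by Proposition~\ref{Proposition:tree} (applied to the Ramsey action induced on $\gamma S$, a compact right topological semigroup, since a finite rooted tree has height $\leq\omega$) there is an order-preserving idempotent $\xi\in(\gamma S)_\alpha$, and I would feed this $\xi$ together with the given sequences $(\psi_n^{(\mathcal F)})$, $(\psi_n^{(S)})$ into Theorem~\ref{Theorem:monochromatic-action}. That theorem produces functions $x_n\colon T\to S$ with exactly the stated membership property and with the color of $\tau_0(x_{n_0}(t_0))+\cdots+\tau_\ell(x_{n_\ell}(t_\ell))$ equal to the color of $\xi(t)$, where $t$ is the least element of the chain $\{f_{\tau_i}(t_i):i\leq\ell\}$; in particular this color depends only on $t$, which is (3).

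For $(3)\Rightarrow(1)$ I would specialize (3) to the trivial choice of the functions $\psi_n$: given a finite subset $S_0\subset S$, a finite coloring $c$, and a finite $\mathcal F_0\subset\mathcal F_\alpha$, let $\psi_0^{(\mathcal F)}$ be the constant function with value $\mathcal F_0$, let $\psi_0^{(S)}$ be the constant function with value $S_0$, and for $n\geq 1$ let $\psi_n^{(\mathcal F)}$ and $\psi_n^{(S)}$ be chosen (as is always possible) so that $\psi_n^{(S)}(x_0,\dots,x_{n-1})$ contains all the required ranges. Applying (3) yields in particular a function $x:=x_0\colon T\to S$ with $x(t)\in S_t\cap\varphi_S(S_0)$ and, taking $\ell=0$ in the second bullet, the color of $\tau(x(t))$ equal to a value depending only on $f_\tau(t)$, for all $\tau\in\mathcal F_0$ and $t\in T$. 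Together with the hypothesis that every spine is a regressive homomorphism, this is precisely the definition of $\alpha$ being Ramsey.

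The implication $(1)\Rightarrow(2)$ is the one requiring real work. Here I would unwind Definition~\ref{Definition:Ramsey-action} and Definition~\ref{Definition:rooted-action-compact}: I must show the action induced on $\gamma S$ has nonempty $(\gamma S)_\alpha$ and that each spine is a regressive homomorphism (the latter is immediate from the hypothesis). For nonemptiness of $(\gamma S)_\alpha$, the idea is to build, for each $t\in T$, a cofinal ultrafilter $\xi(t)\in\gamma S_t$ compatible with the $\tau$'s, i.e.\ $\tau(\xi(t))=\xi(f_\tau(t))$. One constructs $\xi(t)$ as a limit of the finite approximations furnished by (1): the Ramsey property says that for every finite $S_0$, every finite coloring $c$, and every finite $\mathcal F_0$ there is an $x\colon T\to S$ with $x(t)\in S_t\cap\varphi_S(S_0)$ and $c(\tau(x(t)))$ depending only on $f_\tau(t)$. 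Viewing $c$ as ranging over all finite colorings and taking an ultrafilter limit over the (directed) collection of pairs $(S_0,\mathcal F_0)$, one obtains for each $t$ an ultrafilter $\xi(t)$ in $\overline{S_t\cap\varphi_S(S_0)}$ for all $S_0$, hence in $\gamma S_t$ (cofinality), such that for every finite coloring $c$ the value $c(\tau(\xi(t)))$ is forced to depend only on $f_\tau(t)$; since two ultrafilters agreeing on all finite colorings coincide, this gives $\tau(\xi(t))=\xi(f_\tau(t))$, so $\xi\in(\gamma S)_\alpha$.

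The main obstacle I expect is the bookkeeping in $(1)\Rightarrow(2)$: one must ensure the finite approximations cohere across \emph{all} nodes $t$ of $T$ simultaneously and across all $\tau\in\mathcal F_0$, so that the ultrafilter limit is genuinely a single function $\xi$ on $T$ intertwining the whole semigroup $\mathcal F_\alpha$. The clean way is to index the ultrafilter limit by the net of triples $(S_0,\mathcal F_0,c)$ ordered by componentwise inclusion/refinement, note that $T$ and $\mathcal F_\alpha$ need not be finite in the limit but every individual constraint involves only finitely many nodes and maps, and then invoke compactness of $\beta S$ to extract the limit; checking that $\tau\circ\xi=\xi\circ f_\tau$ then reduces to the observation that for a fixed $\tau$ and $t$, any finite coloring separating $\tau(\xi(t))$ from $\xi(f_\tau(t))$ would contradict the ``depends only on $f_\tau(t)$'' clause beyond the stage where $\tau\in\mathcal F_0$. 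I would also remark that once $(\gamma S)_\alpha\neq\emptyset$, Proposition~\ref{Proposition:tree} upgrades any idempotent (which exists by Ellis's theorem applied to the compact right topological semigroup $(\gamma S)_\alpha$) to an order-preserving one, completing the verification that the induced action is Ramsey in the sense of Definition~\ref{Definition:rooted-action-compact}.
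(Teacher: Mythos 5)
Your proposal is correct and follows essentially the same route as the paper: the compactness/ultrafilter-limit argument for $(1)\Rightarrow(2)$, Proposition~\ref{Proposition:tree} combined with Theorem~\ref{Theorem:monochromatic-action} for $(2)\Rightarrow(3)$, and specialization to length-one data for $(3)\Rightarrow(1)$ are exactly the paper's steps. The only (immaterial) difference is organizational: you close the equivalence as a single cycle, whereas the paper also records the direct implication $(2)\Rightarrow(1)$ via ultrafilter quantifiers.
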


\begin{proof}
(1)$\Rightarrow $(2) Since $\alpha $ is Ramsey, we have that for any finite
subset $S_{0}$ of $S$, for any finite coloring $c$ of $S$, and any finite
subset $\mathcal{F}_{0}$ of $\mathcal{F}_{\alpha }$, there exists a function 
$x:T\rightarrow \varphi _{S}\left( S_{0}\right) $ such that, for any $\tau
\in \mathcal{F}_{0}$ and $t\in T$, the color of $\tau \left( x\left(
t\right) \right) $ depends only on $f_{\tau }\left( t\right) $. By
compactness of $\beta S$ we deduce that there exists a function $\xi
:T\rightarrow \gamma S$ such that, for any $\tau \in \mathcal{F}$, $t\in T$,
and any finite coloring $c$ of $S$, the color of $\tau \left( \xi \left(
t\right) \right) $ depends only on $f_{\tau }\left( t\right) $. This being
true for any coloring of $S$ implies that $\tau \left( \xi \left( t\right)
\right) =\xi \left( f_{\tau }\left( t\right) \right) $ for every $t\in T$
and $\tau \in \mathcal{F}_{\alpha }$. Therefore $\xi \in \left( \gamma
S\right) _{\alpha }$.

(2)$\Rightarrow $(1) Suppose that $\xi \in \left( \gamma S\right) _{\alpha }$%
. Fix finite subsets $S_{0}$ of $S$, $\mathcal{F}_{0}$ of $\mathcal{F}%
_{\alpha }$, and a finite coloring $c$ of $S$. Consider the canonical
extension of $c$ to a finite coloring of $\beta S$. We have that, for every $%
t\in T$ and $\tau \in \mathcal{F}_{0}$, $\tau \left( \xi \left( t\right)
\right) =\xi \left( f_{\tau }\left( t\right) \right) $. In particular the
color of $\tau \left( \xi \left( t\right) \right) $ is equal to the color of 
$\xi \left( f_{\tau }\left( t\right) \right) $. Fix $t\in T$. Using the
notation of ultrafilter quantifiers, we have that $\xi \left( t\right) s$, $%
\forall \tau \in \mathcal{F}_{0}$, $s\in S_{t}\cap \varphi _{S}\left(
S_{0}\right) $ and the color of $\tau \left( s\right) $ is equal to the
color of $\xi \left( f_{\tau }\left( t\right) \right) $. Therefore we can
choose an element $x\left( t\right) \in S_{t}\cap \varphi _{S}\left(
S_{0}\right) $ for every $t\in T$ such that the function $x:T\rightarrow S$
witnesses that the action $\alpha $ is Ramsey.

(3)$\Rightarrow $(1) Observe that the definition of Ramsey action is the
particular instance of (4) where the sequence $\left( x_{n}\right) $ has
length $1$.

(3)$\Rightarrow $(4) This is a consequence of Proposition \ref%
{Proposition:tree} and Theorem \ref{Theorem:monochromatic-action}.
\end{proof}

In the Section \ref{Section:examples} we will explain how various results in
the literature can be seen as a special instance of Theorem \ref%
{Theorem:monochromatic-tree-action}.

\subsection{Products of actions\label{Subsection:products}}

We recall the notion of tensor product \cite[Section 11.1]%
{hindman_algebra_2012}---see also \cite[Section 1.2]%
{todorcevic_introduction_2010}---of ultrafilters. Suppose that $\mathcal{U}$
and $\mathcal{V}$ are ultrafilter on sets $A,B$, respectively. Then $%
\mathcal{U}\otimes \mathcal{V}$ is the ultrafilter on $A\times B$ obtained
by declaring, for any $C\subset A\times B$, $C\in \mathcal{U}\otimes 
\mathcal{V}$ if and only if $\left( \mathcal{U}a\right) $, $\left( \mathcal{V%
}b\right) $, $\left( a,b\right) \in C$. Observe that in the particular case
when $\mathcal{U}$ is the principal ultrafilter over $a\in A$, then $C\in 
\mathcal{U}\otimes \mathcal{V}$ if and only if $\left\{ b\in B:\left(
a,b\right) \in C\right\} \in \mathcal{V}$. In the following we identify an
element $a$ of a set $A$ with the corresponding principal ultrafilter. The
following result is proved in \cite[Corollary 2.8]{bergelson_polynomial_2014}

\begin{theorem}[Bergelson--Hindman--Williams]
\label{Theorem:BH-tensor}Suppose that $m,k\in \mathbb{N}$ and $\lambda
:m\rightarrow k$ is a function. Suppose that for $i\in k$, $\left(
S_{i},+_{i}\right) $ is a semigroup and $p_{i}\in \gamma S_{i}$ is an
idempotent element. Set $S:=S_{\lambda \left( 0\right) }\times \cdots \times
S_{\lambda \left( m-1\right) }$ and suppose that $A$ is a subset of $S$. If $%
A\in p_{\lambda \left( 0\right) }\otimes \cdots \otimes p_{\lambda \left(
m-1\right) }$ then for each $i\in k$ there exist a sequence $\left(
y_{i,n}\right) _{n\in \omega }$ in $S_{i}$ such that%
\begin{equation*}
\left\{ \left( \sum\nolimits_{d\in F_{s}}^{\lambda \left( s\right)
}y_{\lambda \left( s\right) ,d}\right) _{s\in m}:F_{0}<F_{1}<\cdots <F_{m-1}%
\text{ are finite subsets of }\omega \right\}
\end{equation*}%
is contained in $A$.
\end{theorem}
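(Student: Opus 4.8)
The plan is to produce the sequences $(y_{i,n})_{n\in\omega}$, $i\in k$, by a single recursion on $n$ carried out for all $i$ at once, each step being an application of the standard Galvin--Glazer--Hindman ultrafilter lemma inside the relevant semigroup $S_i$. For $i\in k$ not in the range of $\lambda$ the sequence $(y_{i,n})$ does not occur in the conclusion, so we simply fix any element of $S_i$ (which is nonempty, as $p_i\in\gamma S_i$) and take it to be constant; from now on only the indices $i$ in the range of $\lambda$ matter.

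First I would fix notation for the nested fibers of $A$. For $0\le j\le m$ and $(s_0,\dots,s_{j-1})\in S_{\lambda(0)}\times\cdots\times S_{\lambda(j-1)}$ put $A_{s_0,\dots,s_{j-1}}:=\{(s_j,\dots,s_{m-1}):(s_0,\dots,s_{m-1})\in A\}$, and call $(s_0,\dots,s_{j-1})$ \emph{admissible} if $A_{s_0,\dots,s_{j-1}}\in p_{\lambda(j)}\otimes\cdots\otimes p_{\lambda(m-1)}$. The empty tuple is admissible, this being exactly the hypothesis $A\in p_{\lambda(0)}\otimes\cdots\otimes p_{\lambda(m-1)}$; and unwinding the definition of $\otimes$ shows that if $(s_0,\dots,s_{j-1})$ is admissible then $B_j(s_0,\dots,s_{j-1}):=\{s_j\in S_{\lambda(j)}:(s_0,\dots,s_{j-1},s_j)\text{ is admissible}\}$ belongs to $p_{\lambda(j)}$, that $(s_0,\dots,s_j)$ is admissible for each $s_j\in B_j(s_0,\dots,s_{j-1})$, and that $B_{m-1}(s_0,\dots,s_{m-2})$ is the fiber $A_{s_0,\dots,s_{m-2}}$ itself, so membership of $s_{m-1}$ in it already forces $(s_0,\dots,s_{m-1})\in A$. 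Recall also that for a semigroup $(R,+)$, an idempotent $p\in\gamma R$ and $C\in p$, writing $-s+C:=\{t:s+t\in C\}$ and $C^\star:=\{s\in C:-s+C\in p\}$, one has $C^\star\in p$ and $-s+C^\star\in p$ for every $s\in C^\star$ (see \cite[Chapter 2]{todorcevic_introduction_2010} and \cite{hindman_algebra_2012}).

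Now the recursion. Having chosen $y_{i,d}$ for all $i$ and all $d<n$, I would choose the $y_{i,n}$ (for $i$ in the range of $\lambda$; the others were already fixed) so as to maintain the following invariant: for every $0\le j\le m-1$ and all nonempty finite sets $F_0<F_1<\cdots<F_j$ contained in $\{0,\dots,n-1\}$, setting $\sigma_s:=\sum_{d\in F_s}y_{\lambda(s),d}$, one has $\sigma_j\in\bigl(B_j(\sigma_0,\dots,\sigma_{j-1})\bigr)^\star$, the $\star$ being taken in $S_{\lambda(j)}$ with respect to $p_{\lambda(j)}$. A side induction on $j$, using the invariant for indices $<j$, shows that every tuple $(\sigma_0,\dots,\sigma_{j-1})$ appearing here is admissible, so $B_j(\sigma_0,\dots,\sigma_{j-1})\in p_{\lambda(j)}$ and its $\star$ is defined. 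When we pass from $n-1$ to $n$, the new instances of the invariant are exactly those in which $n$ lies in the last block $F_j$; since $F_0<\cdots<F_j$, necessarily $n=\max F_j$, so $F_j=G\cup\{n\}$ with $G\subseteq\{0,\dots,n-1\}$. For such a configuration the invariant reads $\bigl(\sum_{d\in G}y_{\lambda(j),d}\bigr)+y_{\lambda(j),n}\in\bigl(B_j(\sigma_0,\dots,\sigma_{j-1})\bigr)^\star$, i.e. $y_{\lambda(j),n}$ must lie in $-\bigl(\sum_{d\in G}y_{\lambda(j),d}\bigr)+\bigl(B_j(\sigma_0,\dots,\sigma_{j-1})\bigr)^\star$ (read as $\bigl(B_j(\sigma_0,\dots,\sigma_{j-1})\bigr)^\star$ itself when $G=\emptyset$). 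Each of these finitely many subsets of $S_{\lambda(j)}$ lies in $p_{\lambda(j)}$: when $G=\emptyset$ because $\bigl(B_j(\sigma_0,\dots,\sigma_{j-1})\bigr)^\star\in p_{\lambda(j)}$, and when $G\ne\emptyset$ because the invariant for $j$ at the shorter configuration $F_0<\cdots<F_{j-1}<G$ (valid by the previous stage, as all its blocks lie in $\{0,\dots,n-1\}$) says precisely that $\sum_{d\in G}y_{\lambda(j),d}\in\bigl(B_j(\sigma_0,\dots,\sigma_{j-1})\bigr)^\star$, so the shifted set is in $p_{\lambda(j)}$ by the second clause of the Galvin--Glazer--Hindman lemma. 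Hence for each $i$ the set of admissible values for $y_{i,n}$ is a finite intersection of members of $p_i$, in particular nonempty; we pick $y_{i,n}$ in it, which preserves the invariant.

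Finally, for any $F_0<\cdots<F_{m-1}$, the invariant with $j=m-1$ gives $\sigma_{m-1}\in\bigl(B_{m-1}(\sigma_0,\dots,\sigma_{m-2})\bigr)^\star\subseteq B_{m-1}(\sigma_0,\dots,\sigma_{m-2})=A_{\sigma_0,\dots,\sigma_{m-2}}$, i.e. $(\sigma_0,\dots,\sigma_{m-1})\in A$, which is the assertion. I expect the only real obstacle to be the bookkeeping in the recursive step: one must verify that the target for $y_{i,n}$ is a \emph{finite} intersection of $p_i$-sets, and this rests on the observation that the ``shift obstruction'' produced by enlarging a block from $G$ to $G\cup\{n\}$ in the $j$-th coordinate is absorbed by the \emph{same} invariant applied to the configuration ending in $G$. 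This is the product-semigroup incarnation of the usual proof of the Milliken--Taylor theorem, and once the shape of the invariant is fixed every verification reduces to the two clauses of the Galvin--Glazer--Hindman lemma together with the definition of $\otimes$.
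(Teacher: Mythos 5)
Your proof is correct: the nested-fiber/admissibility bookkeeping combined with the two clauses of the Galvin--Glazer lemma ($C^{\star}\in p$ and $-s+C^{\star}\in p$ for $s\in C^{\star}$) is the standard argument, and the one delicate point---that the only new instances of the invariant at stage $n$ place $n$ at the top of the last block, so each shift obstruction is absorbed by the same invariant applied to the configuration ending in $G$, yielding only finitely many $p_{i}$-large constraints on $y_{i,n}$---is handled correctly. For the record, the paper does not prove this statement itself but quotes it from \cite[Corollary 2.8]{bergelson_polynomial_2014}; its proof of the generalization, Theorem \ref{Theorem:product-action}, runs by an entirely analogous recursion whose invariants ($1_{\ell}$) and ($2_{\ell}$), phrased via colors of tensor products of the idempotents, are the ultrafilter-quantifier packaging of your $\star$-set invariant.
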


In the statement of Theorem \ref{Theorem:BH-tensor} and in the following,
the expression $\sum_{d\in F}^{j}y_{d}$, where $\left( y_{n}\right) $ is a
sequence in a partial semigroup $\left( S,+_{j}\right) $, denotes the
element $y_{d_{0}}+_{j}y_{d_{1}}+_{j}\cdots +_{j}y_{d_{\ell }}$ of $S$,
where $\left\{ d_{0},\ldots ,d_{\ell }\right\} $ is an increasing
enumeration of $F$. Whenever we write such an expression, we will also
implicitly assert that $y_{d_{0}}+_{j}y_{d_{1}}+_{j}\cdots +_{j}y_{d_{\ell
}} $ is defined in $\left( S,+_{j}\right) $.

We will now present a generalization of Theorem \ref{Theorem:BH-tensor} to
the setting of actions of ordered sets on adequate partial semigroups.
Suppose that $m,k\in \mathbb{N}$ and $\lambda :m\rightarrow k$ is a
function. For each $i\in k$ let $\left( S_{i},+_{i}\right) $ be an\emph{\ }%
adequate partial semigroup, and $\mathbb{P}_{i}$ be a \emph{finite} ordered
set. For every $i\in k$ we can consider an action $\alpha _{i}$ of $\mathbb{P%
}_{i}$ on $\left( S_{i},+_{i}\right) $. This is given by a function $\mathbb{%
P}_{i}\rightarrow \mathcal{S}\left( S_{i},+_{i}\right) $, $t\mapsto S_{i,t}$
such that $S_{i,t}\leq S_{i,t^{\prime }}$ whenever $t\leq t^{\prime }$, and
a semigroup $\mathcal{F}_{\alpha _{i}}$ of adequate partial semigroup
homomorphisms of $\left( S_{i},+_{i}\right) $ such that for every $\tau \in 
\mathcal{F}_{\alpha _{i}}$ there exists a function $f_{\tau }:\mathbb{P}%
_{i}\rightarrow \mathbb{P}_{i}$ with the property that $\tau $ maps \ $%
S_{i,t}$ to $S_{i,f_{\tau }\left( t\right) }$ for every $t\in \mathbb{P}_{i}$%
. Recall that each action $\alpha _{i}$ extends to an action of $\mathbb{P}%
_{i}$ on $\gamma S_{i}$. In this case $\left( \gamma S_{i}\right) _{\alpha
_{i}}$ denotes the set of functions $\xi :\mathbb{P}_{i}\rightarrow \gamma
S_{i}$ such that, for any $t,t^{\prime }\in \mathbb{P}$, $\xi \left(
t\right) $ is an idempotent element of $\gamma S_{i,t}$ and $\xi \left(
t\right) \leq \xi \left( t^{\prime }\right) $ if $t\leq t^{\prime }$. The
following result is the natural generalization of Theorem \ref%
{Theorem:BH-tensor} to the setting of actions of ordered sets on partial
semigroups.

\begin{theorem}
\label{Theorem:product-action}Suppose that $\xi _{i}\in \left( \gamma
S_{i}\right) _{\alpha _{i}}$ is an order-preserving idempotent for $i\in k$.
Set $S:=S_{\lambda \left( 0\right) }\times \cdots \times S_{\lambda \left(
m-1\right) }$ and suppose that $c$ is a finite coloring of $S$. Extend $c$
canonically to a coloring of $\beta S$. Fix for $i\in k$ a sequence $(\psi
_{i,n}^{(\mathcal{F})})$ of functions $\psi _{i,n}^{(\mathcal{F})}:(S_{i}^{%
\mathbb{P}_{i}})^{n}\rightarrow \left[ \mathcal{F}_{\alpha _{i}}\right]
^{<\aleph _{0}}$ and a sequence $(\psi _{i,n}^{(S)})$ of functions $\psi
_{i,n}^{(S)}:(S_{i}^{\mathbb{P}_{i}})^{n}\rightarrow \left[ S_{i}\right]
^{<\aleph _{0}}$ such that $\psi _{i,n}\left( x_{i,0},\ldots
,x_{i,n-1}\right) $ contains the range of $\tau _{j}\circ x_{j}$ for every $%
j\in n$ and $\tau _{j}\in \psi _{i,j}^{(\mathcal{F})}\left( x_{i,0},\ldots
,x_{i,j-1}\right) $. Then there exist sequences $\left( x_{i,n}\right)
_{n\in \omega }$ of functions $x_{i,n}:\mathbb{P}_{i}\rightarrow S_{i}$ for $%
i\in k$ such that

\begin{itemize}
\item $x_{i,n}\left( t\right) \in S_{i,t}\cap (\varphi _{S_{i}}\circ \psi
_{i,n}^{(S)})\left( x_{i,0},\ldots ,x_{i,n-1}\right) $ for every $n\in
\omega $, $i\in k$, and $t\in \mathbb{P}_{i}$; and

\item for any $\ell \in \omega $, $i\in k$, finite subsets $%
F_{0}<F_{1}<\cdots <F_{\ell -1}$ of $\omega $, sequences $\left(
t_{i,n}\right) _{n\in \omega }$ in $\mathbb{P}_{i}$ and $\left( \tau
_{i,n}\right) _{n\in \omega }$ in $\mathcal{F}_{\alpha _{i}}$ such that $%
\tau _{i,n}\in \psi _{i,n}^{(\mathcal{F})}\left( x_{i,0},\ldots
,x_{i,n-1}\right) $ for every $n\in \omega $, if $\{f_{\tau _{\lambda \left(
s\right) ,d}}\left( t_{\lambda \left( s\right) ,d}\right) :d\in F_{s}\}$ is
a chain in $\mathbb{P}_{\lambda \left( s\right) }$ with least element $t_{s}$
for every $s\in m$, then the color of%
\begin{equation*}
\left( \sum\nolimits_{d\in F_{s}}^{\lambda \left( s\right) }\tau _{\lambda
(s),d}\left( x_{\lambda (s),d}\left( t_{\lambda \left( s\right) ,d}\right)
\right) \right) _{s\in m}
\end{equation*}%
is the same as the color of $\xi _{\lambda (0)}\left( t_{0}\right) \otimes
\xi _{\lambda \left( 1\right) }\left( t_{1}\right) \otimes \cdots \otimes
\xi _{\lambda \left( m-1\right) }\left( t_{m-1}\right) $.
\end{itemize}
\end{theorem}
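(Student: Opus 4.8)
The plan is to reduce Theorem~\ref{Theorem:product-action} to Theorem~\ref{Theorem:monochromatic-action} by passing from the family of actions $\alpha_i$ on the $S_i$ to a single action on a suitable partial semigroup built from the $S_i$. First I would form, for each $i\in k$, the ``tensor diagonal'' semigroup whose underlying set is $S_i^{m_i}$ where $m_i=|\lambda^{-1}(i)|$; more precisely, enumerate $\lambda^{-1}(i)=\{s_{i,1}<\cdots<s_{i,m_i}\}$ and build the product partial semigroup $\widehat S:=\prod_{s\in m}S_{\lambda(s)}$ but regard it as acted on by the product ordered set $\widehat{\mathbb P}:=\prod_{s\in m}\mathbb P_{\lambda(s)}$, with $\widehat S_{(t_0,\dots,t_{m-1})}:=\prod_s S_{\lambda(s),t_s}$ and $\mathcal F_{\widehat\alpha}$ generated by the maps $(\tau_{i})_{i\in k}$ acting coordinatewise via $\tau_{\lambda(s)}$ on the $s$-th coordinate. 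The key point, which makes the block-structure ``$F_0<F_1<\cdots<F_{m-1}$'' match the ``chain with least element $t$'' formalism of Theorem~\ref{Theorem:monochromatic-action}, is the standard observation (as in the proof of Theorem~\ref{Theorem:BH-tensor} in \cite{bergelson_polynomial_2014}) that $\xi_{\lambda(0)}(t_0)\otimes\cdots\otimes\xi_{\lambda(m-1)}(t_{m-1})$ is an idempotent of $\gamma\widehat S$ precisely when each $\xi_i$ is an idempotent of $\gamma S_i$, and that a sum of ``spread-out'' blocks over $F_0<\cdots<F_{m-1}$ computes the ultrafilter quantifier over this tensor product. Thus the order-preserving idempotent $\widehat\xi\in(\gamma\widehat S)_{\widehat\alpha}$ defined by $\widehat\xi(t_0,\dots,t_{m-1}):=\xi_{\lambda(0)}(t_0)\otimes\cdots\otimes\xi_{\lambda(m-1)}(t_{m-1})$ is the object to which Theorem~\ref{Theorem:monochromatic-action} is applied.

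The next step is to unwind what Theorem~\ref{Theorem:monochromatic-action}, applied to $\widehat\alpha$ and $\widehat\xi$ with the coloring $c$ of $\widehat S=S$, produces. It yields a sequence $(\widehat x_n)$ of functions $\widehat x_n:\widehat{\mathbb P}\to\widehat S$, and one reads off from the $s$-th coordinate of $\widehat x_n$ the desired functions $x_{\lambda(s),n}$; the slight bookkeeping issue is that the product formulation wants independent sequences $(x_{i,n})_{n\in\omega}$ indexed by $i\in k$ rather than one joint sequence indexed by $\widehat{\mathbb P}$, so I would instead run the whole argument with $\widehat{\mathbb P}$ replaced by the disjoint-union-style construction in which the $n$-th step over block $s$ only depends on coordinate $\lambda(s)$. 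Concretely, the cleanest route is: define $\widehat S$ and $\widehat\alpha$ as above, translate the given data $\psi_{i,n}^{(\mathcal F)},\psi_{i,n}^{(S)}$ into the functions $\widehat\psi_n^{(\mathcal F)},\widehat\psi_n^{(S)}$ required in Theorem~\ref{Theorem:monochromatic-action} (taking $\widehat\psi_n^{(\mathcal F)}$ to be the set of coordinatewise maps assembled from the $\psi_{i,j}^{(\mathcal F)}$, and $\widehat\psi_n^{(S)}$ to collect the ranges in each coordinate), verify the containment hypothesis on $\widehat\psi_n$, invoke Theorem~\ref{Theorem:monochromatic-action}, and then observe that the two conclusions of that theorem, when the nodes $t_i$ are chosen inside the product order and the ``chain'' is taken to be the image of a block $F_s$ in each factor, are exactly the two bulleted conclusions of Theorem~\ref{Theorem:product-action}.

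The main obstacle I expect is verifying that $\widehat\xi$ as defined lands in $(\gamma\widehat S)_{\widehat\alpha}$ and is an order-preserving idempotent there: one must check (i) that $\xi_{\lambda(0)}(t_0)\otimes\cdots\otimes\xi_{\lambda(m-1)}(t_{m-1})$ is a \emph{cofinal} ultrafilter on $\widehat S$ (this uses that each $S_i$ is an adequate partial semigroup and that the $\xi_i(t)$ are cofinal, together with the fact that $\varphi_{\widehat S}$ is compatible with products), (ii) that it is an idempotent of $\gamma\widehat S$ (the tensor product of idempotents is idempotent, which is precisely \cite[Corollary~2.8]{bergelson_polynomial_2014} in the total case and extends to the partial-semigroup setting by the same computation using the ultrafilter-quantifier definition of $+$ on $\gamma\widehat S$), (iii) that $\widehat\xi$ is equivariant, i.e.\ $\widehat\tau\circ\widehat\xi=\widehat\xi\circ f_{\widehat\tau}$, which reduces coordinatewise to the equivariance of each $\xi_i$, and (iv) that $\widehat\xi$ is order-preserving, which follows since $\otimes$ is monotone in each argument with respect to the order on closed subsemigroups and each $\xi_i$ is order-preserving. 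Once these four points are in place, the rest of the proof is the routine translation described above, and I would present it briskly, pointing to Theorem~\ref{Theorem:monochromatic-action} for the combinatorial heart of the argument.
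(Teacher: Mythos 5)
There is a genuine gap, and it sits at the core of your reduction. In step (ii) you assert that $\xi _{\lambda (0)}\left( t_{0}\right) \otimes \cdots \otimes \xi _{\lambda (m-1)}\left( t_{m-1}\right) $ is an idempotent of $\gamma \widehat{S}$ for the coordinatewise operation, citing \cite[Corollary 2.8]{bergelson_polynomial_2014}. That corollary (Theorem \ref{Theorem:BH-tensor} in this paper) asserts nothing of the sort: it is the Milliken--Taylor-type statement about increasing blocks, and it has to be proved by a bespoke recursion precisely because the tensor product of idempotents is in general \emph{not} idempotent. Writing out the quantifiers, $C\in \left( p\otimes q\right) +\left( p\otimes q\right) $ iff $\left( px_{1}\right) \left( qy_{1}\right) \left( px_{2}\right) \left( qy_{2}\right) $, $\left( x_{1}+x_{2},y_{1}+y_{2}\right) \in C$, and the occurrence of $\left( qy_{1}\right) $ \emph{before} $\left( px_{2}\right) $ destroys idempotency. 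Concretely, take $S_{0}=S_{1}=\mathrm{FIN}$ (finite nonempty subsets of $\omega $ under disjoint union), $p\in \gamma \mathrm{FIN}$ a cofinal idempotent, and $C=\left\{ \left( a,b\right) :\max a<\min b\right\} $. Every idempotent of $\gamma \mathrm{FIN}$ contains $\left\{ b:\min b>n\right\} $ for all $n$, so $C\in p\otimes p$; but $C\in \left( p\otimes p\right) +\left( p\otimes p\right) $ would require, for fixed $b_{1}$, that $\left\{ a_{2}:\max a_{2}<\min b_{1}\right\} \in p$, and this set is finite. So the element $\widehat{\xi }$ you propose to feed into Theorem \ref{Theorem:monochromatic-action} is not an idempotent, and the reduction collapses. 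Even if it were, the shape of the conclusion would be wrong: Theorem \ref{Theorem:monochromatic-action} applied to a product action returns sums over a \emph{single} increasing index set performed simultaneously in all $m$ coordinates, whereas Theorem \ref{Theorem:product-action} needs distinct, successively increasing blocks $F_{0}<\cdots <F_{m-1}$ in the different coordinates. Your closing remark about ``running the whole argument with a disjoint-union-style construction'' gestures at the right fix but leaves the actual mechanism unspecified.

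What the paper does instead is not a reduction but a reprise of the recursive scheme of Theorem \ref{Theorem:monochromatic-action} with hybrid inductive hypotheses: at stage $\ell $ one maintains statements $\left( 1_{\ell }\right) $ and $\left( 2_{\ell }\right) $ about tensor products in which the first $j+1$ factors have been replaced by actual block sums $\sum_{d\in F_{s}}^{\lambda (s)}\tau _{\lambda (s),d}\left( x_{\lambda (s),d}\left( t_{\lambda (s),d}\right) \right) $ while the remaining factors still carry the idempotents $\xi _{\lambda (s)}\left( h_{s}\right) $. The elements $x_{i,\ell +1}\left( t\right) $ are then chosen simultaneously for all $i\in k$ from a single set belonging to $\xi _{i}\left( t\right) $, and it is this interleaving, together with the idempotency of each individual $\xi _{i}\left( t\right) $ (used to absorb the extra factor $\xi _{\lambda (j)}(t_{\lambda (j),\ell })$ in $\left( 2_{\ell }\right) $ and $\left( 3_{\ell }\right) $), that manufactures the increasing-block structure. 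That idea is absent from your proposal and cannot be recovered from the false idempotency claim.
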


\begin{proof}
We define by recursion a $\ell \in \omega $ functions $x_{i,\ell }:\mathbb{P}%
_{i}\rightarrow S_{i}$ with $x_{i,\ell }\left( t\right) \in S_{i,t}\cap
(\varphi _{S_{i}}\circ \psi _{i,\ell }^{(S)})\left( x_{i,0},\ldots
,x_{i,\ell -1}\right) $ for $i\in k$ such that for every $\ell \in \omega $
the following holds:

\begin{enumerate}
\item[($1_{\ell }$)] for every $j\in m$, finite subsets $F_{0}<F_{1}<\cdots
<F_{j}<\ell $ of $\omega $, sequences $\left( t_{i,d}\right) _{d\in \ell }$
in $\mathbb{P}_{i}$ and $\left( \tau _{i,d}\right) _{d\in \ell }$ in $%
\mathcal{F}_{\alpha _{i}}$ such that $\tau _{i,d}\in \psi _{i,d}^{(\mathcal{F%
})}\left( x_{0},\ldots ,x_{d-1}\right) $ for every $d\in \ell $, and $%
h_{s}\in \mathbb{P}_{\lambda \left( s\right) }$ for $j<s<m$, if $\{f_{\tau
_{\lambda \left( s\right) ,d}}\left( t_{\lambda \left( s\right) ,d}\right)
:d\in F_{s}\}$ is a chain in $\mathbb{P}_{\lambda \left( s\right) }$ with
least element $t_{s}$ for every $s\leq j$, then the color of%
\begin{equation*}
\left( \sum\nolimits_{d\in F_{0}}^{\lambda \left( 0\right) }\tau _{\lambda
(0),d}\left( x_{\lambda (0),d}\left( t_{0}\right) \right) \right) \otimes
\cdots \otimes \left( \sum\nolimits_{d\in F_{j}}^{\lambda \left( j\right)
}\tau _{\lambda (j),d}\left( x_{\lambda (j),d}\left( t_{j}\right) \right)
\right) \otimes \xi _{\lambda (j+1)}\left( h_{j+1}\right) \otimes \cdots
\otimes \xi _{\lambda \left( m-1\right) }\left( h_{m-1}\right)
\end{equation*}%
(where we identify an element of $S_{i}$ with the corresponding principal
ultrafilter) is the same as the color of 
\begin{equation*}
\xi _{\lambda (0)}\left( t_{0}\right) \otimes \xi _{\lambda \left( 1\right)
}\left( t_{1}\right) \otimes \cdots \otimes \xi _{\lambda \left( j\right)
}\left( t_{j}\right) \otimes \xi _{\lambda \left( j+1\right) }\left(
h_{j+1}\right) \otimes \cdots \otimes \xi _{\lambda \left( m-1\right)
}\left( h_{m-1}\right) \text{,}
\end{equation*}%
and

\item[($2_{\ell }$)] for every $j\in m$, finite subsets $F_{0}<F_{1}<\cdots
<F_{j}<\ell $, sequences $\left( t_{i,d}\right) _{d\leq \ell }$ in $\mathbb{P%
}_{i}$ and $\left( \tau _{i,d}\right) _{d\in \ell }$ in $\mathcal{F}_{\alpha
_{i}}$ such that $\tau _{i,d}\in \psi _{i,d}^{(\mathcal{F})}\left(
x_{0},\ldots ,x_{d-1}\right) $ for every $d\in \ell $, and $h_{s}\in \mathbb{%
P}_{\lambda \left( s\right) }$ for $j<s<m$, if $\{f_{\tau _{\lambda \left(
s\right) ,d}}\left( t_{\lambda \left( s\right) ,d}\right) :d\in F_{s}\}$ is
a chain in $\mathbb{P}_{\lambda \left( s\right) }$ with least element $t_{s}$
for every $s\in j$ and $\{f_{\tau _{\lambda \left( j\right) ,d}}\left(
t_{\lambda \left( j\right) ,d}\right) :d\in F_{j}\}\cup \left\{ t_{\lambda
\left( j\right) ,\ell }\right\} $ is a chain in $\mathbb{P}_{\lambda \left(
j\right) }$ with least element $t_{j}$, then the color of%
\begin{eqnarray*}
&&\left( \sum\nolimits_{d\in F_{0}}^{\lambda \left( 0\right) }\tau _{\lambda
(0),d}\left( x_{\lambda (0),d}\left( t_{\lambda (0),d}\right) \right)
\right) \otimes \cdots \otimes \left( \sum\nolimits_{d\in F_{j-1}}^{\lambda
\left( j-1\right) }\tau _{\lambda (j-1),d}\left( x_{\lambda (j-1),d}\left(
t_{\lambda (j-1),d}\right) \right) \right) \otimes \\
&&\otimes \left( \sum\nolimits_{d\in F_{j}}^{\lambda \left( j\right) }\tau
_{\lambda (j),d}\left( x_{\lambda (j),d}\left( t_{\lambda (j),d}\right)
\right) +_{\lambda \left( j\right) }\xi _{\lambda \left( j\right)
}(t_{\lambda \left( j\right) ,\ell })\right) \otimes \xi _{\lambda
(j+1)}\left( h_{j+1}\right) \otimes \cdots \otimes \xi _{\lambda \left(
m-1\right) }\left( h_{m-1}\right)
\end{eqnarray*}%
is the same as the color of 
\begin{equation*}
\xi _{\lambda (0)}\left( t_{0}\right) \otimes \xi _{\lambda \left( 1\right)
}\left( t_{1}\right) \otimes \cdots \otimes \xi _{\lambda \left( j\right)
}\left( t_{j}\right) \otimes \xi _{\lambda \left( j+1\right) }\left(
h_{j+1}\right) \otimes \cdots \otimes \xi _{\lambda \left( m-1\right)
}\left( h_{m-1}\right) \text{,}
\end{equation*}
\end{enumerate}

Suppose that such a sequence has been defined up to $\ell $ in such a way
that ($1_{\ell }$) and ($2_{\ell }$) are satisfied. From ($2_{\ell }$) and
the fact that $\xi _{i}$ for $i\in k$ is an order-preserving idempotent in $%
\left( \gamma S_{i}\right) _{\alpha _{i}}$, it follows that the following
holds as well:

\begin{enumerate}
\item[($3_{\ell }$)] for every $j\in m$, finite subsets $F_{0}<F_{1}<\cdots
<F_{j}<\ell $, sequences $\left( t_{i,d}\right) _{d\leq \ell +1}$ in $%
\mathbb{P}_{i}$ and $\left( \tau _{i,d}\right) _{d\in \ell }$ in $\mathcal{F}%
_{\alpha _{i}}$ such that $\tau _{i,d}\in \psi _{i,d}^{(\mathcal{F})}\left(
x_{0},\ldots ,x_{d-1}\right) $ for every $d\leq \ell $, and $h_{s}\in 
\mathbb{P}_{\lambda \left( s\right) }$ for $j<i<m$, if $\{f_{\tau _{\lambda
\left( s\right) ,d}}\left( t_{\lambda \left( s\right) ,d}\right) :d\in
F_{s}\}$ is a chain in $\mathbb{P}_{\lambda \left( s\right) }$ with least
element $t_{s}$ for every $s\in j$, and $\{f_{\tau _{\lambda \left( j\right)
,d}}\left( t_{\lambda \left( j\right) ,d}\right) :d\in F_{j}\}\cup \left\{
t_{\lambda \left( j\right) ,\ell },t_{\lambda \left( j\right) ,\ell
+1}\right\} $ is a chain in $\mathbb{P}_{\lambda \left( j\right) }$ with
least element $t_{j}$, then the color of%
\begin{eqnarray*}
&&\left( \sum\nolimits_{d\in F_{0}}^{\lambda \left( 0\right) }\tau _{\lambda
(0),d}\left( x_{\lambda (0),d}\left( t_{\lambda (0),d}\right) \right)
\right) \otimes \cdots \otimes \sum\nolimits_{d\in F_{j-1}}^{\lambda \left(
j-1\right) }\tau _{\lambda (j-1),d}\left( x_{\lambda (j-1),d}\left(
t_{\lambda (j-1),d}\right) \right) \otimes \\
&&\otimes \left( \sum\nolimits_{d\in F_{j}}^{\lambda \left( j\right) }\tau
_{\lambda (j),d}\left( x_{\lambda (j),d}\left( t_{\lambda (j),d}\right)
\right) +_{\lambda \left( j\right) }\xi _{\lambda \left( j\right)
}(t_{\lambda \left( j\right) ,\ell })+_{\lambda \left( j\right) }\xi
_{\lambda \left( j\right) }\left( t_{\lambda \left( j\right) ,\ell
+1}\right) \right) \otimes \xi _{\lambda (j+1)}\left( h_{j+1}\right) \otimes
\cdots \otimes \xi _{\lambda \left( m-1\right) }\left( h_{m-1}\right)
\end{eqnarray*}%
is the same as the color of 
\begin{equation*}
\xi _{\lambda (0)}\left( t_{0}\right) \otimes \xi _{\lambda \left( 1\right)
}\left( t_{1}\right) \otimes \cdots \otimes \xi _{\lambda \left( j\right)
}\left( t_{j}\right) \otimes \xi _{\lambda \left( j+1\right) }\left(
h_{j+1}\right) \otimes \cdots \otimes \xi _{\lambda \left( m-1\right)
}\left( h_{m-1}\right) \text{,}
\end{equation*}
\end{enumerate}

Considering the definition of the operations between ultrafilters, one can
then see that, for every $i\in k$ and $t\in \mathbb{P}_{i}$,

\begin{itemize}
\item if follows from ($2_{m}$) that the set of possible choices of $%
x_{i,\ell +1}\left( t\right) \in S_{i,t}$ satisfying ($1_{\ell +1}$)
whenever $s\in m$, $\lambda \left( s\right) =i$, and $t_{\lambda \left(
s\right) ,\ell }=t$, belongs to $\xi _{i}\left( t\right) $, and

\item it follows from ($3_{m}$) that the set of possible choices of $%
x_{i,\ell +1}\left( t\right) \in S_{i,t}$ satisfying ($2_{\ell +1}$)
whenever $s\in m$, $\lambda \left( s\right) =i$, and $t_{\lambda \left(
s\right) ,\ell }=t$ belongs to $\xi _{i}\left( t\right) $.
\end{itemize}

Therefore one can choose $x_{i,m+1}:\mathbb{P}\rightarrow S_{i}$ with $%
x_{i,m+1}\left( t\right) \in S_{i,t}\cap (\varphi _{S_{i}}\circ \psi
_{i,m+1}^{(S)})\left( x_{i,0},\ldots ,x_{i,m}\right) $ for $i\in k$ such
that both ($1_{m+1}$) and ($2_{m+1}$) are satisfied for every $t\in \mathbb{P%
}$. This concludes the recursive construction.
\end{proof}

\begin{corollary}
\label{Corollary:product-action-tree}Suppose that $T_{i}$ for $i\in k$ are
finite rooted trees, and $\alpha _{i}$ for $i\in k$ is a Ramsey action of $%
T_{i}$ on the adequate partial semigroup $\left( S_{i},+_{i}\right) $. Set $%
S:=S_{\lambda \left( 0\right) }\times \cdots \times S_{\lambda \left(
m-1\right) }$ and suppose that $c$ is a finite coloring of $S$. Fix for $%
i\in k$ a sequence $(\psi _{i,n}^{(\mathcal{F})})$ of functions $\psi
_{i,n}^{(\mathcal{F})}:(S_{i}^{T_{i}})^{n}\rightarrow \left[ \mathcal{F}%
_{\alpha _{i}}\right] ^{<\aleph _{0}}$ and a sequence $(\psi _{i,n}^{(S)})$
of functions $\psi _{i,n}^{(S)}:(S_{i}^{T_{i}})^{n}\rightarrow \left[ S_{i}%
\right] ^{<\aleph _{0}}$ such that $\psi _{i,n}\left( x_{i,0},\ldots
,x_{i,n-1}\right) $ contains the range of $\tau _{j}\circ x_{j}$ for every $%
j\in n$ and $\tau _{j}\in \psi _{i,j}^{(\mathcal{F})}\left( x_{i,0},\ldots
,x_{i,j-1}\right) $. Then there exist sequences $\left( x_{i,n}\right)
_{n\in \omega }$ of functions $x_{i,n}:T_{i}\rightarrow S_{i}$ such that

\begin{itemize}
\item $x_{i,n}\left( t\right) \in S_{i,t}\cap (\varphi _{S_{i}}\circ \psi
_{i,n}^{(S)})\left( x_{i,0},\ldots ,x_{i,n-1}\right) $ for every $n\in
\omega $, $i\in k$, and $t\in T_{i}$; and

\item for any $i\in k$, finite subsets $F_{0}<F_{1}<\cdots <F_{m-1}$ of $%
\omega $, sequences $\left( t_{i,n}\right) _{n\in \omega }$ in $T_{i}$ and $%
\left( \tau _{i,n}\right) _{n\in \omega }$ in $\mathcal{F}_{\alpha _{i}}$
such that $\tau _{i,n}\in \psi _{i,n}^{(\mathcal{F})}\left( x_{0},\ldots
,x_{n-1}\right) $ for every $n\in \omega $, if $\{f_{\tau _{\lambda \left(
s\right) ,d}}\left( t_{i,d}\right) :d\in F_{\lambda \left( s\right) }\}$ is
a chain in $T_{\lambda \left( s\right) }$ with least element $t_{s}$ for
every $s\in m$, then the color of%
\begin{equation*}
\left( \sum\nolimits_{d\in F_{i}}^{\lambda \left( s\right) }\tau _{\lambda
(i),d}\left( x_{\lambda (i),d}\left( t_{\lambda (i),d}\right) \right)
\right) _{s\in m}
\end{equation*}%
depends only on $\left( t_{s}\right) _{s\in m}$.
\end{itemize}
\end{corollary}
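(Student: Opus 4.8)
The plan is to obtain this corollary as a direct specialization of Theorem \ref{Theorem:product-action}: the only input that must be supplied is an order-preserving idempotent $\xi_i\in(\gamma S_i)_{\alpha_i}$ for each $i\in k$, and this is furnished by the earlier results. Indeed, fix $i\in k$ and regard the finite rooted tree $T_i$ as an ordered set with its canonical ordering; being finite, $T_i$ has height $\leq\omega$. Since $\alpha_i$ is a Ramsey action of $T_i$ on the adequate partial semigroup $(S_i,+_i)$ in the sense of Definition \ref{Definition:Ramsey-action}, every spine $f_\tau$ with $\tau\in\mathcal{F}_{\alpha_i}$ is a regressive homomorphism, so the hypotheses of Theorem \ref{Theorem:monochromatic-tree-action} hold. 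By the implication (1)$\Rightarrow$(2) of that theorem, the action induced by $\alpha_i$ on the compact right topological semigroup $\gamma S_i$ is Ramsey; as its spines are the same regressive homomorphisms, it is a Ramsey action of the rooted tree $T_i$ on $\gamma S_i$ in the sense of Definition \ref{Definition:rooted-action-compact}. Proposition \ref{Proposition:tree}, applied to this induced action (with $T_i$ of height $\leq\omega$), then yields an order-preserving idempotent $\xi_i\in(\gamma S_i)_{\alpha_i}$.

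Next I would invoke Theorem \ref{Theorem:product-action} with $\mathbb{P}_i:=T_i$, with the order-preserving idempotents $\xi_i$ just produced, with the given coloring $c$ extended canonically to $\beta S$, and with the given sequences $(\psi_{i,n}^{(\mathcal{F})})$ and $(\psi_{i,n}^{(S)})$; the compatibility conditions imposed on the $\psi$-functions in the statement of the corollary are exactly those required by Theorem \ref{Theorem:product-action} once $S_i^{T_i}$ is identified with $S_i^{\mathbb{P}_i}$. This produces sequences $(x_{i,n})_{n\in\omega}$ of functions $x_{i,n}:T_i\to S_i$ satisfying $x_{i,n}(t)\in S_{i,t}\cap(\varphi_{S_i}\circ\psi_{i,n}^{(S)})(x_{i,0},\ldots,x_{i,n-1})$ --- which is the first clause of the corollary --- together with the property that for every $\ell\in\omega$, all finite sets $F_0<\cdots<F_{m-1}$ of $\omega$, and all sequences $(t_{i,n})$ and $(\tau_{i,n})$ as in the statement of Theorem \ref{Theorem:product-action}, if $\{f_{\tau_{\lambda(s),d}}(t_{\lambda(s),d}):d\in F_s\}$ is a chain in $T_{\lambda(s)}$ with least element $t_s$ for each $s\in m$, then the $c$-color of $(\sum_{d\in F_s}^{\lambda(s)}\tau_{\lambda(s),d}(x_{\lambda(s),d}(t_{\lambda(s),d})))_{s\in m}$ equals the $c$-color of $\xi_{\lambda(0)}(t_0)\otimes\cdots\otimes\xi_{\lambda(m-1)}(t_{m-1})$. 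Since an arbitrary finite family $F_0<\cdots<F_{m-1}$ of subsets of $\omega$ is covered by taking $\ell$ above all of their elements, the second clause of the corollary follows once we observe that each $\xi_i$ is one fixed function $T_i\to\gamma S_i$, so that the ultrafilter $\xi_{\lambda(0)}(t_0)\otimes\cdots\otimes\xi_{\lambda(m-1)}(t_{m-1})$, and hence its $c$-color, depends only on the tuple $(t_s)_{s\in m}$.

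I do not anticipate any genuine obstacle: the argument is a routine unwinding of definitions together with applications of Proposition \ref{Proposition:tree}, Theorem \ref{Theorem:monochromatic-tree-action}, and Theorem \ref{Theorem:product-action}. The only points that require a moment's attention are that a finite rooted tree counts as a rooted tree of height $\leq\omega$ (so that both Proposition \ref{Proposition:tree} and the tree case of Theorem \ref{Theorem:product-action} apply), and that the regressive-homomorphism condition needed to view the induced action on $\gamma S_i$ as a Ramsey action in the sense of Definition \ref{Definition:rooted-action-compact} is already part of the definition of a Ramsey action of a tree on a partial semigroup.
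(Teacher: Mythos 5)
Your proposal is correct and follows exactly the route the paper intends: the corollary is deduced from Theorem \ref{Theorem:product-action} with $\mathbb{P}_i=T_i$, after producing the required order-preserving idempotents $\xi_i\in(\gamma S_i)_{\alpha_i}$ via the implication (1)$\Rightarrow$(2) of Theorem \ref{Theorem:monochromatic-tree-action} together with Proposition \ref{Proposition:tree}. The final observation that the color of the tensor product $\xi_{\lambda(0)}(t_0)\otimes\cdots\otimes\xi_{\lambda(m-1)}(t_{m-1})$ depends only on $(t_s)_{s\in m}$ is precisely the step that converts the theorem's conclusion into the corollary's, so nothing is missing.
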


\section{Examples of actions of trees on partial semigroups\label%
{Section:examples}}

\subsection{Gowers' theorem for multiple tetris operations\label%
{Subsection:G}}

Suppose that $T$ is a finite rooted tree with root $r$, and let $T^{+}$ be
the set of nodes of $T$ different from the root. We regard as above $T$ as
an ordered set with respect to its canonical tree ordering. Let $\mathrm{%
\mathrm{FI}N}^{T}$ be the space of functions $b:\mathrm{dom}\left( b\right)
\rightarrow T^{+}$ where $\mathrm{dom}\left( b\right) $ is a finite
(possibly empty) subset of $\omega $ and the range of $b$ is contained in a
branch of $T$. Then $\mathrm{FIN}^{T}$ is a partial semigroup with respect
of the partially defined operation $\left( b_{0},b_{1}\right) \mapsto
b_{0}+b_{1}$, which is defined whenever the intersection of the domains of $%
b_{0},b_{1}$ is empty.\ In this case, $b_{0}+b_{1}$ is defined to be the
union of $b_{0}$ and $b_{1}$ (where we identify functions with their graph).

There is a natural action of $T$ on $\mathrm{\mathrm{FIN}}^{T}$ defined as
follows. For every $t\in T^{+}$ let $\mathrm{\mathrm{FIN}}_{t}^{T}$ be the
space of $b\in \mathrm{\mathrm{FIN}}^{T}$ such that the range of $b$ is a
chain in $T$ with least element $t$. We also let $\mathrm{FIN}_{r}^{T}$ be
the set containing only the empty function. Any regressive homomorphism $%
f:T\rightarrow T$ defines a canonical adequate partial semigroup
homomorphism $\tau _{f}:\mathrm{FIN}^{T}\rightarrow \mathrm{FIN}^{T}$ as
follows. Suppose that $b\in \mathrm{FIN}^{T}$. Then $\tau _{f}\left(
b\right) $ has domain $\left\{ n\in \mathrm{dom}\left( b\right) :f\left(
b\left( n\right) \right) \in T^{+}\right\} $ and it is defined by $\tau
_{f}\left( b\right) :n\mapsto f\left( b\left( n\right) \right) $. Observe
that $\tau _{f}$ maps $\mathrm{FIN}_{t}^{T}$ to $\mathrm{FIN}_{f\left(
t\right) }^{T}$ for every $t\in T$. Therefore $\tau _{f}$ has \emph{spine }$%
f $ according to Definition \ref{Definition:action-partial}. The collection $%
\mathcal{F}_{\alpha }$ of adequate partial semigroups homomorphisms $\tau
_{f}$ where $f$ varies among all the regressive homomorphisms $%
f:T\rightarrow T$ is an action of $T$ on the adequate partial semigroup $%
\mathrm{FIN}^{T}$ according to Definition \ref{Definition:action-partial}.
It is easily seen that such an action is Ramsey as in Definition \ref%
{Definition:Ramsey-action}. Indeed consider a finite subset $S_{0}$ of $%
\mathrm{\mathrm{FI}N}^{T}$ and a finite coloring $c$ of $\mathrm{FIN}^{T}$.
Fix any nonempty finite subset $A$ of $\omega $ disjoint from the union of
the supports of the elements of $S_{0}$. Consider then the function $%
x:T\rightarrow \mathrm{FIN}^{T}$, $t\mapsto b_{t}$ defined by setting $%
b_{t}\in \mathrm{FIN}_{t}^{T}$ to be the function with domain $A$ and
constantly equal to $t$. This witnesses that such an action is Ramsey. From
Theorem \ref{Theorem:monochromatic-tree-action} one can deduce the following.

\begin{theorem}
\label{Theorem:monochromatic-tree-actions}Suppose that $T$ is a finite
rooted tree. For any finite coloring $c$ of $\mathrm{FIN}^{T}$, for any
sequence $\left( d_{n}\right) $ of elements of $\omega $, there exists
sequences $\left( b_{t,n}\right) _{n\in \omega }$ for $t\in T$ of elements $%
b_{t,n}\in \mathrm{FIN}_{t}^{T}$ such that

\begin{itemize}
\item $\mathrm{dom}\left( b_{t,n}\right) >d_{n}$ and $\mathrm{dom}\left(
b_{t,n+1}\right) >\mathrm{dom}\left( b_{t,n}\right) $ for every $t\in T$ and 
$n\in \omega $;

\item for any $\ell \in \omega $, $n_{0}<n_{1}<\cdots <n_{\ell }\in \omega $%
, $t_{0},\ldots ,t_{\ell }\in T$, and regressive homomorphisms $%
f_{i}:T\rightarrow T$ for $i\leq \ell $, if $\left\{ f_{i}\left(
t_{i}\right) :i\leq \ell \right\} $ is a chain in $T$ with least element $t$%
, then the color of $\tau _{f_{0}}(b_{t_{0},n_{0}})+\cdots +\tau _{f_{\ell
}}(b_{t_{\ell },n_{\ell }})$ depends only on $t$.
\end{itemize}
\end{theorem}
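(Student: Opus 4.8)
The plan is to obtain this statement as a special case of Theorem~\ref{Theorem:monochromatic-tree-action}, applied to the Ramsey action $\alpha$ of $T$ on $\mathrm{FIN}^{T}$ constructed just above. Since $\alpha$ has already been observed to be Ramsey, Theorem~\ref{Theorem:monochromatic-tree-action} tells us that $\alpha$ satisfies the strongest among its equivalent formulations---the one producing, for \emph{any} admissible pair of auxiliary sequences $(\psi_{n}^{(\mathcal{F})})$, $(\psi_{n}^{(S)})$, a sequence $(x_{n})$ of functions $T\to\mathrm{FIN}^{T}$ with $x_{n}(t)\in\mathrm{FIN}_{t}^{T}\cap(\varphi_{\mathrm{FIN}^{T}}\circ\psi_{n}^{(S)})(x_{0},\dots,x_{n-1})$ and with the displayed monochromaticity property. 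So the whole argument reduces to choosing $(\psi_{n}^{(\mathcal{F})})$ and $(\psi_{n}^{(S)})$ so cleverly that the two domain conditions, $\mathrm{dom}(b_{t,n})>d_{n}$ and $\mathrm{dom}(b_{t,n+1})>\mathrm{dom}(b_{t,n})$, come out automatically once we set $b_{t,n}:=x_{n}(t)$; the coloring clause is then literally the conclusion of the theorem, after identifying each $\tau_{f_{i}}$ with its spine $f_{i}$.

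Concretely, I would let $\psi_{n}^{(\mathcal{F})}$ be the constant function with value the (finite, since $T$ is finite) set $\mathcal{F}_{0}$ of all $\tau_{f}$ with $f$ a regressive homomorphism of $T$; note $\mathcal{F}_{0}$ contains the identity. For $\psi_{n}^{(S)}(x_{0},\dots,x_{n-1})$ I would take the finite set consisting of all elements $\tau(x_{i}(t))$ with $i<n$, $t\in T$, $\tau\in\mathcal{F}_{0}$ (so the admissibility hypothesis of Theorem~\ref{Theorem:monochromatic-tree-action} is met), \emph{together with} one extra padding element $e_{n}\in\mathrm{FIN}^{T}$ having domain the initial segment $\{0,1,\dots,M_{n}\}$, where $M_{n}:=\max(\{d_{n}\}\cup\bigcup_{i<n,\,t\in T}\mathrm{dom}(x_{i}(t)))\in\omega$, and range a single fixed node of $T^{+}$ (the degenerate case $T=\{r\}$, in which $\mathrm{FIN}^{T}=\{\varnothing\}$, is trivial and set aside). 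Because the domain of every $\tau(x_{i}(t))$ appearing is contained in $\{0,\dots,M_{n}\}=\mathrm{dom}(e_{n})$, one checks that $\varphi_{\mathrm{FIN}^{T}}$ of this set is exactly $\{b\in\mathrm{FIN}^{T}:\min\mathrm{dom}(b)>M_{n}\}$ (with $\min\varnothing=+\infty$).

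Feeding these data into Theorem~\ref{Theorem:monochromatic-tree-action} yields functions $x_{n}\colon T\to\mathrm{FIN}^{T}$ with $x_{n}(t)\in\mathrm{FIN}_{t}^{T}$, $\min\mathrm{dom}(x_{n}(t))>M_{n}$ for all $t,n$, and the required monochromaticity. Setting $b_{t,n}:=x_{n}(t)$: the inequality $M_{n}\ge d_{n}$ gives $\mathrm{dom}(b_{t,n})>d_{n}$, while $M_{n+1}\ge\max\mathrm{dom}(x_{n}(t))$ forces $\min\mathrm{dom}(b_{t,n+1})>\max\mathrm{dom}(b_{t,n})$, i.e.\ $\mathrm{dom}(b_{t,n+1})>\mathrm{dom}(b_{t,n})$; and the coloring statement is verbatim the output of the theorem. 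I expect the only genuinely delicate point to be the bookkeeping with $\varphi_{\mathrm{FIN}^{T}}$: the theorem only permits demanding that $x_{n}(t)$ be \emph{disjoint} from a prescribed finite list of elements of $\mathrm{FIN}^{T}$, so the whole trick is to realize the order conditions ``$>d_{n}$'' and ``beyond all earlier domains'' as disjointness from a single padding element whose domain is a long enough initial segment of $\omega$. Verifying that $e_{n}$ is a legitimate element of $\mathrm{FIN}^{T}$, that $\mathcal{F}_{0}$ is finite, and that the admissibility requirement holds, is routine.
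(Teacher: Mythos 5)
Your proposal is correct and follows exactly the route the paper intends: the paper derives this theorem by applying Theorem~\ref{Theorem:monochromatic-tree-action} to the Ramsey action of $T$ on $\mathrm{FIN}^{T}$ described just before it, and your choice of the auxiliary functions $\psi_{n}^{(S)}$ (with the padding element $e_{n}$ encoding the constraints $\mathrm{dom}(b_{t,n})>d_{n}$ and $\mathrm{dom}(b_{t,n+1})>\mathrm{dom}(b_{t,n})$ as disjointness conditions via $\varphi_{\mathrm{FIN}^{T}}$) supplies precisely the routine bookkeeping the paper leaves implicit. The only cosmetic caveat is that you need only the inclusion $\varphi_{\mathrm{FIN}^{T}}(\psi_{n}^{(S)}(\cdots))\subseteq\{b:\min\mathrm{dom}(b)>M_{n}\}$, not equality, which is all your argument actually uses.
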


From Corollary \ref{Corollary:product-action-tree} one can deduce the
following generalization of Theorem \ref{Theorem:monochromatic-tree-actions}.

\begin{theorem}
\label{Theorem:product-tree-action}Suppose that $m,k\in \mathbb{N}$ and $%
\lambda :m\rightarrow k$ is a function. Let $T_{i}$ for $i\in k$ be a finite
rooted tree. Fix any coloring $c$ of $\mathrm{FIN}^{T_{\lambda \left(
0\right) }}\times \cdots \times \mathrm{FIN}^{T_{\lambda \left( m-1\right)
}} $. For any sequence $\left( d_{n}\right) $ of elements of $\omega $,
there exist sequences $\left( b_{i,t,n}\right) _{n\in \omega }$ in $\mathrm{%
FIN}_{t}^{T_{i}}$ for $i\in k$ and $t\in T_{i}$ such that

\begin{itemize}
\item $\mathrm{dom}\left( b_{i,t,n}\right) >d_{n}$ and $\mathrm{dom}\left(
b_{i,t,n+1}\right) >\mathrm{dom}\left( b_{i,t,n}\right) $ for every $i\in k$%
, $t\in T_{i}$, and $n\in \omega $;

\item for finite subsets $F_{0}<F_{1}<\cdots <F_{m-1}$ of $\omega $, nodes $%
t_{i,d}$ of $T_{i}$ and regressive homomomrphisms $f_{i,d}$ of $T_{i}$ for $%
d\in \omega $ such that $\left\{ f_{\lambda \left( s\right) ,d}\left(
t_{\lambda \left( s\right) ,d}\right) :d\in F_{s}\right\} $ is a chain in $%
T_{\lambda \left( s\right) }$ with least element $t_{s}$ for every $s\in m$,
the color of 
\begin{equation*}
\left( \sum_{d\in F_{s}}\tau _{f_{\lambda \left( s\right) ,d}}(b_{f\left(
s\right) ,t_{\lambda \left( s\right) },d})\right) _{s\in m}
\end{equation*}%
depends only on $\left( t_{s}\right) _{s\in m}$.
\end{itemize}
\end{theorem}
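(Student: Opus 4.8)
The plan is to deduce this from Corollary~\ref{Corollary:product-action-tree}, applied to the natural Ramsey actions $\alpha_{i}$ of $T_{i}$ on $\mathrm{FIN}^{T_{i}}$ constructed above, with the auxiliary functions $\psi_{i,n}^{(\mathcal{F})}$ and $\psi_{i,n}^{(S)}$ chosen so that the conclusion of the corollary already incorporates the required growth of the domains. We may assume $T_{i}^{+}\neq\emptyset$ for every $i\in k$, and fix a node $t_{i}^{\ast}\in T_{i}^{+}$; if $T_{i}^{+}=\emptyset$ then $\mathrm{FIN}^{T_{i}}=\{\emptyset\}$ and the $i$-th coordinate of both the hypotheses and the conclusions is trivial.

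For $i\in k$ and $n\in\omega$ put $\psi_{i,n}^{(\mathcal{F})}(x_{i,0},\dots,x_{i,n-1}):=\mathcal{F}_{\alpha_{i}}=\{\tau_{f}:f\text{ a regressive homomorphism of }T_{i}\}$, which is a finite subset of $\mathrm{End}(\mathrm{FIN}^{T_{i}})$ since $T_{i}$ is finite. Let $N_{i,n}:=\max\bigl(\{d_{n}\}\cup\bigcup_{j<n,\,t\in T_{i}}\mathrm{dom}(x_{i,j}(t))\bigr)$, let $e_{i,n}\in\mathrm{FIN}^{T_{i}}$ be the function with domain $\{0,1,\dots,N_{i,n}\}$ that is constantly equal to $t_{i}^{\ast}$, and let $\psi_{i,n}^{(S)}(x_{i,0},\dots,x_{i,n-1})$ be the union of $\{e_{i,n}\}$ with the ranges of all maps $\tau\circ x_{i,j}$ for $\tau\in\mathcal{F}_{\alpha_{i}}$ and $j<n$. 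By construction this satisfies the hypothesis on $\psi_{i,n}$ in Corollary~\ref{Corollary:product-action-tree}, and, since $\varphi_{\mathrm{FIN}^{T_{i}}}$ of a set containing $e_{i,n}$ consists of functions whose domain is disjoint from $\{0,\dots,N_{i,n}\}$, every $b\in(\varphi_{\mathrm{FIN}^{T_{i}}}\circ\psi_{i,n}^{(S)})(x_{i,0},\dots,x_{i,n-1})$ satisfies $\mathrm{dom}(b)>N_{i,n}\geq d_{n}$ and $\mathrm{dom}(b)>\mathrm{dom}(x_{i,j}(t))$ for all $j<n$ and $t\in T_{i}$.

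Now apply Corollary~\ref{Corollary:product-action-tree} with the actions $\alpha_{i}$ and these sequences $(\psi_{i,n}^{(\mathcal{F})})$, $(\psi_{i,n}^{(S)})$, obtaining functions $x_{i,n}:T_{i}\to\mathrm{FIN}^{T_{i}}$ with the two stated properties, and set $b_{i,t,n}:=x_{i,n}(t)$. For the action $\alpha_{i}$ one has $S_{i,t}=\mathrm{FIN}_{t}^{T_{i}}$, so the first bullet of the corollary gives $b_{i,t,n}\in\mathrm{FIN}_{t}^{T_{i}}$, while the previous paragraph gives $\mathrm{dom}(b_{i,t,n})>d_{n}$ and $\mathrm{dom}(b_{i,t,n+1})>\mathrm{dom}(b_{i,t,n})$; this is the first assertion. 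For the second, given $F_{0}<\dots<F_{m-1}$, nodes $t_{i,d}\in T_{i}$ and regressive homomorphisms $f_{i,d}$ of $T_{i}$ with $\{f_{\lambda(s),d}(t_{\lambda(s),d}):d\in F_{s}\}$ a chain in $T_{\lambda(s)}$ with least element $t_{s}$ for every $s\in m$, set $\tau_{i,d}:=\tau_{f_{i,d}}\in\mathcal{F}_{\alpha_{i}}=\psi_{i,d}^{(\mathcal{F})}(x_{i,0},\dots,x_{i,d-1})$; since the spine of $\tau_{f_{i,d}}$ is $f_{i,d}$, the hypothesis of the second bullet of the corollary holds, and its conclusion says precisely that the color of $\bigl(\sum_{d\in F_{s}}\tau_{f_{\lambda(s),d}}(b_{\lambda(s),t_{\lambda(s),d},d})\bigr)_{s\in m}$ depends only on $(t_{s})_{s\in m}$.

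The whole argument is a translation of the abstract result into the concrete partial semigroups $\mathrm{FIN}^{T_{i}}$, so no serious obstacle is expected; the one point requiring attention is the design of $\psi_{i,n}^{(S)}$, where one inserts the auxiliary element $e_{i,n}$ of large prescribed domain precisely so that the admissible values in $\varphi_{\mathrm{FIN}^{T_{i}}}\circ\psi_{i,n}^{(S)}$ are forced to have domains both growing with $n$ and exceeding the thresholds $d_{n}$, while still containing the ranges of the maps $\tau\circ x_{i,j}$ demanded by the hypotheses of Corollary~\ref{Corollary:product-action-tree}.
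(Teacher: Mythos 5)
Your proof is correct and follows exactly the route the paper intends: the paper derives Theorem \ref{Theorem:product-tree-action} by simply invoking Corollary \ref{Corollary:product-action-tree} for the Ramsey actions of $T_{i}$ on $\mathrm{FIN}^{T_{i}}$ described in Subsection \ref{Subsection:G}, leaving the choice of the auxiliary functions implicit. Your explicit construction of $\psi_{i,n}^{(S)}$ via the element $e_{i,n}$ of domain $\{0,\dots,N_{i,n}\}$ is precisely the standard way to force the domain conditions $\mathrm{dom}(b_{i,t,n})>d_{n}$ and $\mathrm{dom}(b_{i,t,n+1})>\mathrm{dom}(b_{i,t,n})$ out of the membership in $\varphi_{S_{i}}\circ\psi_{i,n}^{(S)}$, so the argument is a faithful (and more detailed) version of the paper's.
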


Gowers' theorem for multiple tetris operations \cite[Theorem 1.1]%
{lupini_gowers_2016} is the particular instance of Theorem \ref%
{Theorem:monochromatic-tree-action} where $T$ is the rooted tree $I_{k}$
with $k+1$ nodes such that the canonical ordering on $I_{k}$ is a linear
order. In other words, $I_{k}$ is just the tree whose nodes are the initial
segments of $k$ ordered by reverse inclusion. The original version of
Gowers' theorem \cite{gowers_lipschitz_1992} corresponds to the case when
the only regressive homomomorphisms of $I_{k}$ considered are the ones
mapping a node its $j$-predecessor for some $j\geq 0$ (with the convention
that the $j$-th predecessor of a node of height at most $j$ is equal to the
root).

\subsection{The Hales-Jewett theorem for located words\label{Subsection:HJ}}

Let $L$ be a set (\emph{alphabet}) and $v$ a symbol not in $L$ (\emph{%
variable}). A \emph{located word }in the alphabet $L$ is a finitely
supported function $b:\mathrm{dom}\left( b\right) \rightarrow L$ where $%
\mathrm{dom}\left( b\right) $ is a (possibly empty) finite subset of $\omega 
$. Similarly, a \emph{located variable word }in the alphabet $L$ and
variable $v$ is a finitely supported function $b:\mathrm{dom}\left( b\right)
\rightarrow L\cup \left\{ v\right\} $ whose range contains $v$, where $%
\mathrm{dom}\left( b\right) $ is a finite subset of $\omega $. Following 
\cite[Section 2.6]{todorcevic_introduction_2010}, we let $\mathrm{FIN}_{L}$
be the set of located words in $L$, and $\mathrm{FIN}_{Lv}$ be the set of
located variable words in $L$ and the variable $v$. Then $S:=\mathrm{FIN}%
_{L}\cup \mathrm{FIN}_{Lv}$ has a natural partial semigroup operation,
obtained by letting $b_{0}+b_{1}$ be defined whenever the domains of $b_{0}$
and $b_{1}$ are disjoint. In such a case, $b_{0}+b_{1}$ is just $b_{0}\cup
b_{1}\ $(where we identify a function with its graph). It is clear that $%
\mathrm{FIN}_{L}$ is an adequate subsemigroup of $S$, while $\mathrm{FIN}%
_{Lv}$ is an adequate ideal of $S$.

Let $I_{1}$ be the rooted tree with only two nodes (including the root). We
denote the root of $I_{1}$ by $r$, and the other node by $t$. Every element $%
a$ of $L$ defines an adequate semigroup homomorphism $\tau _{a}:S\rightarrow
S$ obtained by replacing every occurrence of the variable $v$ with $a$. This
defines an action $\alpha $ of $I_{1}$ on $S$, where the closed subsemigroup
corresponding to the root $r$ of $I_{1}$ is $\mathrm{FIN}_{L}$, and the
closed subsemigroup corresponding to the other node $t$ of $I_{1}$ is $%
\mathrm{FIN}_{Lv}$. In order to see that such an action is Ramsey in the
sense of Definition \ref{Definition:Ramsey-action}, consider any minimal
idempotent $p_{r}$ of $\gamma \mathrm{FIN}_{L}$. Then by Lemma \ref%
{Lemma:minimal-lift} there exists a minimal idempotent $p_{t}$ of $\gamma 
\mathrm{FIN}_{Lv}$ such that $p_{t}\leq p_{r}$. By minimality of $p_{r}$ we
deduce that $\tau _{a}\left( p_{t}\right) =p_{r}$ for every $a\in L$.
Therefore the pair $\left( p_{r},p_{t}\right) $ witnesses that $\left(
\gamma S\right) _{\alpha }$ is nonempty by Theorem \ref%
{Theorem:monochromatic-tree-action}.

The infinite Hales-Jewett theorem for located words \cite[Theorem 6.1]%
{bergelson_partition_1994}---see also \cite[Theorem 2.40]%
{todorcevic_introduction_2010}---is then a consequence of Theorem \ref%
{Theorem:monochromatic-tree-action} applied to such a Ramsey action $\alpha $%
. From Corollary \ref{Corollary:product-action-tree} one can deduce the
following version.

\begin{theorem}
\label{Theorem:product-HJ}Suppose that $m,k\in \mathbb{N}$ and $\lambda
:m\rightarrow k$ is a function. Let $L_{i}$ for $i\in k$ be a set. Fix a
coloring of $\mathrm{FIN}_{L_{\lambda \left( 0\right) }}\times \cdots \times 
\mathrm{FIN}_{L_{\lambda (m-1)}}$. For any sequence $\left( d_{n}\right) $
of elements of $\omega $, and sequences $\left( L_{i,n}\right) _{n\in \omega
}$ of finite subsets of $L_{i}$ for $i\in k$, there exists sequences $\left(
b_{i,n}\right) _{n\in \omega }$ in $\mathrm{FIN}_{L_{i},v}$ for $i\in k$
such that

\begin{itemize}
\item $\mathrm{dom}\left( b_{i,n}\right) >d_{n}$ and $\mathrm{dom}\left(
b_{i,n+1}\right) >\mathrm{\mathrm{do}m}\left( b_{i,n}\right) $ for every $%
i\in k$ and $n\in \omega $;

\item the set of elements of $\mathrm{FIN}_{L_{\lambda \left( 0\right)
}}\times \cdots \times \mathrm{FIN}_{L_{\lambda (m-1)}}$ of the form%
\begin{equation*}
\left( \sum_{d\in F_{s}}\tau _{a_{f\left( s\right) ,d}}(b_{f\left( s\right)
,d})\right) _{s\in m}
\end{equation*}%
for $\ F_{0}<F_{1}<\cdots <F_{m-1}$ and $a_{i,d}\in L_{i}$ for $d\in \omega $
and $i\in k$ is monochromatic.
\end{itemize}
\end{theorem}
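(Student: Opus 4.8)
The plan is to deduce the theorem from Corollary~\ref{Corollary:product-action-tree} applied to the Ramsey action $\alpha_i$ of the two-node tree $I_1$ on $S_i:=\mathrm{FIN}_{L_i}\cup\mathrm{FIN}_{L_i,v}$ obtained by running the construction above with $L_i$ in place of $L$: the subsemigroup attached to the root $r$ is $\mathrm{FIN}_{L_i}$, the one attached to the other node $t$ is $\mathrm{FIN}_{L_i,v}$, and $\mathcal{F}_{\alpha_i}$ consists of the substitution homomorphisms $\tau_a$, $a\in L_i$, together with the identity. Recall that $\tau_a$ maps $\mathrm{FIN}_{L_i,v}$ into $\mathrm{FIN}_{L_i}$, so its spine is the regressive homomorphism of $I_1$ collapsing $t$ to $r$, while the spine of the identity is the identity; these are the only regressive homomorphisms of $I_1$. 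It was checked before the statement that $\alpha_i$ is Ramsey.

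First I would choose the auxiliary data required by Corollary~\ref{Corollary:product-action-tree} so that it encodes the two support-growth conditions of the theorem. For $i\in k$ and $n\in\omega$ put $\psi^{(\mathcal{F})}_{i,n}:=\{\tau_a:a\in L_{i,n}\}$, which is a legitimate element of $[\mathcal{F}_{\alpha_i}]^{<\aleph_0}$ precisely because each $L_{i,n}$ is finite, and let $\psi^{(S)}_{i,n}(x_{i,0},\dots,x_{i,n-1})$ be the union of the ranges of $\tau\circ x_{i,j}$ over $j<n$ and $\tau\in\psi^{(\mathcal{F})}_{i,j}(x_{i,0},\dots,x_{i,j-1})$ (as the corollary demands), augmented by one extra word $c_{i,n}\in\mathrm{FIN}_{L_i}$ whose domain is the interval $\{0,1,\dots,N_{i,n}\}$, with $N_{i,n}\geq d_n$ and $N_{i,n}\geq\max\mathrm{dom}(x_{i,j}(t))$ for every $j<n$.

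Applying Corollary~\ref{Corollary:product-action-tree} with $T_i=I_1$ yields functions $x_{i,n}:I_1\to S_i$ with $x_{i,n}(t')\in S_{i,t'}\cap(\varphi_{S_i}\circ\psi^{(S)}_{i,n})(x_{i,0},\dots,x_{i,n-1})$ and the stated color-invariance property. Set $b_{i,n}:=x_{i,n}(t)\in\mathrm{FIN}_{L_i,v}$. Since in $S_i$ one has $w'\in\varphi_{S_i}(A)$ iff $\mathrm{dom}(w')$ is disjoint from $\mathrm{dom}(w)$ for all $w\in A$, membership of $b_{i,n}$ in $\varphi_{S_i}(\psi^{(S)}_{i,n}(\dots))$ forces $\mathrm{dom}(b_{i,n})\cap\{0,\dots,N_{i,n}\}=\emptyset$, hence $\mathrm{dom}(b_{i,n})>d_n$ and $\mathrm{dom}(b_{i,n})>\mathrm{dom}(b_{i,n-1})$; in particular the domains are increasing in $n$, so every sum occurring below is defined and lands in the appropriate $\mathrm{FIN}_{L_i}$. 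For monochromaticity, given $F_0<\cdots<F_{m-1}$ and letters $a_{i,d}\in L_{i,d}$, invoke the second bullet of Corollary~\ref{Corollary:product-action-tree} with $t_{i,d}:=t$ and $\tau_{i,d}:=\tau_{a_{i,d}}\in\psi^{(\mathcal{F})}_{i,d}$ for all $i,d$. Because $f_{\tau_{a_{i,d}}}(t)=r$, each set $\{f_{\tau_{\lambda(s),d}}(t_{\lambda(s),d}):d\in F_s\}$ equals $\{r\}$, a chain with least element $t_s=r$; thus the color of $\bigl(\sum_{d\in F_s}\tau_{a_{\lambda(s),d}}(b_{\lambda(s),d})\bigr)_{s\in m}$ depends only on $(t_s)_{s\in m}=(r,\dots,r)$, and is therefore the same for all admissible choices of the $F_s$ and the $a_{i,d}$.

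The only genuine work is in the first step: one must check that $\mathrm{dom}(\tau_a(b))=\mathrm{dom}(b)$, so that the ranges forced into $\psi^{(S)}_{i,n}$ really do control the supports, and keep in mind that the corollary only requires $\psi^{(S)}_{i,n}$ to \emph{contain} the relevant ranges, leaving room for the padding word $c_{i,n}$. Everything else is the translation of Corollary~\ref{Corollary:product-action-tree} through the dictionary between the action $\alpha_i$ and located variable words, exactly as in the non-product case sketched before the statement.
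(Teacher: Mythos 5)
Your proposal is correct and follows exactly the paper's route: the paper likewise obtains Theorem \ref{Theorem:product-HJ} by applying Corollary \ref{Corollary:product-action-tree} to the Ramsey action of the two-node tree $I_{1}$ on $\mathrm{FIN}_{L_{i}}\cup \mathrm{FIN}_{L_{i}v}$ described in Subsection \ref{Subsection:HJ}, leaving all the bookkeeping implicit. Your choices of $\psi _{i,n}^{(\mathcal{F})}=\{\tau _{a}:a\in L_{i,n}\}$ and of the padded $\psi _{i,n}^{(S)}$ correctly encode the conditions $\mathrm{dom}(b_{i,n})>d_{n}$ and $\mathrm{dom}(b_{i,n+1})>\mathrm{dom}(b_{i,n})$ via $\varphi _{S_{i}}$, and your reading $a_{i,d}\in L_{i,d}$ is the evidently intended one, since otherwise the sequences $(L_{i,n})$ would play no role in the statement.
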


More generally, one can regard any\emph{\ layered action} on a partial
semigroup $S$ in the sense of \cite[Definition 3.3]{farah_partition_2002} as
an action of the tree $I_{k}$ on $S$ in the sense of Definition \ref%
{Definition:action-partial}. Any such an action is automatically Ramsey,
although this is not easy to see directly. Rather, it follows from \cite[%
Lemma 2.19]{farah_partition_2002} and Proposition \ref{Proposition:layered}
or, alternatively, \cite[Theorem 3.8]{farah_partition_2002}. Once one
observe that any layered action is a Ramsey action of $I_{k}$, one can see
that Theorem \ref{Theorem:monochromatic-tree-action} subsumes \cite[Theorem
3.13]{farah_partition_2002}.

\subsection{A common generalization\label{Subsection:common}}

Let $L$ be a set (alphabet). Suppose that $T$ is a finite rooted tree with
root $r$, and $T^{+}$ is the set of nodes of $T$ different from the root. We
regard the nodes of $T^{+}$ as \emph{variables}. We let $\mathrm{FIN}%
_{L,r}^{T}$ be the set of functions $b:\mathrm{dom}\left( b\right)
\rightarrow L$, where $\mathrm{dom}\left( b\right) $ is a (possibly empty)
subset of $\omega $. For $t\in T^{+}$ let $\mathrm{FIN}_{L,t}^{T}$ be the
set of functions $b:\mathrm{dom}\left( b\right) \rightarrow L\cup T^{+}$
such that $\mathrm{dom}\left( b\right) $ is a finite subset of $\omega $,
and the intersection of the range of $b$ with $T^{+}$ is a nonempty chain
with least element $t$. Let $\mathrm{FIN}_{L}^{T}$ be the set of functions $%
b:\mathrm{dom}\left( b\right) \rightarrow L\cup T^{+}$, w here $\mathrm{dom}%
\left( b\right) $ is a (possibly empty) finite subset of $\omega $. Then $%
\mathrm{FIN}_{L}^{T}$ is endowed with a natural partial semigroup operation.
If $b_{0},b_{1}\in $ $\mathrm{FIN}_{L}^{T}$, then $b_{0}+b_{1}$ is defined
if and only if the domains of $b_{0}$ and $b_{1}$ are disjoint. In such a
case one has that $b_{0}+b_{1}$ is the union of $b_{0}$ and $b_{1}$.

\begin{definition}
\label{Definition:variable-substitution}A \emph{variable substitution map }%
is a \emph{partially defined }function $\sigma $ from a subset $\mathrm{dom}%
\left( \sigma \right) $ of $T^{+}\cup L$ to $T^{+}\cup L$ such that

\begin{itemize}
\item $L\subset \mathrm{dom}\left( \sigma \right) $ and $\sigma |_{L}$ is
the identity function of $L$, and

\item the function $f_{\sigma }:T\rightarrow T$ defined by%
\begin{equation*}
t\mapsto \left\{ 
\begin{array}{ll}
\sigma \left( t\right) & \text{if }t\in \mathrm{dom}\left( \sigma \right) 
\text{ and }\sigma \left( t\right) \in T^{+}\text{,} \\ 
r & \text{otherwise}%
\end{array}%
\right.
\end{equation*}%
is a regressive homomorphism of $T$.
\end{itemize}
\end{definition}

Any variable substitution map $\sigma $ defines an adequate partial
semigroup homomorphism $\tau _{\sigma }$ of $\mathrm{FIN}_{L}^{T}$ as
follows. Suppose that $b\in \mathrm{FIN}_{L}^{T}$. Then the domain of $\tau
_{\sigma }\left( b\right) $ is $\left\{ n\in \mathrm{dom}\left( b\right)
:b\left( n\right) \in \mathrm{dom}\left( \sigma \right) \right\} $, and $%
\tau _{\sigma }\left( b\right) $ is the function $n\mapsto \sigma \left(
b\left( n\right) \right) $. The collection of such maps defines an action of 
$T$ on $\mathrm{FIN}_{L}^{T}$ in the sense of Definition \ref%
{Definition:action-partial}. The \emph{spine }of the variable substitution
map $\tau _{\sigma }$ is the regressive homomorphism $f_{\sigma
}:T\rightarrow T$.

We claim that such an action is Ramsey; see Definition \ref%
{Definition:Ramsey-action} and Theorem \ref%
{Theorem:monochromatic-tree-action}. Fix any minimal idempotent element $\xi
\left( r\right) $ of $\mathrm{FIN}_{L}$. By Lemma \ref{Lemma:minimal-lift}
for every $t_{0}\in T$ of height $1$ there exists a minimal idempotent
element $\xi \left( t_{0}\right) $ of $\gamma \mathrm{FIN}_{L,t_{0}}^{T}$
such that $\xi \left( t_{0}\right) \leq \xi \left( r\right) $. By minimality
of $\xi \left( r\right) $, if $\sigma $ is any variable substitution map
such that $f_{\sigma }\left( t_{0}\right) =r $ one has that $\tau _{\sigma
}\left( \xi \left( t_{0}\right) \right) =\xi \left( r\right) $. Fix now $%
t\in T^{+}$ and consider a predecessor $t_{0}$ of $T$ of height $1$. Let $%
\psi _{t}:\mathrm{FIN}_{L,t_{0}}^{T}\rightarrow \mathrm{FIN}_{L,t}^{T}$ be
the function defined by letting, for $b\in \mathrm{FIN}_{L,t_{0}}^{T}$, $%
\psi _{t}\left( b\right) $ be the function with the same domain as $b$ such
that%
\begin{equation*}
\psi _{t}\left( b\right) \left( n\right) =\left\{ 
\begin{array}{ll}
t & \text{if }b\left( n\right) =t_{0} \\ 
b\left( n\right) & \text{otherwise.}%
\end{array}%
\right.
\end{equation*}%
Since $\psi _{t}$ is an adequate semigroup homomorphism, it extends to a
continuous semigroup homomorphism $\psi _{t}:\gamma \mathrm{FIN}%
_{L,t_{0}}^{T}\rightarrow \gamma \mathrm{FIN}_{L,t}^{T}$. Define now $\xi
\left( t\right) :=\psi _{t}\left( \xi \left( t_{0}\right) \right) $. Observe
that, if $\sigma $ is a variable substitution map, then $\tau _{\sigma
}\left( \xi \left( t\right) \right) =\xi \left( f_{\sigma }\left( t\right)
\right) $. Indeed, if $f_{\sigma }\left( t\right) =s\in T^{+}$ then we have
that $\tau _{\sigma }\left( \xi \left( t\right) \right) =\left( \tau
_{\sigma }\circ \psi _{t}\right) \left( \xi \left( t_{0}\right) \right)
=\psi _{s}\left( \xi \left( t_{0}\right) \right) =\xi \left( s\right) $. If $%
f_{\sigma }\left( t\right) =r$ then fix any variable substitution map $%
\sigma ^{\prime }$ such that $\sigma ^{\prime }\left( t_{0}\right) =\sigma
\left( t\right) $ (where the equality should be interpreted as asserting
that the left hand side is not defined if the right hand side is not
defined). Then we have that $\tau _{\sigma }\left( \xi \left( t\right)
\right) =\left( \tau _{\sigma }\circ \psi _{t}\right) \left( \xi \left(
t_{0}\right) \right) =\tau _{\sigma ^{\prime }}\left( \xi \left(
t_{0}\right) \right) =\xi \left( r\right) =\xi \left( f_{\sigma }\left(
t\right) \right) $. This concludes the proof.

Therefore applying Theorem \ref{Theorem:monochromatic-tree-action} one
obtains the following result, which is a common generalization of Gowers'
theorem for multiple tetris operations and the infinite Hales--Jewett
theorem for located words \cite[Theorem 6.1]{bergelson_partition_1994}.

\begin{theorem}
\label{Theorem:Gowers-Hales-Jewett}Suppose that $T$ is a finite tree, $L$ is
a set, and $c$ is a finite coloring of $\mathrm{FIN}_{L}^{T}$. For any
sequence $\left( d_{n}\right) $ in $\omega $ and sequence $\left( \mathcal{F}%
_{n}\right) $ where $\mathcal{F}_{n}$ is a finite set of variable
substitution maps for $T,L$ as in Definition \ref%
{Definition:variable-substitution}, there exist sequences $\left(
b_{t,n}\right) _{n\in \omega }$ in $\mathrm{FIN}_{L,t}^{T}$ for $t\in T$
such that

\begin{itemize}
\item for every $n\in \omega $ and $t\in T$, $\mathrm{dom}\left(
b_{t,n}\right) >d_{n}$ and $\mathrm{dom}\left( b_{t,n+1}\right) >\mathrm{dom}%
\left( b_{t,n}\right) $;

\item for every $\ell \in \omega $, $n_{0}<\cdots <n_{\ell }\in \omega $, $%
t_{0},\ldots ,t_{\ell }\in T$, variable substitution maps $\sigma
_{0},\ldots ,\sigma _{\ell }$, such that $\left\{ f_{\sigma _{0}}\left(
t_{0}\right) ,\ldots ,f_{\sigma _{\ell }}\left( t_{\ell }\right) \right\} $
is a chain in $T$ least element $t$, one has that the color of%
\begin{equation*}
\tau _{\sigma _{0}}\left( b_{t_{0},n_{0}}\right) +\cdots +\tau _{\sigma
_{\ell }}\left( b_{t_{\ell },n_{\ell }}\right)
\end{equation*}%
depends only on $t$.
\end{itemize}
\end{theorem}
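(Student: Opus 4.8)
The plan is to obtain the theorem as a direct application of Theorem~\ref{Theorem:monochromatic-tree-action} to the action $\alpha$ of $T$ on the adequate partial semigroup $S:=\mathrm{FIN}_L^T$ described above, where $S_t=\mathrm{FIN}_{L,t}^T$ for $t\in T$ and $\mathcal{F}_\alpha$ is the semigroup of all maps $\tau_\sigma$ associated to variable substitution maps $\sigma$. The preceding discussion already shows that $\alpha$ is Ramsey in the sense of Definition~\ref{Definition:Ramsey-action}, so Theorem~\ref{Theorem:monochromatic-tree-action} gives us the conclusion of its statement~(3) for $\alpha$, and it remains only to feed it the right auxiliary data and unwind the output. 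Since the identity map $\mathrm{id}_{T^+\cup L}$ is a variable substitution map with $\tau_{\mathrm{id}}=\mathrm{id}_S$, we may enlarge each $\mathcal{F}_n$ so as to assume $\tau_{\mathrm{id}}\in\mathcal{F}_n$ for all $n$; this only strengthens the conclusion.

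First I would take $\psi_n^{(\mathcal{F})}$ to be the constant function with value $\{\tau_\sigma:\sigma\in\mathcal{F}_n\}\in[\mathcal{F}_\alpha]^{<\aleph_0}$. To encode the two domain requirements I would let $\psi_n^{(S)}$ depend on the previously chosen functions: fix any $v_0\in L\cup T^+$ (the degenerate case $L\cup T^+=\emptyset$, where $S=\{\emptyset\}$, being trivial), put $N_n:=\max\bigl(\{d_n\}\cup\bigcup_{i<n,\,s\in T}\mathrm{dom}(x_i(s))\bigr)$, let $a_n\in S$ be the constant function with value $v_0$ and domain $\{0,1,\dots,N_n\}$, and define $\psi_n^{(S)}(x_0,\dots,x_{n-1})$ to be the finite set consisting of $a_n$ together with all $\tau_\sigma(x_i(s))$ for $i<n$, $s\in T$, and $\sigma\in\mathcal{F}_i$. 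Because $\tau_{\mathrm{id}}\in\mathcal{F}_i$, this set contains the range of $\tau_i\circ x_i$ for every admissible $\tau_i$, so the compatibility hypothesis of Theorem~\ref{Theorem:monochromatic-tree-action}(3) is met; moreover membership of $x$ in $\varphi_S(\psi_n^{(S)}(x_0,\dots,x_{n-1}))$ forces $\mathrm{dom}(x)\cap\{0,\dots,N_n\}=\emptyset$, hence $\mathrm{dom}(x)>d_n$ and $\mathrm{dom}(x)>\mathrm{dom}(x_i(s))$ for all $i<n$ and $s\in T$.

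Applying Theorem~\ref{Theorem:monochromatic-tree-action}(3) with this data produces functions $x_n:T\to S$; I would set $b_{t,n}:=x_n(t)\in\mathrm{FIN}_{L,t}^T$. The containment $x_n(t)\in S_t\cap(\varphi_S\circ\psi_n^{(S)})(x_0,\dots,x_{n-1})$ gives $\mathrm{dom}(b_{t,n})>d_n$ and, since $N_{n+1}\ge\max\mathrm{dom}(x_n(t))$, also $\mathrm{dom}(b_{t,n+1})>\mathrm{dom}(b_{t,n})$, which is the first bullet; these inequalities also guarantee that the sums $\tau_{\sigma_0}(b_{t_0,n_0})+\dots+\tau_{\sigma_\ell}(b_{t_\ell,n_\ell})$ are well defined whenever $n_0<\dots<n_\ell$. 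The second bullet is precisely the monochromaticity clause of Theorem~\ref{Theorem:monochromatic-tree-action}(3), read through the identifications $x_{n_i}(t_i)=b_{t_i,n_i}$ and $f_{\tau_{\sigma_i}}=f_{\sigma_i}$, with the $\tau_i$ ranging over $\psi_{n_i}^{(\mathcal{F})}=\{\tau_\sigma:\sigma\in\mathcal{F}_{n_i}\}$. I do not expect a genuine obstacle here: the Ramsey-theoretic content is entirely absorbed into Theorem~\ref{Theorem:monochromatic-tree-action}, and the only point requiring care is arranging $\psi_n^{(S)}$ so that a single application of $\varphi_S$ enforces both the lower bound $d_n$ and the nesting of domains simultaneously.
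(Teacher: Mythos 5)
Your proposal is correct and follows exactly the paper's route: the paper also deduces this theorem by applying Theorem~\ref{Theorem:monochromatic-tree-action} to the Ramsey action of $T$ on $\mathrm{FIN}_{L}^{T}$ by variable substitution maps established in the preceding discussion. Your explicit choice of $\psi_{n}^{(S)}$ (adjoining the blocking word $a_{n}$ so that $\varphi_{S}$ forces both $\mathrm{dom}(b_{t,n})>d_{n}$ and the nesting of domains) correctly supplies bookkeeping that the paper leaves implicit.
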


Theorem \ref{Theorem:monochromatic-tree-action} is the particular instance
of Theorem \ref{Theorem:Gowers-Hales-Jewett} where $L=\varnothing $. The
infinite Hales-Jewett theorem for located words \cite[Theorem 6.1]%
{bergelson_partition_1994}---see also \cite[Theorem 2.40]%
{todorcevic_introduction_2010}---is the particular instance of Theorem \ref%
{Theorem:Gowers-Hales-Jewett} where $T$ is the rooted tree with only two
nodes.

More generally from Corollary \ref{Corollary:product-action-tree} one can
deduce the following generalization of Theorem \ref%
{Theorem:Gowers-Hales-Jewett}.

\begin{theorem}
\label{Theorem:product-Gowers-Hales-Jewett}Suppose that $m,k\in \mathbb{N}$
and $\lambda :m\rightarrow k$ is a function. Let $T_{i}$ be a finite rooted
tree and $L_{i}$ be a set for $i\in k$. Fix any coloring $c$ of $\mathrm{FIN}%
_{L_{\lambda \left( 0\right) }}^{T_{\lambda \left( 0\right) }}\times \cdots
\times \mathrm{FIN}_{L_{\lambda \left( m-1\right) }}^{T_{\lambda \left(
m-1\right) }}$. For any sequence $\left( d_{n}\right) $ of elements of $%
\omega $, $i\in k$, sequences $\left( \mathcal{F}_{i,n}\right) $ where $%
\mathcal{F}_{i,n}$ is a finite set of variable substitution maps for $%
T_{i},L_{i}$ as in Definition \ref{Definition:variable-substitution}, for
every and $t\in T_{i}$, there exist sequences $\left( b_{i,t,n}\right)
_{n\in \omega }$ in $\mathrm{FIN}_{L_{i},t}^{T_{i}}$ such that

\begin{itemize}
\item the least element of the domain of $b_{i,t,n}$ is larger than $d_{n}$
and larger than the largest element of the domain of $b_{i,t,n-1}$, and

\item for any $F_{0}<F_{1}<\cdots <F_{m-1}$, nodes $t_{i,d}$ of $T_{i}$,
variable substitution maps $\sigma _{i,d}\in \mathcal{F}_{i,d}$ for $d\in
\omega $ and $i\in k$ such that $\{f_{\sigma _{\lambda \left( s\right)
,d}}\left( t_{\lambda \left( s\right) ,d}\right) :d\in F_{s}\}$ is a chain
in $T_{\lambda \left( s\right) }$ with least element $t_{s}$ for every $s\in
m$, the color of 
\begin{equation*}
\left( \sum_{d\in F_{s}}\tau _{\sigma _{\lambda \left( s\right)
,d}}(b_{f\left( s\right) ,t_{\lambda \left( s\right) },d})\right) _{s\in m}
\end{equation*}%
depends only on $\left( t_{s}\right) _{s\in m}$.
\end{itemize}
\end{theorem}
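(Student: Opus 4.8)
The plan is to derive this from Corollary~\ref{Corollary:product-action-tree}. For each $i\in k$ set $S_i:=\mathrm{FIN}_{L_i}^{T_i}$ and let $\alpha_i$ be the action of $T_i$ on $S_i$ described in Subsection~\ref{Subsection:common}, where $\mathcal{F}_{\alpha_i}$ consists of the adequate partial semigroup homomorphisms $\tau_\sigma$ associated with the variable substitution maps $\sigma$ for $T_i,L_i$, and the spine of $\tau_\sigma$ is $f_\sigma$. It was shown there---via Lemma~\ref{Lemma:minimal-lift}, Theorem~\ref{Theorem:monochromatic-tree-action} and the auxiliary maps $\psi_t$---that $(\gamma S_i)_{\alpha_i}$ is nonempty, hence that each $\alpha_i$ is a Ramsey action. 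Thus Corollary~\ref{Corollary:product-action-tree} applies to the family $(\alpha_i)_{i\in k}$ once we supply suitable selection functions $\psi_{i,n}^{(\mathcal{F})}$ and $\psi_{i,n}^{(S)}$, and the theorem will follow by unwinding its conclusion.

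I would take $\psi_{i,n}^{(\mathcal{F})}$ to be the constant function with value $\{\tau_\sigma:\sigma\in\mathcal{F}_{i,n}\}\in[\mathcal{F}_{\alpha_i}]^{<\aleph_0}$. For $\psi_{i,n}^{(S)}$, given already-chosen $x_{i,0},\dots,x_{i,n-1}\in S_i^{T_i}$, set $D_{i,n}:=\max\bigl(\{d_n\}\cup\bigcup_{j<n,\,t\in T_i}\mathrm{dom}(x_{i,j}(t))\bigr)$, fix some $e_{i,n}\in S_i$ with $\mathrm{dom}(e_{i,n})=\{0,1,\dots,D_{i,n}\}$ (such an element exists outside the trivial case $L_i=\varnothing$ and $T_i$ a single node: any function on $\{0,\dots,D_{i,n}\}$ with values in $L_i\cup T_i^{+}$ belongs to $\mathrm{FIN}_{L_i}^{T_i}$), and put
\[
\psi_{i,n}^{(S)}(x_{i,0},\dots,x_{i,n-1}):=\{e_{i,n}\}\cup\bigcup_{j<n}\ \bigcup_{\sigma\in\mathcal{F}_{i,j}}\ \{\tau_\sigma(x_{i,j}(t)):t\in T_i\}.
\]
By construction this set contains the range of $\tau\circ x_{i,j}$ for every $j<n$ and every $\tau\in\psi_{i,j}^{(\mathcal{F})}(x_{i,0},\dots,x_{i,j-1})$, so the hypotheses of Corollary~\ref{Corollary:product-action-tree} are satisfied.

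Applying the Corollary will produce sequences $(x_{i,n})_{n\in\omega}$ of functions $x_{i,n}:T_i\to S_i$; set $b_{i,t,n}:=x_{i,n}(t)$. The membership $x_{i,n}(t)\in S_{i,t}=\mathrm{FIN}_{L_i,t}^{T_i}$ is exactly $b_{i,t,n}\in\mathrm{FIN}_{L_i,t}^{T_i}$. The membership $x_{i,n}(t)\in\varphi_{S_i}\bigl(\psi_{i,n}^{(S)}(x_{i,0},\dots,x_{i,n-1})\bigr)$ means $\mathrm{dom}(b_{i,t,n})$ is disjoint from the domain of every element of $\psi_{i,n}^{(S)}(x_{i,0},\dots,x_{i,n-1})$; disjointness from $\mathrm{dom}(e_{i,n})=\{0,\dots,D_{i,n}\}$ forces the least element of $\mathrm{dom}(b_{i,t,n})$ to exceed $D_{i,n}$, hence to exceed $d_n$ and the largest element of $\mathrm{dom}(b_{i,t,n-1})$ (this is the first bullet), and it also makes all the finite sums below defined. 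Finally, since $f_{\tau_\sigma}=f_\sigma$, the chain hypothesis of the Corollary on $\{f_{\tau_{\lambda(s),d}}(t_{\lambda(s),d}):d\in F_s\}$ is literally the chain hypothesis on $\{f_{\sigma_{\lambda(s),d}}(t_{\lambda(s),d}):d\in F_s\}$ with $\sigma_{\lambda(s),d}\in\mathcal{F}_{\lambda(s),d}$, and the Corollary's conclusion that the color of $\bigl(\sum_{d\in F_s}\tau_{\sigma_{\lambda(s),d}}(x_{\lambda(s),d}(t_{\lambda(s),d}))\bigr)_{s\in m}$ depends only on $(t_s)_{s\in m}$ becomes the second bullet once one substitutes $b_{\lambda(s),t_{\lambda(s),d},d}=x_{\lambda(s),d}(t_{\lambda(s),d})$. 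The only step requiring genuine thought is the design of the finite sets $\psi_{i,n}^{(S)}(\cdots)$ in the second paragraph: they must be arranged so that passing to $\varphi_{S_i}$ simultaneously yields the ``large and increasing domain'' condition---this is the purpose of the interval witness $e_{i,n}$---and meets the compatibility requirement of the Corollary; with that in hand the statement is pure notation-chasing.
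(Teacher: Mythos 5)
Your proposal is correct and follows exactly the route the paper intends: the paper derives Theorem \ref{Theorem:product-Gowers-Hales-Jewett} from Corollary \ref{Corollary:product-action-tree} applied to the Ramsey actions of the trees $T_i$ on $\mathrm{FIN}_{L_i}^{T_i}$ constructed in Subsection \ref{Subsection:common}, which is precisely what you do. Your explicit construction of the selection functions $\psi_{i,n}^{(S)}$ via the interval witness $e_{i,n}$ (so that $\varphi_{S_i}$ forces the domains to be large and increasing) correctly supplies the details the paper leaves implicit.
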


\section{Actions of trees on filtered semigroups\label%
{Section:action-filtered}}

\subsection{Filtered sets and filtered semigroup\label%
{Subsection:filtered-semigroup}}

By a\emph{\ filtered set} we mean a set $S$ endowed with a distinguished
filter $\mathfrak{F}$. An ultrafilter $\mathcal{U}$ on the filtered set $S$
is \emph{cofinal }if it contains $\mathfrak{F}$. We then let $\beta _{%
\mathfrak{F}}S$ be the closed subsets of $\beta S$ consisting of the cofinal
ultrafilters on $S$. Given a filter $\mathfrak{F}$ on $S$ we define the
corresponding \emph{dual coideal }to be the collection $\mathfrak{F}^{\ast }$
of subsets $A$ of $S$ whose complement does not belong to $\mathfrak{F}$
(equivalently, $A\cap C\neq \varnothing $ for every $C\in \mathfrak{F}$);
see \cite[Section 1]{bergelson_large_2008} and also \cite[Section 2]%
{akin_recurrence_1997}. It is not difficult to verify that this is indeed a
coideal in the sense of \cite[Section 1.1]{todorcevic_introduction_2010}.
Observe that an ultrafilter $\mathcal{U}$ over $S$ contains $\mathfrak{F}$
if and only if it is contained in $\mathfrak{F}^{\ast }$. Furthermore, $%
\mathfrak{F}^{\ast }$ is the union of all the ultrafilters in $\beta _{%
\mathfrak{F}}S$, while $\mathfrak{F}$ is the intersection of all the
ultrafilters in $\beta _{\mathfrak{F}}S$; see \cite[Theorem 3.11]%
{hindman_algebra_2012}.

Suppose now that $\left( S,+\right) $ is a partial semigroup. Let $C$ be a
subset of $S$ and $a$ be an element of $S$. Using notation from \cite%
{farah_partition_2002} we denote by $-a+C$ the set of elements $b$ of $S$
such that $a+b$ is defined and belongs to $C$.

\begin{definition}
\label{Definition:filterd-semigroup}A filtered semigroup is a triple $\left(
S,\mathfrak{F},+\right) $ where $\left( S,+\right) $ is a partial semigroup, 
$\mathfrak{F}$ is a filter on $S$ such that

\begin{itemize}
\item for every $x\in S$, $\mathfrak{F}$ contains $\{y\in S:x+y$ is defined$%
\}$;

\item for any $C\in \mathfrak{F}$ and for any $B\in \mathfrak{F}^{\ast }$
there exists a finite subset $F$ of $B$ such that $\bigcup_{a\in F}\left(
-a+C\right) \in \mathfrak{F}$.
\end{itemize}
\end{definition}

\begin{proposition}
\label{Propositon:filterd-semigroup}Suppose that $\left( S,+\right) $ is an
adequate partial semigroup, and $\mathfrak{F}$ is a filter on $S$. Then $%
\left( S,\mathfrak{F},+\right) $ is a filtered semigroup if and only if $%
\beta _{\mathfrak{F}}S$ is a closed subsemigroup of $\gamma S$.
\end{proposition}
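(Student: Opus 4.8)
The plan is to run everything through the three facts recalled just before the statement: $\beta_{\mathfrak{F}}S$ is a closed subset of $\beta S$; one has $\mathfrak{F}=\bigcap\{\mathcal{U}:\mathcal{U}\in\beta_{\mathfrak{F}}S\}$ and $\mathfrak{F}^{\ast}=\bigcup\{\mathcal{U}:\mathcal{U}\in\beta_{\mathfrak{F}}S\}$; and $C\in\mathcal{U}+\mathcal{V}$ if and only if $\{x\in S:-x+C\in\mathcal{V}\}\in\mathcal{U}$. The first step is to note that the first bullet of Definition \ref{Definition:filterd-semigroup} holds precisely when $\beta_{\mathfrak{F}}S\subseteq\gamma S$: if $\{y\in S:x+y\text{ is defined}\}\in\mathfrak{F}$ for every $x$ then every ultrafilter extending $\mathfrak{F}$ is cofinal in the partial semigroup sense, and conversely if $\beta_{\mathfrak{F}}S\subseteq\gamma S$ then $\{y:x+y\text{ is defined}\}$ lies in every member of $\beta_{\mathfrak{F}}S$, hence in their intersection $\mathfrak{F}$. (Here $\beta_{\mathfrak{F}}S$ is nonempty, since a closed subsemigroup is by convention nonempty, equivalently $\mathfrak{F}$ is proper.) Since $\beta_{\mathfrak{F}}S$ is always closed, the proposition reduces to showing that the second bullet of Definition \ref{Definition:filterd-semigroup} is equivalent to $\beta_{\mathfrak{F}}S$ being closed under $+$.

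For the forward direction, assume $(S,\mathfrak{F},+)$ is a filtered semigroup, fix $\mathcal{U},\mathcal{V}\in\beta_{\mathfrak{F}}S$ and $C\in\mathfrak{F}$, and suppose toward a contradiction that $C\notin\mathcal{U}+\mathcal{V}$. Then $B:=\{x\in S:-x+C\notin\mathcal{V}\}$ belongs to $\mathcal{U}$, hence to $\mathfrak{F}^{\ast}$ because $\mathcal{U}\supseteq\mathfrak{F}$. Applying the second bullet to $C$ and $B$ produces a finite $F\subseteq B$ with $\bigcup_{a\in F}(-a+C)\in\mathfrak{F}\subseteq\mathcal{V}$; as $\mathcal{V}$ is an ultrafilter and $F$ is finite, $-a+C\in\mathcal{V}$ for some $a\in F\subseteq B$, contradicting the definition of $B$. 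Thus $\mathcal{U}+\mathcal{V}\supseteq\mathfrak{F}$, so $\beta_{\mathfrak{F}}S$ is a closed subsemigroup of $\gamma S$.

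For the converse, assume $\beta_{\mathfrak{F}}S$ is a closed subsemigroup of $\gamma S$; the first bullet holds by the observation above. For the second, fix $C\in\mathfrak{F}$ and $B\in\mathfrak{F}^{\ast}$ and suppose, for contradiction, that $\bigcup_{a\in F}(-a+C)\notin\mathfrak{F}$ for every finite $F\subseteq B$, that is, $\bigcap_{a\in F}(S\setminus(-a+C))\in\mathfrak{F}^{\ast}$ and so this set meets every member of $\mathfrak{F}$. This says exactly that the family $\mathfrak{F}\cup\{S\setminus(-a+C):a\in B\}$ has the finite intersection property, hence extends to an ultrafilter $\mathcal{V}$; then $\mathcal{V}\supseteq\mathfrak{F}$, that is, $\mathcal{V}\in\beta_{\mathfrak{F}}S$, and $-a+C\notin\mathcal{V}$ for every $a\in B$. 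Therefore $B\subseteq S\setminus\{x:-x+C\in\mathcal{V}\}$, and since $B\in\mathfrak{F}^{\ast}$ this forces $\{x:-x+C\in\mathcal{V}\}\notin\mathfrak{F}$, so some $\mathcal{U}\in\beta_{\mathfrak{F}}S$ omits it. Then $C\notin\mathcal{U}+\mathcal{V}$, while $\mathcal{U}+\mathcal{V}\in\beta_{\mathfrak{F}}S$ gives $C\in\mathcal{U}+\mathcal{V}$, a contradiction.

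The bookkeeping with the duality $\mathfrak{F}\leftrightarrow\mathfrak{F}^{\ast}$ and with the formula for $\mathcal{U}+\mathcal{V}$ is routine; the only genuine point is in the converse, namely recognizing that the failure of the second bullet is precisely a finite-intersection-property statement yielding a cofinal ultrafilter $\mathcal{V}$ which, paired with a suitable $\mathcal{U}\in\beta_{\mathfrak{F}}S$, witnesses that $\beta_{\mathfrak{F}}S$ is not closed under $+$. I expect that to be the main, though modest, obstacle.
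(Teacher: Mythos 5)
Your proof is correct and follows essentially the same route as the paper's: both directions argue by contradiction via the formula $C\in\mathcal{U}+\mathcal{V}\Leftrightarrow\{x:-x+C\in\mathcal{V}\}\in\mathcal{U}$, and the converse builds the witnessing cofinal $\mathcal{V}$ from the finite intersection property of $\mathfrak{F}\cup\{S\setminus(-a+C):a\in B\}$, exactly as the paper does via \cite[Theorem 3.11]{hindman_algebra_2012}. The only cosmetic difference is that you select $\mathcal{U}$ by using $\mathfrak{F}=\bigcap\beta_{\mathfrak{F}}S$ to avoid the set $\{x:-x+C\in\mathcal{V}\}$ directly, where the paper takes any $\mathcal{U}\in\beta_{\mathfrak{F}}S$ containing $B$; these are interchangeable.
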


\begin{proof}
Suppose that $\left( S,\mathfrak{F},+\right) $ is a filtered semigroup.
Since $\mathfrak{F}$ contains $\left\{ y\in S:x+y\text{ is defined}\right\} $
for every $x\in S$, it follows that $\beta _{\mathfrak{F}}S$ is contained in 
$\gamma S$. We want to prove that $\beta _{\mathfrak{F}}S$ is a subsemigroup
of $\gamma S$. Let $\mathcal{U},\mathcal{V}$ be ultrafilters in $\beta _{%
\mathfrak{F}}S$. Suppose by contradiction that $\mathcal{U}+\mathcal{V}%
\notin \beta _{\mathfrak{F}}S$. Then there exists $C\in \mathfrak{F}$ such
that $S\diagdown C\in \mathcal{U}+\mathcal{V}$. Therefore we have that $%
B:=\left\{ x\in S:\mathcal{V}y\text{, }x+y\in S\diagdown C\right\} \in 
\mathcal{U}\subset \mathfrak{F}^{\ast }$. Suppose that $F$ is a finite
subset of $B$. Then we have that $\mathcal{V}y$, $\forall a\in F$, $a+y\in
S\diagdown C$. Therefore $\left\{ y\in Y:\forall a\in F\text{, }a+y\in
S\diagdown C\right\} \in \mathcal{V}\subset \mathfrak{F}^{\ast }$. Therefore 
$\left\{ y\in Y:\exists a\in F\text{, }a+y\in C\right\} \notin \mathfrak{F}$%
. This contradicts the assumption that $\left( S,\mathfrak{F},+\right) $ is
a filtered semigroup.

Conversely, suppose that $\beta _{\mathfrak{F}}S$ is a subsemigroup of $%
\gamma S$. Thus we have that 
\begin{equation*}
\mathfrak{F}=\bigcap \beta _{\mathfrak{F}}S\supset \bigcap \gamma S\supset
\left\{ \left\{ y\in S:x+y\text{ is defined}\right\} :x\in S\right\} \text{.}
\end{equation*}%
This shows that $\mathfrak{F}$ satisfies the first condition in the
definition of filtered semigroup. We now want to verify the second
condition. Suppose by contradiction that there exist $C\in \mathfrak{F}$ and 
$B\in \mathfrak{F}^{\ast }$ such that for any finite subset $F$ of $B$ one
has that $\bigcup_{\in F}\left( -a+C\right) \notin \mathfrak{F}$. Therefore
we have that $\bigcap_{x\in F}\left( -a+S\diagdown C\right) \in \mathfrak{F}%
^{\ast }$ for every finite subset $F$ of $B$. Therefore by \cite[Theorem 3.11%
]{hindman_algebra_2012} there exists an ultrafilter $\mathcal{V}$ over $S$
such that $\left\{ -a+S\diagdown C:a\in B\right\} \subset \mathcal{V}\subset 
\mathfrak{F}^{\ast }$, as well as an ultrafilter $\mathcal{U}$ over $S$ such
that $B\in \mathcal{U}\subset \mathfrak{F}^{\ast }$. By the choice of $%
\mathcal{U}$ and $\mathcal{V}$ we have that $C\notin \mathcal{U}+\mathcal{V}$%
. Therefore $\mathcal{U},\mathcal{V}$ are elements of $\beta _{\mathfrak{F}%
}S $ such that $\mathcal{U}+\mathcal{V}\notin \beta _{\mathfrak{F}}S$,
contradicting our assumption that $\beta _{\mathfrak{F}}S$ is a subsemigroup
of $\gamma S$.
\end{proof}

Theorem 4.28 of \cite{hindman_algebra_2012} is the particular instance of
Proposition \ref{Propositon:filterd-semigroup} in the case when the
operation in $S$ is everywhere defined and $\mathfrak{F}$ is the filter of
cofinite subsets of $S$.

Suppose that $\left( S,\mathfrak{F}\right) $ and $\left( T,\mathfrak{G}%
\right) $ are filtered semigroups, and $\sigma :S\rightarrow T$ is a
function. We say that $\sigma $ is a \emph{filtered semigroup homomorphism }%
if $\sigma $ is a partial semigroup homomorphism and $\sigma \left( 
\mathfrak{F}\right) =\mathfrak{G}$, where $\sigma \left( \mathfrak{G}\right) 
$ is the collection of subsets of $T$ whose inverse image under $\sigma $
belongs to $\mathfrak{F}$. This implies that the canonical continuous
extension $\sigma :\beta S\rightarrow \beta T$ restricts to a continuous
semigroup homomorphism from $\beta _{\mathfrak{F}}S$ to $\beta _{\mathfrak{G}%
}T$. We say that $\left( T,\mathfrak{G}\right) $ is a \emph{filtered
subsemigroup }of $\left( S,\mathfrak{F}\right) $ if $T$ is a partial
subsemigroup of $S$ and the inclusion map is a filtered semigroup
homomorphism. In other words, the distinguished filter $\mathfrak{G}$ on $T$
is just the trace $\mathfrak{F}|_{T}:=\left\{ A\cap T:A\in \mathfrak{F}%
\right\} $ of $\mathfrak{F}$ on $T$. This allows one to identify $\beta _{%
\mathfrak{G}}T$ with a closed subsemigroup of $\beta _{\mathfrak{F}}S$. In
this situation we denote $\beta _{\mathfrak{F}|_{T}}T$ by $\beta _{\mathfrak{%
F}}T$.

Suppose that $\left( S,+,\mathfrak{F}\right) $ is a filtered semigroup. We
denote by $\mathcal{S}\left( S,+,\mathfrak{F}\right) $ the set of filtered
subsemigroups of $S$. The set $\mathcal{S}\left( S,+,\mathfrak{F}\right) $
is endowed with a canonical ordering obtained by setting $S_{0}\leq S_{1}$
if and only if, for every $s_{0}\in S_{0}$ and $s_{1}\in S_{1}$, $%
s_{0}+s_{1} $ and $s_{1}+s_{0}$ belong to $S_{0}$ whenever they are defined.
The map $S_{0}\mapsto \beta _{\mathfrak{F}}S_{0}$ defines an
order-preserving function from the space $\mathcal{S}\left( S,+,\mathfrak{F}%
\right) $ of filtered subsemigroups of $S$ to the space $\mathcal{S}\left(
\beta _{\mathfrak{F}}S\right) $ of closed subsemigroups of $\beta _{%
\mathfrak{F}}S$.

\begin{example}
Suppose that $\left( S,+\right) $ is a semigroup and $\left( x_{n}\right) $
is a sequence in $S$. Consider then the filter $\mathfrak{F}$ on $S$
generated by sets of the form $\mathrm{FS}\left( x_{n}\right) _{n\geq
r}=\left\{ x_{n_{0}}+x_{n_{1}}+\cdots +x_{n_{\ell }}:\ell \in \omega ,r\leq
n_{0}<\cdots <n_{\ell }\in \omega \right\} $ for $r\in \omega $. Then $%
\left( S,+,\mathfrak{F}\right) $ is a filtered semigroup. In this case $%
\beta _{\mathfrak{F}}S$ is the closed subsemigroup of $\beta S$ consisting
of ultrafilters on $S$ that contain $\mathrm{FS}\left( x_{n}\right) _{n\geq
r}$ for every $r\in \omega $.
\end{example}

\begin{example}
Suppose that $S$ is a semigroup, and $\mathfrak{F}$ is the filter of
cofinite sets. Then $\beta _{\mathfrak{F}}S$ is the set of nonprincipal
ultrafilters over $S$. If $S$ is either left or right cancellative, then $%
\left( S,\mathfrak{F},+\right) $ is a filtered semigroup; see also \cite[%
Corollary 4.29]{hindman_algebra_2012}.
\end{example}

\begin{remark}
Any filtered semigroup $S$ can be seen as a filtered semigroup \emph{where
moreover the operation is everywhere defined }by adding an extra \emph{%
absorbing element}. Let $\left( S,+,\mathfrak{F}\right) $ be a filtered
partial semigroup such that $S$ does not contain the symbol $0$. Then one
can consider the semigroup $S_{0}$ by setting $0+x=x+0=0$ for any $x\in
S_{0} $ and $x+y=0$ whenever $x,y\in S$ and $x+y$ is not defined in $S$.
Then one can consider the filter $\mathfrak{F}_{0}$ of subsets of $S_{0}$
generated $\mathfrak{F}$. This gives a filtered semigroup $\left( S_{0},+,%
\mathfrak{F}_{0}\right) $ where moreover the operation on $S_{0}$ is
everywhere defined. The compact right topological semigroup $\beta _{%
\mathfrak{F}_{0}}S_{0}$ is canonically isomorphic to $\beta _{\mathfrak{F}}S$%
.

If $S,T$ are filtered semigroups and $\sigma :S\rightarrow T$ is a filtered
semigroup homomorphism, then one can canonically extend $\sigma $ to a
filtered semigroup homomorphism $\sigma _{0}:S_{0}\rightarrow T_{0}$ such
that $\sigma \left( 0\right) =0$.
\end{remark}

\subsection{Actions of ordered sets on filtered semigroups}

Suppose that $\mathbb{P}$ is an ordered set, and $\left( S,+,\mathfrak{F}%
\right) $ is a filtered semigroup. We denote by $\mathrm{End}\left( S,+,%
\mathfrak{F}\right) $ the space of filtered semigroup homomorphisms $\tau
:S\rightarrow S$. Observe that $\mathrm{\mathrm{En}d}\left( S,+,\mathfrak{F}%
\right) $ is a semigroup with respect to composition.

\begin{definition}
\label{Definition:action-filtered}An \emph{action }$\alpha $ of $\mathbb{P}$
on $\left( S,+,\mathfrak{F}\right) $ is given by

\begin{itemize}
\item an order-preserving function $\mathbb{P}\rightarrow \mathcal{S}\left(
S,+,\mathfrak{F}\right) $, $t\mapsto S_{t}$, and

\item a subsemigroup $\mathcal{F}_{\alpha }\subset \mathrm{End}\left( S,+,%
\mathfrak{F}\right) $,
\end{itemize}

such that such that for every $\tau \in \mathcal{F}_{\alpha }$ there exists
a function $f_{\tau }:\mathbb{P}\rightarrow \mathbb{P}$---which we call the 
\emph{spine} of $\tau $---such that $\tau $ maps $S_{t}$ to $S_{f_{\tau
}\left( t\right) }$ for every $t\in \mathbb{P}$, and such that $\tau \left(
s\right) =s$ for any $s\in S_{t}$ and $t\in T$ such that $f_{\tau }\left(
t\right) =t$.
\end{definition}

As in the case of actions on adequate partial semigroups, an action $\alpha $
of $\mathbb{P}$ on $\left( S,+,\mathfrak{F}\right) $ induces an action---in
the sense of Definition \ref{Definition:action-compact}---of $\mathbb{P}$ on
the compact right topological semigroup $X=\beta _{\mathfrak{F}}S$, which we
still denote by $\alpha $. Such an action is obtained by setting $%
X_{t}:=\beta _{\mathfrak{F}}S_{t}$ and $\tau :\beta _{\mathfrak{F}%
}S\rightarrow \beta _{\mathfrak{F}}S$ to be the unique extension of $\tau $
to a continuous partial semigroup endomomorphism of $\beta _{\mathfrak{F}}S$
for every $\tau \in \mathcal{F}_{\alpha }$. Consistently with the notation
used in Subsection \ref{Subsection:action-compact-semigroup}, we denote by $%
(\beta _{\mathfrak{F}}S)_{\alpha }$ the set of functions $\xi :\mathbb{P}%
\rightarrow \beta _{\mathfrak{F}}S$ such that $\xi \left( t\right) \in \beta
_{\mathfrak{F}}S_{t}$ and $\tau \circ \xi =\xi \circ f_{\tau }$ for every $%
\tau \in \mathcal{F}_{\alpha }$. An order-preserving idempotent in $\left(
\beta _{\mathfrak{F}}S\right) _{\alpha }$ is an element $\xi $ of $\left(
\beta _{\mathfrak{F}}S\right) _{\alpha }$ such that $\xi \left( t\right) $
is an idempotent element of $\beta _{\mathfrak{F}}S_{t}$ and $\xi \left(
t\right) \leq \xi \left( t_{0}\right) $ whenever $t,t_{0}\in \mathbb{P}$ are
such that $t\leq t_{0}$.

The same proof as Theorem \ref{Theorem:monochromatic-action} gives the
following.

\begin{theorem}
\label{Theorem:product-action-filtered}Suppose that $k,m\in \mathbb{N}$ and $%
\lambda :m\rightarrow k$ is a function.\ For $i\in k$ let $\left(
S_{i},+_{i},\mathfrak{F}_{i}\right) $ be a filtered semigroup, $\mathbb{P}%
_{i}$ a finite ordered set, and $\alpha _{i}$ an action of $\mathbb{P}_{i}$
on $\left( S_{i},+_{i},\mathfrak{F}_{i}\right) $. Suppose that $\xi _{i}\in
\left( \gamma S_{i}\right) _{\alpha _{i}}$ is an order-preserving idempotent
for $i\in k$. Set $S:=S_{\lambda \left( 0\right) }\times \cdots \times
S_{\lambda \left( m-1\right) }$ and suppose that $c$ is a finite coloring of 
$S$. Extend $c$ canonically to a coloring of $\beta S$. Fix a sequence $%
(\psi _{i,n}^{(\mathfrak{F})})$ of functions $\psi _{i,n}:(S_{i}^{\mathbb{P}%
_{i}})^{n}\rightarrow \mathfrak{F}_{i}$, and sequence $(\psi _{i,n}^{(%
\mathcal{F})})$ of functions $\psi _{i,n}:(S_{i}^{\mathbb{P}%
_{i}})^{n}\rightarrow \left[ \mathfrak{F}_{i}\right] ^{<\aleph _{0}}$. Then
there exist sequences $\left( x_{i,n}\right) _{n\in \omega }$ of functions $%
x_{i,n}:\mathbb{P}_{i}\rightarrow S_{i}$ such that

\begin{itemize}
\item $x_{i,n}\left( t\right) \in S_{i,t}\cap \psi _{i,n}^{(\mathfrak{F}%
)}\left( x_{i,0},\ldots ,x_{i,n-1}\right) $ for every $n\in \omega $, $i\in
k $, and $t\in \mathbb{P}_{i}$; and

\item for any $\ell \in \omega $, $i\in k$, finite subsets $%
F_{0}<F_{1}<\cdots <F_{m-1}$ of $\omega $, sequences $\left( t_{i,n}\right)
_{n\in \omega }$ in $\mathbb{P}_{i}$ and $\left( \tau _{i,n}\right) _{n\in
\omega }$ in $\mathcal{F}_{\alpha _{i}}$ such that $\tau _{i,n}\in \psi
_{i,n}^{(\mathcal{F})}\left( x_{i,0},\ldots ,x_{i,n-1}\right) $ for every $%
n\in \omega $, if $\{f_{\tau _{\lambda \left( s\right) ,d}}\left( t_{\lambda
\left( s\right) ,d}\right) :d\in F_{s}\}$ is a chain in $\mathbb{P}_{\lambda
\left( s\right) }$ with least element $t_{s}$ for every $s\in m$, then the
color of%
\begin{equation*}
\left( \sum\nolimits_{d\in F_{s}}^{\lambda \left( s\right) }\tau _{\lambda
(s),d}\left( x_{\lambda (s),d}\left( t_{\lambda \left( s\right) ,d}\right)
\right) \right) _{s\in m}
\end{equation*}%
is the same as the color of $\xi _{\lambda (0)}\left( t_{0}\right) \otimes
\xi _{\lambda \left( 1\right) }\left( t_{1}\right) \otimes \cdots \otimes
\xi _{\lambda \left( m-1\right) }\left( t_{m-1}\right) $.
\end{itemize}
\end{theorem}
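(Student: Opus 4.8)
The plan is to reproduce, essentially verbatim, the recursive construction from the proof of Theorem~\ref{Theorem:product-action} (whose argument is in turn modeled on that of Theorem~\ref{Theorem:monochromatic-action}); the only change is that the \emph{domain-of-definition} bookkeeping that was carried by the sets $\varphi_{S_i}(\cdot)$ in the adequate partial semigroup setting is now carried by the filter-valued functions $\psi_{i,n}^{(\mathfrak{F})}$. First I would set up the passage to ultrafilters. Since each $(S_i,+_i,\mathfrak{F}_i)$ is a filtered semigroup, the first axiom of Definition~\ref{Definition:filterd-semigroup} forces every finite intersection $\bigcap_{a\in A}\{y:a+_i y$ is defined$\}$ to belong to $\mathfrak{F}_i$, hence to be nonempty, so $S_i$ is in particular an adequate partial semigroup; by Proposition~\ref{Propositon:filterd-semigroup}, $\beta_{\mathfrak{F}_i}S_i$ is a closed subsemigroup of $\gamma S_i$, the action $\alpha_i$ induces an action of $\mathbb{P}_i$ on the compact right topological semigroup $\beta_{\mathfrak{F}_i}S_i$, and $\xi_i$ is an order-preserving idempotent of $(\beta_{\mathfrak{F}_i}S_i)_{\alpha_i}$. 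The tensor products $\xi_{\lambda(0)}(t_0)\otimes\cdots\otimes\xi_{\lambda(m-1)}(t_{m-1})$ are then well-defined ultrafilters on $S$, to whose canonical extension $c$ is applied.

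The one new observation is the interaction of the filter constraints with the ultrafilters $\xi_i(t)$. For each $i\in k$ and $t\in\mathbb{P}_i$, the ultrafilter $\xi_i(t)$ lies in $\beta_{\mathfrak{F}_i}S_{i,t}$, hence contains $S_{i,t}$ and the trace of $\mathfrak{F}_i$ on $S_{i,t}$, and therefore contains every member of $\mathfrak{F}_i$ (if $C\in\mathfrak{F}_i$ then $C\supseteq C\cap S_{i,t}\in\xi_i(t)$); in particular it contains $\psi_{i,n}^{(\mathfrak{F})}(x_{i,0},\ldots,x_{i,n-1})$ and every set $\{y:z+_i y$ is defined$\}$ with $z\in S_i$. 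Consequently the constraint set $S_{i,t}\cap\psi_{i,n}^{(\mathfrak{F})}(x_{i,0},\ldots,x_{i,n-1})$ is a member of $\xi_i(t)$ and may be intersected into any \emph{good set} drawn from $\xi_i(t)$ without losing membership. This plays precisely the role played by $\varphi_{S_i}(\cdot)$ in the adequate partial semigroup argument, and is the only place where the filtered structure (rather than mere adequacy) is used in the combinatorial step.

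With this in hand the recursion proceeds exactly as in Theorem~\ref{Theorem:product-action}. One builds functions $x_{i,\ell}:\mathbb{P}_i\to S_i$ by recursion on $\ell\in\omega$ with $x_{i,\ell}(t)\in S_{i,t}\cap\psi_{i,\ell}^{(\mathfrak{F})}(x_{i,0},\ldots,x_{i,\ell-1})$, maintaining the monochromaticity conditions $(1_\ell)$ and $(2_\ell)$: for the relevant $j<m$, finite sets $F_0<\cdots<F_j<\ell$, nodes, and endomorphisms $\tau_{i,d}\in\psi_{i,d}^{(\mathcal{F})}(x_{i,0},\ldots,x_{i,d-1})$, the color of a partial tensor product of block-sums whose tail entries are filled in by the $\xi$'s (with, in $(2_\ell)$, one further $\xi$-summand appended to the $j$-th block) agrees with the color of the fully-$\xi$ tensor product. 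From $(2_\ell)$ and the order-preserving idempotency of each $\xi_i$ one derives $(3_\ell)$, the variant with two appended $\xi$-summands. Then, fixing $i$ and $t$, the ultrafilter-quantifier description of $\xi_i(t)$, the identity $\tau\circ\xi_i=\xi_i\circ f_\tau$ for $\tau\in\mathcal{F}_{\alpha_i}$ (valid because elements of $\mathcal{F}_{\alpha_i}$ are filtered semigroup homomorphisms, which extend continuously to $\beta_{\mathfrak{F}_i}S_i$ respecting the spine), and the observation of the previous paragraph together show that the set of $s\in S_{i,t}$ for which the choice $x_{i,\ell+1}(t):=s$ preserves $(1_{\ell+1})$ and $(2_{\ell+1})$, intersected with $S_{i,t}\cap\psi_{i,\ell+1}^{(\mathfrak{F})}(x_{i,0},\ldots,x_{i,\ell})$, is a member of $\xi_i(t)$, hence nonempty. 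Choosing such an $s$ for each $i$ and $t$ closes the recursion; unpacking the definitions of the tensor and partial-semigroup operations then yields the stated conclusion.

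The main---and essentially the only nontrivial---point is the verification in the second paragraph: that the filter-valued constraints $\psi_{i,n}^{(\mathfrak{F})}$, together with the definedness sets implicit in the $\sum$-notation, are automatically members of the ultrafilters $\xi_i(t)$. Once this substitutes for the part played by $\varphi_{S_i}$ in the adequate setting, the combinatorial heart of the argument---carrying $(1_\ell)$, $(2_\ell)$, $(3_\ell)$ through the recursion---is word-for-word the proof of Theorem~\ref{Theorem:product-action}, so no genuinely new idea is required.
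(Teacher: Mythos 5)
Your proposal is correct and follows essentially the same route as the paper, which itself only remarks that ``the same proof as Theorem~\ref{Theorem:monochromatic-action}'' (via the recursion of Theorem~\ref{Theorem:product-action}) applies; you rightly isolate the one genuinely new point, namely that each $\xi_i(t)$, being a cofinal ultrafilter in $\beta_{\mathfrak{F}_i}S_{i,t}$, contains every member of $\mathfrak{F}_i$ and hence the constraint sets $\psi_{i,n}^{(\mathfrak{F})}(x_{i,0},\ldots,x_{i,n-1})$, which replaces the role of $\varphi_{S_i}$. Your reading of the hypothesis as $\xi_i\in(\beta_{\mathfrak{F}_i}S_i)_{\alpha_i}$ (rather than the literal $(\gamma S_i)_{\alpha_i}$ of the statement) is the intended one and is needed for exactly this step.
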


Suppose now that $T$ is a finite rooted tree endowed with the canonical tree
ordering.

\begin{definition}
\label{Definition:Ramsey-filtered}Suppose that $\alpha $ is an action of $T$
on an filtered semigroup $\left( S,+,\mathfrak{F}\right) $ given by a
semigroup $\mathcal{F}_{\alpha }\subset \mathrm{End}\left( S,+,\mathfrak{F}%
\right) $. We say that $\alpha $ is \emph{Ramsey }if, for any $\tau \in 
\mathcal{F}_{\alpha }$, the corresponding spine $f_{\tau }:T\rightarrow T$
is a regressive homomorphis, and for any $C\in \mathfrak{F}$, for any finite
coloring $c$ of $S$, and for any finite subset $\mathcal{F}_{0}$ of $%
\mathcal{F}_{\alpha }$, there exists a function $x:T\rightarrow S$ such
that, for any $\tau \in \mathcal{F}_{0}$ and $t\in T$, $x\left( t\right) \in
S_{t}\cap C$ and the color of $\tau \left( x\left( t\right) \right) $
depends only on $f_{\tau }\left( t\right) $.
\end{definition}

The same proofs as Theorem \ref{Theorem:monochromatic-tree-action} and
Theorem \ref{Theorem:product-action-filtered} give the following.

\begin{theorem}
\label{Theorem:monochromatic-tree-filtered}Suppose that $\alpha $ is an
action of a finite rooted tree on a filtered semigroup $\left( S,+,\mathfrak{%
F}\right) $ such that, for every $\tau \in \mathcal{F}_{\alpha }$, the
corresponding spine $f_{\tau }$ is a regressive homomorphism. The following
statements are equivalent:

\begin{enumerate}
\item $\alpha $ is Ramsey;

\item the action of $T$ on $\beta _{\mathfrak{F}}S$ induced by $\alpha $ is
Ramsey;

\item for any finite coloring $c$ of $S$, sequence $\left( \psi _{n}\right) $
of functions $\psi _{n}^{(\mathfrak{F})}:\left( S^{T}\right) ^{n}\rightarrow 
\mathfrak{F}$, and sequence $(\psi _{n}^{(\mathcal{F})})$ of functions $\psi
_{n}^{(\mathcal{F})}:\left( S^{T}\right) ^{n}\rightarrow \left[ \mathcal{F}%
\right] ^{<\aleph _{0}}$, there exist functions $x_{n}:T\rightarrow S$ such
that

\begin{itemize}
\item $x_{n}\left( t\right) \in S_{t}\cap \psi _{n}^{(\mathfrak{F})}\left(
x_{0},\ldots ,x_{n-1}\right) $ for every $n\in \omega $ and $t\in T$; and

\item for any $\ell \in \omega $, $n_{0}<n_{1}<\cdots <n_{\ell }\in \omega $%
, $t_{i}\in T$ for $i\leq \ell $, and $\tau _{i}\in \mathcal{F}_{n_{i}}$ for 
$i\leq \ell $, if $\left\{ f_{\tau _{i}}\left( t_{i}\right) :i\leq \ell
\right\} $ is a chain in $T$ with least element $t$, then the color of $\tau
_{0}\left( x_{n_{0}}\left( t_{0}\right) \right) +\cdots +\tau _{\ell }\left(
x_{n_{\ell }}\left( t_{\ell }\right) \right) $ depends only on $t$.
\end{itemize}
\end{enumerate}
\end{theorem}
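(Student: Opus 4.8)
The statement to prove is the filtered analogue of Theorem~\ref{Theorem:monochromatic-tree-action}, asserting the equivalence of (1) $\alpha$ being Ramsey, (2) the induced action on $\beta_{\mathfrak{F}}S$ being Ramsey, and (3) an iterated Ramsey-type conclusion. The plan is to mimic exactly the argument already given for adequate partial semigroups, replacing the role of $\gamma S$ by $\beta_{\mathfrak{F}}S$, the role of sets of the form $\varphi_S(S_0)$ by arbitrary members $C$ of the filter $\mathfrak{F}$, and the role of Theorem~\ref{Theorem:monochromatic-action} and Corollary~\ref{Corollary:product-action-tree} by Theorem~\ref{Theorem:product-action-filtered} (taking $m=k=1$ and $\lambda$ the identity). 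The key structural fact making this work is Proposition~\ref{Propositon:filterd-semigroup}, which guarantees that $\beta_{\mathfrak{F}}S$ is a closed subsemigroup of $\beta S$, hence a compact right topological semigroup to which Proposition~\ref{Proposition:tree} applies; combined with the order-preserving map $S_0 \mapsto \beta_{\mathfrak{F}}S_0$ from $\mathcal{S}(S,+,\mathfrak{F})$ to $\mathcal{S}(\beta_{\mathfrak{F}}S)$, this shows that an action $\alpha$ of $T$ on $(S,+,\mathfrak{F})$ whose spines are regressive homomorphisms induces an action of $T$ on $\beta_{\mathfrak{F}}S$ in the sense of Definition~\ref{Definition:rooted-action-compact}.

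First I would prove (2)$\Rightarrow$(1): given $\xi \in (\beta_{\mathfrak{F}}S)_{\alpha}$, a set $C \in \mathfrak{F}$, a finite $\mathcal{F}_0 \subset \mathcal{F}_{\alpha}$, and a finite coloring $c$ of $S$ with its canonical extension to $\beta S$, one has $\tau(\xi(t)) = \xi(f_\tau(t))$ for every $t \in T$ and $\tau \in \mathcal{F}_0$, so the color of $\tau(\xi(t))$ depends only on $f_\tau(t)$. For each fixed $t \in T$, using the ultrafilter quantifier $\xi(t)s$ one finds that the set of $s$ with $s \in S_t \cap C$ and $c(\tau(s)) = c(\xi(f_\tau(t)))$ for all $\tau \in \mathcal{F}_0$ lies in $\xi(t)$ (here I use that $C \in \mathfrak{F} \subset \xi(t)$ and that $S_t \in \xi(t)$ since $\xi(t) \in \beta_{\mathfrak{F}}S_t$); choosing such an $s =: x(t)$ for each $t$ produces the required function. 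For (1)$\Rightarrow$(2): given that $\alpha$ is Ramsey, for each finite $C \in \mathfrak{F}$, finite $\mathcal{F}_0$, and finite coloring $c$ one gets a function $x: T \to S$ with $x(t) \in S_t \cap C$ and $c(\tau(x(t)))$ depending only on $f_\tau(t)$; taking an ultrafilter limit over the directed family of all such data (the family of finite colorings, finite subsets of $\mathcal{F}_\alpha$, and members of $\mathfrak{F}$, ordered by refinement) along a cluster point in the compact space $(\beta S)^T$ yields $\xi: T \to \beta S$ with $\xi(t)$ containing every $C \in \mathfrak{F}$ (so $\xi(t) \in \beta_{\mathfrak{F}}S$) and $\xi(t) \in \beta_{\mathfrak{F}}S_t$, and with $c(\tau(\xi(t))) = c(\xi(f_\tau(t)))$ for every coloring $c$; since this holds for all finite colorings, $\tau(\xi(t)) = \xi(f_\tau(t))$, so $\xi \in (\beta_{\mathfrak{F}}S)_{\alpha}$ and the induced action is Ramsey.

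The equivalence with (3) then follows as in the proof of Theorem~\ref{Theorem:monochromatic-tree-action}: the implication (3)$\Rightarrow$(1) is immediate by specializing the sequence $(x_n)$ to have length one (a single function $x_0$), and (1) (equivalently (2)) $\Rightarrow$ (3) is obtained by first invoking Proposition~\ref{Proposition:tree} to upgrade the idempotent $\xi \in (\beta_{\mathfrak{F}}S)_{\alpha}$ furnished by (2) to an order-preserving idempotent, and then applying Theorem~\ref{Theorem:product-action-filtered} with $m = k = 1$, $\lambda = \mathrm{id}$, and the given sequences $(\psi_n^{(\mathfrak{F})})$ and $(\psi_n^{(\mathcal{F})})$; the conclusion of that theorem is precisely the monochromaticity statement required, once one observes that ``depends only on $t$'' is the coloring-independent reformulation of ``equal to the color of $\xi(t)$'' quantified over all finite colorings.

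The main obstacle is purely bookkeeping: one must check that the members of $\mathfrak{F}$ genuinely play the role that $\varphi_S(S_0)$ played in the partial-semigroup argument, i.e.\ that the filter axioms in Definition~\ref{Definition:filterd-semigroup}---in particular the second clause, which is exactly what Proposition~\ref{Propositon:filterd-semigroup} extracts as closure of $\beta_{\mathfrak{F}}S$ under $+$---are used at the single point where the recursion in Theorem~\ref{Theorem:product-action-filtered} needs the sum of idempotents to remain in the relevant ultrafilter. Since Proposition~\ref{Propositon:filterd-semigroup} and Theorem~\ref{Theorem:product-action-filtered} are already available, no new idea is required; the argument is a faithful transcription of the earlier proof with $\gamma S$, $\varphi_S$, and $[S]^{<\aleph_0}$ replaced throughout by $\beta_{\mathfrak{F}}S$, membership in $\mathfrak{F}$, and $\mathfrak{F}$ respectively.
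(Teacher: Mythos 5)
Your proposal is correct and follows essentially the same route as the paper, which itself proves this theorem only by remarking that the proofs of Theorem \ref{Theorem:monochromatic-tree-action} and Theorem \ref{Theorem:product-action-filtered} carry over verbatim; your transcription (replacing $\gamma S$ by $\beta_{\mathfrak{F}}S$, $\varphi_{S}(S_{0})$ by members of $\mathfrak{F}$, and invoking Proposition \ref{Propositon:filterd-semigroup}, Proposition \ref{Proposition:tree}, and Theorem \ref{Theorem:product-action-filtered} with $m=k=1$) is exactly the intended argument.
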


\begin{theorem}
\label{Theorem:product-tree-filtered}Suppose that $k,m\in \mathbb{N}$ and $%
\lambda :m\rightarrow k$ is a function.\ For $i\in k$ let $\left(
S_{i},+_{i},\mathfrak{F}_{i}\right) $ be a filtered semigroup, $T_{i}$ a
finite rooted tree set, and $\alpha _{i}$ a Ramsey action of $T_{i}$ on $%
\left( S_{i},+_{i},\mathfrak{F}_{i}\right) $. Set $S:=S_{\lambda \left(
0\right) }\times \cdots \times S_{\lambda \left( m-1\right) }$ and suppose
that $c$ is a finite coloring of $S$. Fix a sequence $(\psi _{i,n}^{(%
\mathfrak{F})})$ of functions $\psi _{i,n}^{(\mathfrak{F}%
)}:(S_{i}^{T_{i}})^{n}\rightarrow \mathfrak{F}_{i}$, and sequence $(\psi
_{i,n}^{(\mathcal{F})})$ of functions $\psi _{i,n}^{(\mathcal{F}%
)}:(S_{i}^{T_{i}})^{n}\rightarrow \left[ \mathcal{F}_{i}\right] ^{<\aleph
_{0}}$. Then there exist sequences $\left( x_{i,n}\right) _{n\in \omega }$
of functions $x_{i,n}:T_{i}\rightarrow S_{i}$ such that

\begin{itemize}
\item $x_{i,n}\left( t\right) \in S_{i,t}\cap \psi _{i,n}^{(\mathfrak{F}%
)}\left( x_{i,0},\ldots ,x_{i,n-1}\right) $ for every $n\in \omega $, $i\in
k $, and $t\in T_{i}$; and

\item for any $\ell \in \omega $, $i\in k$, $F_{0}<F_{1}<\cdots <F_{m-1}$,
sequences $\left( t_{i,n}\right) _{n\in \omega }$ in $T_{i}$ and $\left(
\tau _{i,n}\right) _{n\in \omega }$ in $\mathcal{F}_{\alpha _{i}}$ such that 
$\tau _{i,n}\in \psi _{i,n}^{(\mathcal{F})}\left( x_{0},\ldots
,x_{n-1}\right) $ for every $n\in \omega $, if $\{f_{\tau _{\lambda \left(
s\right) ,d}}\left( t_{\lambda \left( s\right) ,d}\right) :d\in F_{s}\}$ is
a chain in $T_{\lambda \left( s\right) }$ with least element $t_{s}$ for
every $s\in m$, then the color of%
\begin{equation*}
\left( \sum\nolimits_{d\in F_{s}}^{\lambda \left( s\right) }\tau _{\lambda
(s),d}\left( x_{\lambda (s),d}\left( t_{\lambda (s),d}\right) \right)
\right) _{s\in m}
\end{equation*}%
depends only on $\left( t_{s}\right) _{s\in m}$.
\end{itemize}
\end{theorem}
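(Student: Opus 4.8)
The plan is to obtain Theorem~\ref{Theorem:product-tree-filtered} from Theorem~\ref{Theorem:product-action-filtered} by feeding the latter a collection of order-preserving idempotents, in precise analogy with the way Corollary~\ref{Corollary:product-action-tree} was deduced from Theorem~\ref{Theorem:product-action} in the setting of adequate partial semigroups. First I would fix $i\in k$. Since $\alpha_{i}$ is a Ramsey action of the finite rooted tree $T_{i}$ on $\left(S_{i},+_{i},\mathfrak{F}_{i}\right)$, the implication (1)$\Rightarrow$(2) of Theorem~\ref{Theorem:monochromatic-tree-filtered}---a routine compactness argument, exactly as in the proof of the corresponding implication in Theorem~\ref{Theorem:monochromatic-tree-action}---shows that the action induced by $\alpha_{i}$ on the compact right topological semigroup $\beta_{\mathfrak{F}_{i}}S_{i}$ is a Ramsey action in the sense of Definition~\ref{Definition:rooted-action-compact}: its spines are the regressive homomorphisms $f_{\tau}$, and $\left(\beta_{\mathfrak{F}_{i}}S_{i}\right)_{\alpha_{i}}$ is nonempty. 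As $T_{i}$ is finite it has height $\leq\omega$, so Proposition~\ref{Proposition:tree} applies and yields an order-preserving idempotent $\xi_{i}\in\left(\beta_{\mathfrak{F}_{i}}S_{i}\right)_{\alpha_{i}}$.

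Next I would regard each tree $T_{i}$ as the finite ordered set $\mathbb{P}_{i}:=T_{i}$ with its canonical tree ordering and apply Theorem~\ref{Theorem:product-action-filtered} to the actions $\alpha_{i}$, the order-preserving idempotents $\xi_{i}$ just obtained, the finite coloring $c$ of $S$ (extended canonically to $\beta S$), and the given sequences $\bigl(\psi_{i,n}^{(\mathfrak{F})}\bigr)$ and $\bigl(\psi_{i,n}^{(\mathcal{F})}\bigr)$; since the hypotheses on the latter are phrased for arbitrary finite ordered sets, they transfer verbatim to $\mathbb{P}_{i}=T_{i}$. This produces sequences $\left(x_{i,n}\right)_{n\in\omega}$ of functions $x_{i,n}:T_{i}\rightarrow S_{i}$ with $x_{i,n}(t)\in S_{i,t}\cap\psi_{i,n}^{(\mathfrak{F})}(x_{i,0},\ldots,x_{i,n-1})$ for all $n\in\omega$, $i\in k$ and $t\in T_{i}$, and such that, whenever $\ell\in\omega$, $F_{0}<F_{1}<\cdots<F_{m-1}$ are finite subsets of $\omega$, $(t_{i,n})$ are sequences in $T_{i}$, $(\tau_{i,n})$ are sequences in $\mathcal{F}_{\alpha_{i}}$ with $\tau_{i,n}\in\psi_{i,n}^{(\mathcal{F})}(x_{i,0},\ldots,x_{i,n-1})$, and $\{f_{\tau_{\lambda(s),d}}(t_{\lambda(s),d}):d\in F_{s}\}$ is a chain in $T_{\lambda(s)}$ with least element $t_{s}$ for every $s\in m$, the color of
\begin{equation*}
\left(\sum\nolimits_{d\in F_{s}}^{\lambda(s)}\tau_{\lambda(s),d}\bigl(x_{\lambda(s),d}(t_{\lambda(s),d})\bigr)\right)_{s\in m}
\end{equation*}
equals the color of the tensor product $\xi_{\lambda(0)}(t_{0})\otimes\xi_{\lambda(1)}(t_{1})\otimes\cdots\otimes\xi_{\lambda(m-1)}(t_{m-1})\in\beta S$.

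Finally, since each $\xi_{i}$ is a fixed function, the ultrafilter $\xi_{\lambda(0)}(t_{0})\otimes\cdots\otimes\xi_{\lambda(m-1)}(t_{m-1})$ is completely determined by the tuple $(t_{0},\ldots,t_{m-1})=(t_{s})_{s\in m}$, so its color under the canonical extension of $c$ depends only on $(t_{s})_{s\in m}$; together with the previous step this is the asserted conclusion. I do not expect a genuine obstacle here: the argument is a routine assembly of Theorem~\ref{Theorem:monochromatic-tree-filtered}, Proposition~\ref{Proposition:tree} and Theorem~\ref{Theorem:product-action-filtered}. The only point demanding some care---implicit in the paper's phrase ``the same proofs''---is checking that the compactness argument and the recursive construction used in Theorems~\ref{Theorem:monochromatic-tree-action} and~\ref{Theorem:product-action-filtered} go through unchanged once $\varphi_{S}$ and $\gamma S$ are replaced by the filter $\mathfrak{F}$ and by $\beta_{\mathfrak{F}}S$, which is precisely what the two clauses of Definition~\ref{Definition:filterd-semigroup} (equivalently, Proposition~\ref{Propositon:filterd-semigroup}) guarantee.
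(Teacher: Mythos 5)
Your proposal is correct and follows essentially the same route the paper intends: the paper derives Theorem~\ref{Theorem:product-tree-filtered} by combining the Ramsey hypothesis (via the compactness argument of Theorem~\ref{Theorem:monochromatic-tree-filtered} and Proposition~\ref{Proposition:tree}, which supply the order-preserving idempotents $\xi_{i}$) with the recursive construction of Theorem~\ref{Theorem:product-action-filtered}, exactly as Corollary~\ref{Corollary:product-action-tree} is obtained from Theorem~\ref{Theorem:product-action} in the adequate partial semigroup setting. Your final observation that the tensor product $\xi_{\lambda(0)}(t_{0})\otimes\cdots\otimes\xi_{\lambda(m-1)}(t_{m-1})$ is determined by $(t_{s})_{s\in m}$ is precisely what turns the ``same color as'' conclusion into ``depends only on $(t_{s})_{s\in m}$.''
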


\subsection{The Hales-Jewett theorem for nonlocated words}

Suppose that $L$ is a set (\emph{alphabet}). Let $\mathrm{W}_{L}$ be the set
of finite strings of elements of $L$ (\emph{words}).\ Let also $v$ be a
symbol not in $L$ (\emph{variable}), and $\mathrm{W}_{Lv}$ be the set of
finite strings of elements of $L$ where $v$ appears (\emph{variable words}).
Then $S:=\mathrm{W}_{L}\cup \mathrm{W}_{Lv}$ is a cancellative semigroup
with respect to the operation $+$ of concatenation. Thus $\left( S,\mathfrak{%
F},+\right) $ is a filtered semigroup, where $\mathfrak{F}$ is the filter of
cofinite subsets of $S$.

Any element $a\in L$ defines a variable substitution map $\tau
_{a}:S\rightarrow S$ mapping any word $w$ to the word obtained from $w$ by
replacing every occurrence of the variable $v$ (if any) by $a$. This defines
a Ramsey action of the rooted tree with two nodes on $\left( S,\mathfrak{F}%
,+\right) $. It follows from Proposition \ref{Proposition:layered} that such
an action is Ramsey. Therefore Theorem \ref{Theorem:product-action-filtered}
in this case recovers the infinite Hales-Jewett theorem (for nonlocated
words) \cite[Theorem 1.1]{bergelson_partition_1994}---see also \cite[Theorem
2.35]{todorcevic_introduction_2010}.

One can also consider a generalization of such a result for sets of
variables indexed by a tree similar to Theorem \ref%
{Theorem:Gowers-Hales-Jewett}. Suppose that $T$ is a finite rooted tree with
root $r$, and let $T^{+}$ be the set of nodes of $T$ different from the
root. We think of the elements of $T^{+}$ as variables. Let also $L$ be a
set. Define \textrm{W}$_{L,r}^{T}$ to be the set of words with symbols from $%
L$. For $t\in T^{+}$ let $\mathrm{W}_{L,t}^{T}$ be the set of words $b$ with
symbols from $L\cup T^{+}$ such that the \emph{variables }that appear in $b$
form a nonempty chain in $T$ with least element $t$. We then let $\mathrm{W}%
_{L}^{T}$ be the set of all words in $L\cup T^{+}$. This is a cancellative
semigroup with respect to the concatenation operation $+$. We regard $\left( 
\mathrm{W}_{L}^{T},+\right) $ as a \emph{filtered }semigroup endowed with
the filter $\mathfrak{F}$ of cofinite subsets of \textrm{W}$_{L}^{T}$.

\begin{definition}
\label{Definition:variable-substitution-nonlocated}A \emph{variable
substitution map }is a function $\sigma $ from $T^{+}\cup L$ to $T^{+}\cup L$
such that

\begin{itemize}
\item $\sigma |_{L}$ is the identity function of $L$, and

\item the function $f_{\sigma }:T\rightarrow T$ defined by%
\begin{equation*}
t\mapsto \left\{ 
\begin{array}{ll}
\sigma \left( t\right) & \text{if }t\in T^{+}\text{ and }\sigma \left(
t\right) \in T^{+}\text{,} \\ 
r & \text{otherwise}%
\end{array}%
\right.
\end{equation*}%
is a regressive homomorphism of $T$.
\end{itemize}
\end{definition}

Any variable substitution map $\sigma $ defines a filtered semigroup
homomorphism $\tau _{\sigma }$ of $\left( \mathrm{W}_{L}^{T},+,\mathfrak{F}%
\right) $ defined as follows. For every $b\in \mathrm{W}_{L}^{T}$, $\tau
_{\sigma }\left( b\right) $ is the word obtained from $b$ replacing every
occurrence of $x$ with $\sigma \left( x\right) $ for every $x\in T^{+}\cup L$%
. The same argument as in Subsection \ref{Subsection:HJ} shows that such an
action is Ramsey; see Definition \ref{Definition:Ramsey-filtered}.

Therefore applying Theorem \ref{Theorem:product-tree-filtered} one obtains
the following result, which is a generalization of the infinite
Hales--Jewett theorem for nonlocated words \cite[Theorem 1.1]%
{bergelson_partition_1994}.

\begin{theorem}
\label{Theorem:nonlocated-Gowers-Hales-Jewett}Suppose that $T$ is a finite
tree, $L$ is a set, and $c$ is a finite coloring of $\mathrm{W}_{L}^{T}$.
For any sequence $\left( d_{n}\right) $ in $\omega $ and sequence $\left( 
\mathcal{F}_{n}\right) $ where $\mathcal{F}_{n}$ is a finite set of variable
substitution maps for $T,L$ as in Definition \ref%
{Definition:variable-substitution-nonlocated}, there exist sequences $\left(
b_{t,n}\right) _{n\in \omega }$ in $\mathrm{W}_{L,t}^{T}$ for $t\in T$ such
that

\begin{itemize}
\item for every $n\in \omega $ and $t\in T$, the length of $b_{t,n}$ is at
least $d_{n}$, and

\item for every $\ell \in \omega $, $n_{0}<\cdots <n_{\ell }\in \omega $, $%
t_{0},\ldots ,t_{\ell }\in T$, variable substitution maps $\sigma _{j}\in 
\mathcal{F}_{j}$ for $j\leq \ell $ such that $\left\{ f_{\sigma _{0}}\left(
t_{0}\right) ,\ldots ,f_{\sigma _{\ell }}\left( t_{\ell }\right) \right\} $
is a chain in $T$ with least element $t\in T$ one has that the color of%
\begin{equation*}
\tau _{\sigma _{0}}\left( b_{t_{0},n_{0}}\right) +\cdots +\tau _{\sigma
_{\ell }}\left( b_{t_{\ell },n_{\ell }}\right)
\end{equation*}%
depends only on $t$.
\end{itemize}
\end{theorem}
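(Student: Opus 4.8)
The plan is to derive the statement from Theorem~\ref{Theorem:product-tree-filtered} in the case $k=m=1$, $\lambda=\mathrm{id}$, applied to the single filtered semigroup $(S,+,\mathfrak{F})=(\mathrm{W}_{L}^{T},+,\mathfrak{F})$ equipped with the action $\alpha$ of $T$ by the variable substitution maps $\tau_{\sigma}$ of Definition~\ref{Definition:variable-substitution-nonlocated}. One takes $\psi_{n}^{(\mathfrak{F})}$ to be the constant function with value $\{w\in\mathrm{W}_{L}^{T}:\mathrm{length}(w)\geq d_{n}\}$, which belongs to the cofinite filter $\mathfrak{F}$ when $L$ is finite (and for infinite $L$ one uses instead the coarser filter generated by the sets of words of length at least $N$, which is again a legitimate filter for concatenation precisely because that operation is cancellative), and $\psi_{n}^{(\mathcal{F})}$ the constant function with value the finite subset $\{\tau_{\sigma}:\sigma\in\mathcal{F}_{n}\}$ of $\mathcal{F}_{\alpha}$. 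Since concatenation is everywhere defined, every sum $\tau_{\sigma_{0}}(b_{t_{0},n_{0}})+\dots+\tau_{\sigma_{\ell}}(b_{t_{\ell},n_{\ell}})$ is automatically defined, so after identifying $x_{n}$ with the function $t\mapsto b_{t,n}$ and relabeling the finite set $F_{0}=\{n_{0}<\dots<n_{\ell}\}$ the two items in the conclusion of Theorem~\ref{Theorem:product-tree-filtered} become exactly the two displayed items of the present statement.

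The one hypothesis of Theorem~\ref{Theorem:product-tree-filtered} that requires checking is that $\alpha$ is a Ramsey action in the sense of Definition~\ref{Definition:Ramsey-filtered}. First, $(\mathrm{W}_{L}^{T},+,\mathfrak{F})$ is a filtered semigroup: concatenation is cancellative, so for a set $C\in\mathfrak{F}$ and any $a$ the set $-a+C$ again lies in $\mathfrak{F}$, which gives at once both clauses of Definition~\ref{Definition:filterd-semigroup}. Each $\mathrm{W}_{L,t}^{T}$ is a filtered subsemigroup of $\mathrm{W}_{L}^{T}$: requiring the variables of a word $b$ to form a chain with least element $t$ amounts to requiring that $t$ occur in $b$ and that every other variable of $b$ be a proper ancestor of $t$, a condition preserved by concatenation; and $t\mapsto\mathrm{W}_{L,t}^{T}$ is order preserving, since whenever $t$ is a descendant of $t'$ the variables of an element of $\mathrm{W}_{L,t'}^{T}$ are all ancestors of $t$. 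The spines $f_{\sigma}$ are regressive homomorphisms by Definition~\ref{Definition:variable-substitution-nonlocated}, and each $\tau_{\sigma}$ maps $\mathrm{W}_{L,t}^{T}$ into $\mathrm{W}_{L,f_{\sigma}(t)}^{T}$ because a regressive homomorphism is monotone and carries each branch to itself. By the equivalence (1)$\Leftrightarrow$(2) of Theorem~\ref{Theorem:monochromatic-tree-filtered} it is thus enough to show that $(\beta_{\mathfrak{F}}\mathrm{W}_{L}^{T})_{\alpha}$ is nonempty.

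To produce an element of $(\beta_{\mathfrak{F}}\mathrm{W}_{L}^{T})_{\alpha}$ I would repeat the construction already carried out in Subsection~\ref{Subsection:HJ} and Subsection~\ref{Subsection:common}. Fix a minimal idempotent $\xi(r)$ of $\beta_{\mathfrak{F}}\mathrm{W}_{L}$ (these exist by the theorem of Ellis). For each node $t_{0}$ of height $1$ one has $\mathrm{W}_{L,t_{0}}^{T}\leq\mathrm{W}_{L}$, so Lemma~\ref{Lemma:minimal-lift} provides a minimal idempotent $\xi(t_{0})\leq\xi(r)$ of $\beta_{\mathfrak{F}}\mathrm{W}_{L,t_{0}}^{T}$; minimality of $\xi(r)$, together with the facts that $\tau_{\sigma}$ is the identity on $\mathrm{W}_{L}$ and that homomorphisms preserve the order on idempotents, forces $\tau_{\sigma}(\xi(t_{0}))=\xi(r)$ whenever $f_{\sigma}(t_{0})=r$. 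For a node $t$ of height at least $2$ let $t_{0}$ be its height-$1$ ancestor and let $\psi_{t}\colon\mathrm{W}_{L,t_{0}}^{T}\to\mathrm{W}_{L,t}^{T}$ be the injective homomorphism renaming $t_{0}$ to $t$; put $\xi(t):=\psi_{t}(\xi(t_{0}))$. One then verifies $\tau_{\sigma}\circ\xi=\xi\circ f_{\sigma}$ for every variable substitution map $\sigma$ by distinguishing whether $f_{\sigma}(t)\in T^{+}$ or $f_{\sigma}(t)=r$: in the first case $\tau_{\sigma}\circ\psi_{t}$ agrees on $\mathrm{W}_{L,t_{0}}^{T}$ with $\psi_{f_{\sigma}(t)}$, and in the second with $\tau_{\sigma'}$ for a variable substitution map $\sigma'$ collapsing $t_{0}$ to $\sigma(t)$, for which $f_{\sigma'}(t_{0})=r$ and the height-$1$ case applies. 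Hence $\xi\in(\beta_{\mathfrak{F}}\mathrm{W}_{L}^{T})_{\alpha}$; by Proposition~\ref{Proposition:tree} it may even be taken order preserving, though this last step is already built into Theorem~\ref{Theorem:product-tree-filtered} and so is not needed here.

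The routine part of all this is the reduction in the first paragraph and the structural verifications in the second. The part that needs genuine care---though it is no harder than in the located case of Subsection~\ref{Subsection:common}, and in fact slightly easier because concatenation is total---is the case analysis in the third paragraph establishing $\tau_{\sigma}\circ\xi=\xi\circ f_{\sigma}$, which is the real combinatorial content of the argument. A secondary point, not a true obstacle, is that the maps $\tau_{\sigma}$ and $\psi_{t}$ extend continuously to the relevant Stone--\v{C}ech compactifications and respect the filter $\mathfrak{F}$ because they preserve the length of a word, hence are finite-to-one on each length slice; once this and the above are in hand, the deduction from Theorem~\ref{Theorem:product-tree-filtered} is purely formal.
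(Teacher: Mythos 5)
Your proposal is correct and follows essentially the same route as the paper: the paper likewise realizes $\left( \mathrm{W}_{L}^{T},+\right) $ as a filtered semigroup acted on by the variable substitution maps of Definition \ref{Definition:variable-substitution-nonlocated}, verifies that this action is Ramsey by the same minimal-idempotent-plus-renaming-homomorphism construction as in Subsection \ref{Subsection:common}, and then invokes Theorem \ref{Theorem:product-tree-filtered}. Your remark that for infinite $L$ the sets of words of length at least $d_{n}$ are not cofinite, so that one should work with the filter generated by the length-$\geq N$ sets, is a sensible refinement of a point the paper leaves implicit.
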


The infinite Hales-Jewett theorem for nonlocated words \cite[Theorem 1.1]%
{bergelson_partition_1994} is the particular instance of Theorem \ref%
{Theorem:Gowers-Hales-Jewett} where $T$ is the rooted tree with only two
nodes.

More generally from Corollary \ref{Corollary:product-action-tree} one can
deduce the following generalization of Theorem \ref%
{Theorem:Gowers-Hales-Jewett}.

\begin{theorem}
\label{Theorem:product-Gowers-Hales-Jewett-nonlocated}Suppose that $m,k\in 
\mathbb{N}$ and $\lambda :m\rightarrow k$ is a function. Let $T_{i}$ be a
finite rooted tree and $L_{i}$ be a set for $i\in k$. Fix a finite coloring $%
c$ of $\mathrm{W}_{L_{\lambda \left( 0\right) }}^{T_{\lambda \left( 0\right)
}}\times \cdots \times \mathrm{W}_{L_{\lambda \left( m-1\right)
}}^{T_{\lambda \left( m-1\right) }}$. For any sequence $\left( d_{n}\right) $
of elements of $\omega $, $i\in k$, sequence $\left( \mathcal{F}%
_{i,n}\right) $ of finite sets of variable substitution maps for $%
T_{i},L_{i} $ in the sense of Definition \ref%
{Definition:variable-substitution-nonlocated}, and for every $t\in T_{i}$,
there exist sequences $\left( b_{i,t,n}\right) _{n\in \omega }$ in $\mathrm{%
FIN}_{L_{i},t}^{T_{i}}$ such that

\begin{itemize}
\item the length of $b_{i,t,n}$ is at least $d_{n}$,

\item for any $F_{0}<F_{1}<\cdots <F_{m-1}$, nodes $t_{i,d}$ of $T_{i}$ and
variable substitution maps $\sigma _{i,d}\in \mathcal{F}_{i,d}$ for $d\in
\omega $ and $i\in k$ such that $\{f_{\sigma _{\lambda \left( s\right)
,d}}\left( t_{\lambda \left( s\right) ,d}\right) :d\in F_{s}\}$ is a chain
in $T_{\lambda \left( s\right) }$ with least element $t_{s}$ for every $s\in
m$, the color of 
\begin{equation*}
\left( \sum_{d\in F_{s}}\tau _{\sigma _{\lambda \left( s\right)
,d}}(b_{f\left( s\right) ,t_{\lambda \left( s\right) },d})\right) _{s\in m}
\end{equation*}%
depends only on $\left( t_{s}\right) _{s\in m}$.
\end{itemize}
\end{theorem}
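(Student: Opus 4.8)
The plan is to derive this exactly as Theorem~\ref{Theorem:product-Gowers-Hales-Jewett} was obtained in the located case, only now invoking the filtered master theorem, Theorem~\ref{Theorem:product-tree-filtered}, in place of Corollary~\ref{Corollary:product-action-tree}. First I fix, for each $i\in k$, the filtered semigroup $\left(\mathrm{W}_{L_i}^{T_i},+_i,\mathfrak{F}_i\right)$, where $+_i$ is concatenation and $\mathfrak{F}_i$ is the filter of cofinite subsets (this is a filtered semigroup since concatenation is cancellative), and on it the action $\alpha_i$ of $T_i$ whose order-preserving map $T_i\to\mathcal{S}\left(\mathrm{W}_{L_i}^{T_i},+_i,\mathfrak{F}_i\right)$ sends $t\in T_i^{+}$ to $\mathrm{W}_{L_i,t}^{T_i}$ (and the root $r$ to $\mathrm{W}_{L_i,r}^{T_i}=\mathrm{W}_{L_i}$), and whose semigroup $\mathcal{F}_{\alpha_i}\subset\mathrm{End}\left(\mathrm{W}_{L_i}^{T_i},+_i,\mathfrak{F}_i\right)$ is generated by the maps $\tau_\sigma$ for $\sigma$ a variable substitution map as in Definition~\ref{Definition:variable-substitution-nonlocated}, each $\tau_\sigma$ having spine the regressive homomorphism $f_\sigma$. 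As recalled just before Theorem~\ref{Theorem:nonlocated-Gowers-Hales-Jewett}---which repeats the argument of Subsection~\ref{Subsection:HJ}: lift a minimal idempotent of $\beta_{\mathfrak{F}_i}\mathrm{W}_{L_i}$ by Lemma~\ref{Lemma:minimal-lift} and push it up the tree along the maps $\psi_t$---each $\alpha_i$ is a Ramsey action.

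Next I apply Theorem~\ref{Theorem:product-tree-filtered} to $\lambda\colon m\to k$ and these actions, taking $\psi_{i,n}^{(\mathcal{F})}$ to be the constant function with value the finite set $\left\{\tau_\sigma:\sigma\in\mathcal{F}_{i,n}\right\}\subset\mathcal{F}_{\alpha_i}$ and $\psi_{i,n}^{(\mathfrak{F})}$ to be the constant function with value the cofinite set $\left\{b\in\mathrm{W}_{L_i}^{T_i}:\text{the length of }b\text{ is at least }d_n\right\}\in\mathfrak{F}_i$ (if one wants the stronger conclusion that lengths increase, one instead lets $\psi_{i,n}^{(\mathfrak{F})}$ depend on $x_{i,0},\dots,x_{i,n-1}$). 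Theorem~\ref{Theorem:product-tree-filtered} produces sequences $\left(x_{i,n}\right)_{n\in\omega}$ of functions $x_{i,n}\colon T_i\to\mathrm{W}_{L_i}^{T_i}$, and I set $b_{i,t,n}:=x_{i,n}(t)$. Its first clause gives $b_{i,t,n}\in\mathrm{W}_{L_i,t}^{T_i}$ of length at least $d_n$, which is the first bullet. For the second bullet, given $F_0<\cdots<F_{m-1}$, nodes $t_{i,d}\in T_i$ and substitution maps $\sigma_{i,d}\in\mathcal{F}_{i,d}$, I put $\tau_{i,d}:=\tau_{\sigma_{i,d}}\in\psi_{i,d}^{(\mathcal{F})}(\dots)$; since $f_{\tau_{i,d}}=f_{\sigma_{i,d}}$, the hypothesis that $\{f_{\sigma_{\lambda(s),d}}(t_{\lambda(s),d}):d\in F_s\}$ be a chain with least element $t_s$ is verbatim the one appearing in Theorem~\ref{Theorem:product-tree-filtered}, and $\sum_{d\in F_s}^{\lambda(s)}\tau_{\lambda(s),d}\left(x_{\lambda(s),d}(t_{\lambda(s),d})\right)=\sum_{d\in F_s}\tau_{\sigma_{\lambda(s),d}}\left(b_{\lambda(s),t_{\lambda(s),d},d}\right)$. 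Hence the color of the displayed $m$-tuple depends only on $(t_s)_{s\in m}$, as required.

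I do not expect a genuine obstacle: the substantive content is wholly contained in Theorem~\ref{Theorem:product-tree-filtered} together with the fact---already established via Proposition~\ref{Proposition:layered} and the arguments of Subsections~\ref{Subsection:HJ} and~\ref{Subsection:common}---that each $\alpha_i$ is a Ramsey action of $T_i$ on the filtered semigroup $\left(\mathrm{W}_{L_i}^{T_i},+_i,\mathfrak{F}_i\right)$. The only care needed is the bookkeeping: choosing $\psi_{i,n}^{(\mathfrak{F})}$ and $\psi_{i,n}^{(\mathcal{F})}$ so that they encode exactly the length constraints and the prescribed finite families $\mathcal{F}_{i,n}$ of variable substitution maps, and verifying that the ``chain with least element $t_s$'' conditions line up between the two statements. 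This is routine.
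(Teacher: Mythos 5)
Your proposal is correct and follows essentially the same route the paper intends: realize each $\alpha_i$ as a Ramsey action of $T_i$ on the filtered semigroup $\left(\mathrm{W}_{L_i}^{T_i},+_i,\mathfrak{F}_i\right)$ with the cofinite filter, and then specialize the product theorem for filtered semigroups with the constant choices of $\psi_{i,n}^{(\mathfrak{F})}$ and $\psi_{i,n}^{(\mathcal{F})}$ encoding the length bounds $d_n$ and the families $\mathcal{F}_{i,n}$. Note that the paper's preamble to this theorem nominally cites Corollary \ref{Corollary:product-action-tree} (the unfiltered product corollary), which appears to be a slip carried over from the located case; your invocation of Theorem \ref{Theorem:product-tree-filtered} is the correct reference, since the nonlocated setting genuinely requires the filtered framework to force the length constraints.
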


\section{A polynomial Gowers' Ramsey theorem\label{Section:polynomial}}

\subsection{Extended polynomials\label{Subsection:extended}}

We follow the approach and notation of \cite{bergelson_polynomial_2014}.
Suppose that $\left( S,\mathfrak{F}\right) $ is a filtered set. Let $%
\mathfrak{S}$ be a collection of partial semigroups operations on $S$ such
that, for every $+\in \mathfrak{S}$, $\left( S,+,\mathfrak{F}\right) $ is a
filtered semigroup. The space $\beta _{\mathfrak{F}}S$ of cofinite
ultrafilters over $\left( S,\mathfrak{F}\right) $ is then a semigroup with
respect to the canonical extension to $\beta S$ of any of the operations in $%
\mathfrak{S}$. In the following if $a\in S$ and $\mathfrak{G}$ is a filter
over $S$ we let $a+\mathfrak{G}$ be the image of $\mathfrak{G}$ under the
left translation map $x\mapsto a+x$. Explicitly, $C\in a+\mathfrak{G}$ if
and only if $-a+C=\left\{ x\in S:a+x\in C\right\} \in \mathfrak{G}$.
Similarly we define $\mathfrak{G}+a$ in terms of the right translation map $%
x\mapsto x+a$.

One can define as in \cite[Definition 3.1]{bergelson_polynomial_2014} the
set $\mathfrak{P}$ of \emph{extended polynomials }in the variables $%
x_{0},x_{1},\ldots $\emph{\ }corresponding to the set $\mathfrak{S}$ by
induction on the degree as follows:

\begin{enumerate}
\item $x_{n}$ is an extended polynomial for every $n\in \omega $;

\item if $+\in \mathfrak{S}$ and $p\left( x_{0},\ldots ,x_{n-1}\right) $ and 
$q\left( x_{n},\ldots ,x_{n+m-1}\right) $ are extended polynomials, then $%
p\left( x_{0},\ldots ,x_{n-1}\right) +q\left( x_{n},\ldots ,x_{n+m-1}\right) 
$ is an extended polynomial;

\item if $+\in \mathfrak{S}$, $p\left( x_{0},\ldots ,x_{n-1}\right) $ is an
extended polynomial, and $a\in \mathfrak{S}$ is such that $\mathfrak{F}%
+a\supset \mathfrak{F}$, then $p\left( x_{0},\ldots ,x_{n-1}\right) +a$ is
an extended polynomial;

\item if $+\in \mathfrak{S}$, $p\left( x_{0},\ldots ,x_{n-1}\right) $ is an
extended polynomial, and $a\in \mathfrak{S}$ is such that $a+\mathfrak{F}%
\supset \mathfrak{F}$ then $a+p\left( x_{0},\ldots ,x_{n-1}\right) $ is an
extended polynomial.
\end{enumerate}

Any extended polynomial $p\left( x_{0},\ldots ,x_{n-1}\right) $ defines a
polynomial mapping $f_{p}:S^{n}\rightarrow S$ in the obvious way.\ One can
then consider its canonical extension to a continuous function $f_{p}:\beta
\left( S^{n}\right) \rightarrow \beta S$. One can also evaluate a polynomial 
$p\left( x_{0},\ldots ,x_{n-1}\right) $ at a tuple $\left( \mathcal{U}%
_{0},\ldots ,\mathcal{U}_{n-1}\right) $ of elements of $\beta _{\mathfrak{F}%
}S$ by interpreting the operations in $\mathfrak{S}$ as their canonical
extensions to right topological semigroup operations on $\beta _{\mathfrak{F}%
}S$. The following proposition is the analog of Theorem \cite[Theorem 3.2]%
{bergelson_polynomial_2014} in this context. The proof is entirely
analogous, and it is presented here for convenience of the reader.

\begin{proposition}
\label{Proposition:extended-polynomial}Suppose that $p\left( x_{0},\ldots
,x_{n-1}\right) $ is an extended polynomial, and $f_{p}:S^{n}\rightarrow S$
is the corresponding polynomial mapping. Then for every $\mathcal{U}%
_{0},\ldots ,\mathcal{U}_{n-1}\in \beta _{\mathfrak{F}}S$ we have that $%
p\left( \mathcal{U}_{0},\ldots ,\mathcal{U}_{n-1}\right) \in \beta _{%
\mathfrak{F}}S$ is equal to $f_{p}\left( \mathcal{U}_{0}\otimes \cdots
\otimes \mathcal{U}_{n-1}\right) $.
\end{proposition}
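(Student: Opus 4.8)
The plan is to proceed by induction on the degree (i.e.\ the structure) of the extended polynomial $p$, following the inductive definition of $\mathfrak{P}$ given above. The base case is $p = x_{n}$, where $f_{p}$ is a coordinate projection $S^{n}\to S$; here $p(\mathcal{U}_{0},\ldots,\mathcal{U}_{n-1}) = \mathcal{U}_{n}$ by definition, and $f_{p}(\mathcal{U}_{0}\otimes\cdots\otimes\mathcal{U}_{n-1})$ is the pushforward of the tensor product under the projection, which is exactly $\mathcal{U}_{n}$ by the defining property of the tensor product recalled in Subsection~\ref{Subsection:products}. So the base case is immediate.

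For the inductive step, the key identity to exploit is the one relating tensor products to the semigroup operation on $\beta_{\mathfrak{F}}S$: for ultrafilters $\mathcal{U},\mathcal{V}\in\beta_{\mathfrak{F}}S$ and an operation $+\in\mathfrak{S}$, one has $\mathcal{U}+\mathcal{V} = \mathrm{add}(\mathcal{U}\otimes\mathcal{V})$, where $\mathrm{add}\colon S\times S\to S$ is the addition map and $\mathrm{add}$ also denotes its continuous extension $\beta(S\times S)\to\beta S$. This is essentially the definition of the operation on $\gamma S$ (or $\beta_{\mathfrak{F}}S$) as recalled in Subsection~\ref{Subsection:cofinal-ultrafilter}, rewritten in terms of pushforwards. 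More generally, for a continuous map $g\colon\beta(S^{a}\times S^{b})\to\beta S$ factoring through maps on each factor, pushforward commutes with tensor products: if $g = h\circ(g_{1}\times g_{2})$ on the level of $S$-points, then on ultrafilters $g(\mathcal{W}_{1}\otimes\mathcal{W}_{2}) = h(g_{1}(\mathcal{W}_{1})\otimes g_{2}(\mathcal{W}_{2}))$; this follows from the fact that the map $(\mathcal{U},\mathcal{V})\mapsto\mathcal{U}\otimes\mathcal{V}$ is continuous in the first variable and continuous in the second variable, together with density of principal ultrafilters.

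Granted these tensor-product bookkeeping facts, each of the four clauses is handled uniformly. For clause (2), write $p = p_{1}(x_{0},\ldots,x_{n-1}) + p_{2}(x_{n},\ldots,x_{n+m-1})$ with $+\in\mathfrak{S}$. By the inductive hypothesis applied to $p_{1}$ and to $p_{2}$ we have $p_{1}(\mathcal{U}_{0},\ldots,\mathcal{U}_{n-1}) = f_{p_{1}}(\mathcal{U}_{0}\otimes\cdots\otimes\mathcal{U}_{n-1})$ and likewise for $p_{2}$, and then
\begin{align*}
p(\mathcal{U}_{0},\ldots,\mathcal{U}_{n+m-1}) &= f_{p_{1}}(\mathcal{U}_{0}\otimes\cdots\otimes\mathcal{U}_{n-1}) + f_{p_{2}}(\mathcal{U}_{n}\otimes\cdots\otimes\mathcal{U}_{n+m-1})\\
&= \mathrm{add}\bigl(f_{p_{1}}(\mathcal{U}_{0}\otimes\cdots\otimes\mathcal{U}_{n-1})\otimes f_{p_{2}}(\mathcal{U}_{n}\otimes\cdots\otimes\mathcal{U}_{n+m-1})\bigr)\\
&= f_{p}(\mathcal{U}_{0}\otimes\cdots\otimes\mathcal{U}_{n+m-1}),
\end{align*}
the last step using associativity of $\otimes$ and the factorization principle above (with $g_{1} = f_{p_{1}}$, $g_{2} = f_{p_{2}}$, $h = \mathrm{add}$). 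Clauses (3) and (4) are the special cases where one of the two factors is a fixed principal ultrafilter $a$; the hypotheses $\mathfrak{F}+a\supset\mathfrak{F}$ (resp.\ $a+\mathfrak{F}\supset\mathfrak{F}$) are exactly what guarantees that right (resp.\ left) translation by $a$ maps $\beta_{\mathfrak{F}}S$ into itself, so that the expression $p(\mathcal{U}_{0},\ldots,\mathcal{U}_{n-1})+a$ (resp.\ $a+p(\ldots)$) lands in $\beta_{\mathfrak{F}}S$ and the identity $\mathcal{W}+a = \mathrm{add}(\mathcal{W}\otimes a)$ (resp.\ $a+\mathcal{W} = \mathrm{add}(a\otimes\mathcal{W})$) applies; the rest is as in clause (2).

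The step I expect to be the main obstacle is the careful verification of the ``factorization principle'' for pushforwards of tensor products through a continuous map that is built coordinatewise from maps on the factors --- in particular keeping track of \emph{which variable} the semigroup operation on $\beta_{\mathfrak{F}}S$ is continuous in, since it is only continuous in the left variable (for fixed right argument) when the right argument lies in $\gamma S$. One must argue by first fixing the right-hand ultrafilter and reducing the left argument to principal ultrafilters (where the identity is just the definition of $\otimes$ with a principal first coordinate), then passing to the general left argument by continuity and density, and finally doing the symmetric reduction on the right. Since this is entirely parallel to the argument in \cite[Theorem~3.2]{bergelson_polynomial_2014}, I will state the key continuity/density observations and then let the four-clause induction run as above without belaboring the routine manipulations.
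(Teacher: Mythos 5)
Your proposal is correct and follows the same overall strategy as the paper: structural induction on the extended polynomial, with the base case $x_n$ immediate and the four formation clauses handled via the interaction between $\otimes$ and the operations in $\mathfrak{S}$. The one genuine difference is in how the key identity $f_{p+q}(\mathcal{U}\otimes\mathcal{V})=f_p(\mathcal{U})+f_q(\mathcal{V})$ is verified: the paper simply unwinds the ultrafilter quantifiers in a chain of equivalences ($C\in f_r(\mathcal{U}\otimes\mathcal{V})$ iff $(\mathcal{U}\overline{a})(\mathcal{V}\overline{b})\,f_p(\overline{a})+f_q(\overline{b})\in C$ iff $\dots$), whereas you route the argument through an abstract ``pushforward commutes with $\otimes$'' lemma proved by continuity and density of principal ultrafilters. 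Both work, but the quantifier computation is shorter and avoids a pitfall in your formulation: the map $(\mathcal{U},\mathcal{V})\mapsto\mathcal{U}\otimes\mathcal{V}$ is \emph{not} continuous in the second variable for a general fixed first coordinate (only when the first coordinate is principal), so the two-step reduction you sketch in your last paragraph must be carried out in the order (principal, principal) $\to$ (principal, general) $\to$ (general, general); your phrasing suggests reducing the left argument first, which would require the very case you have not yet established. Since every identity you need (including $\mathcal{U}+\mathcal{V}=\mathrm{add}(\mathcal{U}\otimes\mathcal{V})$ and $(g_1\times g_2)(\mathcal{W}_1\otimes\mathcal{W}_2)=g_1(\mathcal{W}_1)\otimes g_2(\mathcal{W}_2)$) is definitional at the level of quantifiers, I would recommend dispensing with the continuity/density machinery altogether, as the paper does.
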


\begin{proof}
In the course of the proof, we denote tuples of variables $x_{0},\ldots
,x_{n-1}$ and $y_{0},\ldots ,y_{m-1}$ by $\overline{x}$ and $\overline{y}$
respectively. The proof is naturally by induction on the complexity of the
given extended polynomial. Clearly the conclusion is true for the polynomial 
$x_{n}$.

Suppose that the conclusion is true for $p\left( \overline{x}\right) $ and $%
q\left( \overline{y}\right) $. Fix $+\in \mathfrak{S}$ and set $r\left( 
\overline{x},\overline{y}\right) :=p\left( \overline{x}\right) +q\left( 
\overline{y}\right) $. Let $\mathcal{U}_{0},\ldots ,\mathcal{U}_{n-1}$ and $%
\mathcal{V}_{0},\ldots ,\mathcal{V}_{m-1}$ be elements of $\beta _{\mathfrak{%
F}}\left( S\right) $. Set $\mathcal{U}:=\mathcal{U}_{0}\otimes \cdots
\otimes \mathcal{U}_{n-1}$ and $\mathcal{V}:=\mathcal{V}_{0}\otimes \cdots
\otimes \mathcal{V}_{m-1}$. Then we have that%
\begin{equation*}
r\left( \mathcal{U}_{0},\ldots ,\mathcal{U}_{n-1},\mathcal{V}_{0},\ldots ,%
\mathcal{V}_{m-1}\right) =p\left( \mathcal{U}_{0},\ldots ,\mathcal{U}%
_{n-1}\right) +q\left( \mathcal{V}_{0},\ldots ,\mathcal{V}_{m-1}\right) \in
\beta _{\mathfrak{F}}S
\end{equation*}%
since $\left( \beta _{\mathfrak{F}}S,+\right) $ is a semigroup. We now want
to prove that $r\left( \mathcal{U}_{0},\ldots ,\mathcal{U}_{n-1},\mathcal{V}%
_{0},\ldots ,\mathcal{V}_{m-1}\right) =f_{r}\left( \mathcal{U}\otimes 
\mathcal{V}\right) $. We denote by $\overline{a}$ and $\overline{b}$ tuples $%
\left( a_{0},\ldots ,a_{n-1}\right) $ and $\left( b_{0},\ldots
,b_{m-1}\right) $ of elements of $S$. Suppose that $C\in S$. We have $C\in
f_{r}\left( \mathcal{U}\otimes \mathcal{V}\right) $ if and only if $\left( 
\mathcal{U}\otimes \mathcal{V}\right) \left( \overline{a},\overline{b}%
\right) $, $f_{r}\left( \overline{a},\overline{b}\right) \in C$, if and only
if $\left( \mathcal{U}\overline{a}\right) $, $\left( \mathcal{V}\overline{b}%
\right) $, $f_{r}\left( \overline{a},\overline{b}\right) \in C$, if and only
if $\left( \mathcal{U}\overline{a}\right) $, $\left( \mathcal{V}\overline{b}%
\right) $, $f_{p}\left( \overline{a}\right) +f_{q}\left( \overline{b}\right)
\in C$, if and only if $\left( f_{p}\left( \mathcal{U}\right) x\right) $, $%
\left( f_{q}\left( \mathcal{V}\right) y\right) $, $x+y\in C$, if and only if
(using the inductive assumption) $\left( p\left( \mathcal{U}_{0},\ldots ,%
\mathcal{U}_{n-1}\right) x\right) $, $\left( q\left( \mathcal{V}_{0},\ldots ,%
\mathcal{V}_{m-1}\right) y\right) $, $x+y\in C$, if and only if $\left(
\left( p\left( \mathcal{U}_{0},\ldots ,\mathcal{U}_{n-1}\right) +q\left( 
\mathcal{V}_{0},\ldots ,\mathcal{V}_{m-1}\right) \right) z\right) $ $z\in C$%
, if and only if $C\in r\left( \mathcal{U}_{0},\ldots ,\mathcal{U}_{n-1},%
\mathcal{V}_{0},\ldots ,\mathcal{V}_{m-1}\right) =p\left( \mathcal{U}%
_{0},\ldots ,\mathcal{U}_{n-1}\right) +q\left( \mathcal{V}_{0},\ldots ,%
\mathcal{V}_{m-1}\right) $. This concludes the proof that $f_{r}\left( 
\mathcal{U}\otimes \mathcal{V}\right) =r\left( \mathcal{U}_{0},\ldots ,%
\mathcal{U}_{n-1},\mathcal{V}_{0},\ldots ,\mathcal{V}_{m-1}\right) $.

Suppose now that the conclusion is true for $p\left( \overline{y}\right) $
and $a\in S$. Let $+\in \mathfrak{S}$ be such that $a+\mathfrak{F}\supset 
\mathfrak{F}$. We prove that the conclusion holds for $q\left( \overline{y}%
\right) :=a+p\left( \overline{y}\right) $. The proof that the conclusion
holds for $p\left( \overline{y}\right) +a$ under the assumption that $%
\mathfrak{F}+a\supset \mathfrak{F}$ is analogous. Suppose that $\mathcal{V}%
_{0},\ldots ,\mathcal{V}_{m-1}\in \beta _{\mathfrak{F}}S$. Then by recursive
assumption we have that $p\left( \mathcal{V}_{0},\ldots ,\mathcal{V}%
_{m-1}\right) \in \beta _{\mathfrak{F}}S$. Therefore $a+p\left( \mathcal{V}%
_{0},\ldots ,\mathcal{V}_{m-1}\right) \supset a+\mathfrak{F}\supset 
\mathfrak{F}$ and hence $a+p\left( \mathcal{V}_{0},\ldots ,\mathcal{V}%
_{m-1}\right) \in \beta _{\mathfrak{F}}S$. Set $\mathcal{V}:=\mathcal{V}%
_{0}\otimes \cdots \otimes \mathcal{V}_{m-1}$. We now prove that $%
f_{q}\left( \mathcal{V}\right) =a+p\left( \mathcal{V}_{0},\ldots ,\mathcal{V}%
_{m-1}\right) $. Suppose that $C$ is a subset of $S$. We have that $C\in
f_{q}\left( \mathcal{V}\right) $ if and only if $\left( \mathcal{V}\overline{%
b}\right) $ $f_{q}\left( \overline{b}\right) \in C$ if and only if $\left( 
\mathcal{V}\overline{b}\right) $ $a+f_{p}\left( \overline{b}\right) \in C$
if and only if $\left( f_{p}\left( \mathcal{V}\right) z\right) $ $a+z\in C$
if and only if (using the inductive hypothesis) $\left( p\left( \mathcal{V}%
_{0},\ldots ,\mathcal{V}_{n-1}\right) z\right) $ $a+z\in C$, if and only if $%
\left( \left( a+p\left( \mathcal{V}_{0},\ldots ,\mathcal{V}_{n-1}\right)
\right) z\right) $ $z\in C$. This shows that $f_{q}\left( \mathcal{V}\right)
=a+p\left( \mathcal{V}_{0},\ldots ,\mathcal{V}_{n-1}\right) =q\left( 
\mathcal{V}_{0},\ldots ,\mathcal{V}_{n-1}\right) $, concluding the proof.
\end{proof}

\subsection{A polynomial Ramsey theorem for actions of trees}

In this section we provide a generalization of the polynomial
Milliken-Taylor theorem of Bergelson, Hindman, and Williams \cite[Corollary
3.5]{bergelson_polynomial_2014} to the setting of actions of trees on
filtered semigroups. In the following we suppose as above that $m,k\in 
\mathbb{N}$ and $\lambda :m\rightarrow k$ is a function. Suppose that $%
\left( S,\mathfrak{F}\right) $ is a filtered set, and $\mathfrak{S}$ is a
collection of semigroup operations as in Subsection \ref{Subsection:extended}%
. Let $p\left( x_{0},\ldots ,x_{m-1}\right) $ be an extended polynomial
defined with respect to $\mathfrak{S}$ and $\mathfrak{F}$. For $i\in k$ we
let $T_{i}$ be a finite tree and $+_{i}$ be an element of $\mathfrak{S}$.
Consider for $i\in k$ a Ramsey action $\alpha _{i}$ of $T_{i}$ on $\left(
S,+_{i},\mathfrak{F}\right) $ in the sense of Definition \ref%
{Definition:action-filtered}. This is given by an order-preserving
assignment \ $T\rightarrow \mathcal{S}\left( S,+_{i},\mathfrak{F}_{i}\right) 
$, $t\mapsto S_{i,t}$, and a semigroup $\mathcal{F}_{\alpha _{i}}$ of
filtered semigroup homomorphisms of $\left( S,+_{i},\mathfrak{F}_{i}\right) $
such that every $\tau \in \mathcal{F}_{\alpha _{i}}$ maps $S_{i,t}$ to $%
S_{i,f_{\tau \left( t\right) }}$, where $f_{\tau }:T_{i}\rightarrow T_{i}$
is a regressive homomorphism. We will furthermore assume that $\mathcal{F}%
_{\alpha _{i}}$ is finite for every $i\in k$.

Let $S_{\alpha _{i}}$ be the set of functions $x:T_{i}\rightarrow S$ such
that $x\left( t\right) \in S_{i,t}$ for every $t\in T_{i}$. Given a subset $%
A $ of $\omega $, a sequence $\left( x_{n}\right) _{n\in A}$ in $S_{\alpha
_{i}}$, and $t\in T_{i}$, we define the $\left( \alpha _{i},+_{i},t\right) $-%
\emph{semigroup} $\mathrm{S}_{\alpha _{i},+_{i},t}\left( x_{n}\right) _{n\in
A}$ generated by $\left( x_{n}\right) _{n\in A}$ to be the collection of
elements of $S_{i,t}$ of the form%
\begin{equation*}
\sum\nolimits_{d\in F}^{i}\tau _{d}\left( x_{d}\left( t_{d}\right) \right)
\end{equation*}%
for finite $F\subset A$, and families $\left( t_{d}\right) _{d\in A}$ in $%
T_{i}$, $\left( \tau _{d}\right) _{d\in A}$ in $\mathcal{F}_{\alpha _{i}}$
such that $\left\{ f_{\tau _{d}}\left( t_{d}\right) :d\in F\right\} $ is a
chain in $T_{i}$ with least element $t$. A sequence $\left( y_{n}\right) $
in $S_{\alpha _{i}}$ is an $\left( \alpha _{i},+_{i}\right) $-\emph{subsystem%
} of $\left( x_{n}\right) $ if $\mathrm{dom}\left( y_{n}\right) <\mathrm{dom}%
\left( y_{n+1}\right) $ and $y_{n}\left( t\right) \in \mathrm{\mathrm{S}}%
_{\alpha _{i},+_{i},t}\left( x_{\ell }\right) _{\ell \in \omega }$ for every 
$n\in \omega $ .

\begin{theorem}
\label{Theorem:polynomial-Gowers}Fix, for every $i\in k$, a sequence $\left(
x_{i,n}\right) _{n\in \omega }$ in $S_{\alpha _{i}}$. Let $A_{\left(
t_{s}\right) _{s\in m}}$ be a subset of $S$ whenever $t_{s}\in T_{\lambda
\left( s\right) }$ for $s\in m$. The following assertions are equivalent:

\begin{enumerate}
\item there exist order-preserving idempotent elements $\xi _{i}\in \left(
\beta _{\mathfrak{F}_{i}}S_{i}\right) _{\alpha _{i}}$ such that\emph{\ }$%
\mathrm{S}_{\alpha _{i},+_{i},t}\left( x_{i,n}\right) _{n\geq r}\in \xi
_{i}\left( t\right) $ for every $i\in k$, $t\in T_{i}$, $r\in \omega $, and $%
A_{\left( t_{s}\right) _{s\in m}}\in p\left( \xi _{\lambda \left( 0\right)
}\left( t_{0}\right) ,\ldots ,\xi _{\lambda \left( m-1\right) }\left(
t_{m-1}\right) \right) $ whenever $t_{s}\in T_{\lambda \left( s\right) }$
for $s\in m$;

\item for any finite coloring $c$ of $S$, for each $i\in k$, sequence $(\psi
_{i,n}^{(\mathfrak{F})})$ of functions $\psi _{i,n}^{(\mathfrak{F})}:\left(
S^{T_{i}}\right) ^{n}\rightarrow \mathfrak{F}_{i}$, there exists an $\left(
\alpha _{i},+_{i}\right) $-tetris subsystem $\left( y_{i,n}\right) $ of $%
\left( x_{i,n}\right) $ such that for every choice of $t_{i}\in T_{i}$ for $%
i\in k$ one has that

\begin{itemize}
\item $y_{i,n}\left( t\right) \in S_{i,t}\cap \psi _{i,n}^{(\mathfrak{F}%
)}\left( x_{i,0},\ldots ,x_{i,n-1}\right) $ for every $t\in T_{i}$;

\item $p\left( \sum\nolimits_{d\in F_{s}}^{\lambda \left( s\right) }\tau
_{\lambda (s),d}\left( y_{\lambda (s),d}\left( t_{\lambda (s),d}\right)
\right) \right) _{s\in m}\in A_{\left( t_{s}\right) _{s\in m}}$ whenever $%
F_{0}<F_{1}<\cdots <F_{m-1}$ are finite subsets of $\omega $, $\left(
t_{i,d}\right) _{d\in \omega }$ is a sequence in $T_{i}$ and $\left( \tau
_{i,d}\right) _{d\in \omega }$ is a sequence in $\mathcal{F}_{\alpha _{i}}$
for $i\in k$ such that $\{f_{\tau _{\lambda \left( s\right) ,d}}\left(
t_{\lambda \left( s\right) ,d}\right) :d\in F_{s}\}$ is a chain in $%
T_{\lambda \left( s\right) }$ with least element $t_{s}$;

\item the color of $\tau \left( y_{i,n}\left( t\right) \right) $ depends
only on $f_{\tau }\left( t\right) $ for every $i\in k$ and $n\in \omega $.
\end{itemize}

\item The same as (2) where moreover $\mathrm{\mathrm{S}}_{\alpha
_{i},+_{i},t}\left( y_{i,n}\right) _{n\in \omega }$ is monochromatic for
every $i\in k$, $t\in T_{i}$.
\end{enumerate}
\end{theorem}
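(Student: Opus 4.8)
The plan is to prove the cycle $(1)\Rightarrow(3)\Rightarrow(2)\Rightarrow(1)$, the implication $(3)\Rightarrow(2)$ being immediate since $(3)$ is $(2)$ with the extra assertion that each $\mathrm{S}_{\alpha_i,+_i,t}(y_{i,n})_{n\in\omega}$ is monochromatic. For $(1)\Rightarrow(3)$ I would re-run the recursive construction underlying Theorem~\ref{Theorem:product-action-filtered}, fed with the order-preserving idempotents $\xi_i$ from $(1)$ and with two extra bookkeeping demands. To encode the polynomial, note that by Proposition~\ref{Proposition:extended-polynomial} the hypothesis $A_{(t_s)_{s\in m}}\in p(\xi_{\lambda(0)}(t_0),\dots,\xi_{\lambda(m-1)}(t_{m-1}))$ is equivalent to $f_p^{-1}(A_{(t_s)_{s\in m}})\in \xi_{\lambda(0)}(t_0)\otimes\cdots\otimes\xi_{\lambda(m-1)}(t_{m-1})$, where $f_p\colon S^m\to S$ is the polynomial map, which is genuinely continuous on $\beta(S^m)$; hence I apply Theorem~\ref{Theorem:product-action-filtered} to the finite coloring
\[
\hat c(w_0,\dots,w_{m-1}):=\Bigl(c(w_0),\dots,c(w_{m-1}),\ \bigl(\mathbbm{1}_{f_p^{-1}(A_{(t_s)_{s\in m}})}(w_0,\dots,w_{m-1})\bigr)_{(t_s)_{s\in m}\in\prod_{s\in m}T_{\lambda(s)}}\Bigr),
\]
which is finite because every $T_i$ is finite. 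Second, at each stage of the recursion, when choosing the value $x_{i,n}(t)$---which the construction picks from a $\xi_i(t)$-large set---I also intersect with $\psi_{i,n}^{(\mathfrak{F})}(x_{i,0},\dots,x_{i,n-1})\in\mathfrak{F}_i\subseteq\xi_i(t)$ and with $\mathrm{S}_{\alpha_i,+_i,t}(x_{i,\ell})_{\ell>N_{i,n}}\in\xi_i(t)$ (the latter membership being exactly part of $(1)$), where $N_{i,n}$ is taken large enough that the resulting $y_{i,n}:=x_{i,n}$ form an $(\alpha_i,+_i)$-subsystem of the given sequence with increasing domains; a finite intersection of $\xi_i(t)$-large sets is again $\xi_i(t)$-large, hence nonempty. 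The conclusion of Theorem~\ref{Theorem:product-action-filtered} then gives that the $\hat c$-color of a configuration $\bigl(\sum_{d\in F_s}^{\lambda(s)}\tau_{\lambda(s),d}(y_{\lambda(s),d}(t_{\lambda(s),d}))\bigr)_{s\in m}$ equals the $\hat c$-color of $\xi_{\lambda(0)}(t_0)\otimes\cdots\otimes\xi_{\lambda(m-1)}(t_{m-1})$: the $(t_s)_{s\in m}$-entry of its second block is $1$ precisely because $f_p^{-1}(A_{(t_s)_{s\in m}})$ lies in that tensor product, which yields $p(\sum\cdots)_{s\in m}\in A_{(t_s)_{s\in m}}$; and the $s_0$-th entry of its first block (after padding the remaining coordinates with arbitrary admissible index blocks) yields $c\bigl(\sum_{d\in F_{s_0}}^{\lambda(s_0)}\tau_{\lambda(s_0),d}(y_{\lambda(s_0),d}(t_{\lambda(s_0),d}))\bigr)=c(\xi_{\lambda(s_0)}(t_{s_0}))$, which specialises both to ``the color of $\tau(y_{i,n}(t))$ depends only on $f_\tau(t)$'' and to the monochromaticity of $\mathrm{S}_{\alpha_i,+_i,t}(y_{i,n})_{n\in\omega}$ demanded in $(3)$; the filter condition $y_{i,n}(t)\in S_{i,t}\cap\psi_{i,n}^{(\mathfrak{F})}(\dots)$ holds by construction.

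For $(2)\Rightarrow(1)$ I would argue by compactness, in the spirit of the proof that every IP set lies in an idempotent ultrafilter and of \cite[Corollary 3.5]{bergelson_polynomial_2014}. First apply $(2)$ to produce, for each $i\in k$, an $(\alpha_i,+_i)$-subsystem $(y_{i,n})_{n\in\omega}$ of $(x_{i,n})_{n\in\omega}$ realising the $A$-conclusion. For each $i$, equip $\beta_{\mathfrak{F}_i}S_i$ with the action of $T_i$ obtained from $\alpha_i$ by replacing the subsemigroup attached to $t$ with the closure of $\mathrm{S}_{\alpha_i,+_i,t}(y_{i,n})_{n\in\omega}$ (a partial subsemigroup, by the elementary fact that in a rooted tree a union of two chains each with least element $t$ is a chain with least element $t$), keeping $\mathcal{F}_{\alpha_i}$ and the spines $f_\tau$; the fixed-point space of this action is nonempty because, by $(2)$ applied with arbitrary colorings and arbitrary $\psi^{(\mathfrak{F})}$-data, every $C\in\mathfrak{F}_i$ meets $\mathrm{S}_{\alpha_i,+_i,t}(y_{i,n})_{n\geq r}$ for all $r$ and the color-constancy can be realised along a single $T_i$-indexed family, so the combinatorial criterion of Theorem~\ref{Theorem:monochromatic-tree-filtered} applies. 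Proposition~\ref{Proposition:tree} then produces order-preserving idempotents $\xi_i$ with $\mathrm{S}_{\alpha_i,+_i,t}(y_{i,n})_{n\geq r}\in\xi_i(t)$ for all $t,r$, whence also $\mathrm{S}_{\alpha_i,+_i,t}(x_{i,n})_{n\geq r}\in\xi_i(t)$ since $(y_{i,n})$ is a subsystem of $(x_{i,n})$. Finally $A_{(t_s)_{s\in m}}\in p(\xi_{\lambda(0)}(t_0),\dots)=f_p(\xi_{\lambda(0)}(t_0)\otimes\cdots\otimes\xi_{\lambda(m-1)}(t_{m-1}))$ is checked by unwinding the defining ultrafilter quantifiers of the tensor product one coordinate at a time: given $w_0,\dots,w_{s-1}$ already lying in the respective generated semigroups, the tail $\mathrm{S}_{\alpha_{\lambda(s)},+_{\lambda(s)},t_s}(y_{\lambda(s),n})_{n>N}\in\xi_{\lambda(s)}(t_s)$, with $N$ larger than every index used so far, consists of admissible choices of $w_s$, since then $(w_0,\dots,w_{m-1})$ has the form appearing in the $A$-conclusion of $(2)$ (the induced index blocks form a staircase $F_0<\cdots<F_{m-1}$) and therefore $f_p(w_0,\dots,w_{m-1})\in A_{(t_s)_{s\in m}}$.

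The step I expect to be the main obstacle is $(2)\Rightarrow(1)$, and specifically making precise the nonemptiness of the fixed-point space of the ``generated-semigroup'' action: one must verify that these generated semigroups genuinely carry a Ramsey action of $T_i$ in the sense of Theorem~\ref{Theorem:monochromatic-tree-filtered} (that the induced traces of $\mathfrak{F}_i$ are proper filters, that the spines $f_\tau$ map the restricted subsemigroups into one another, and that the color-constancy output of $(2)$ can be threaded through a single $T_i$-indexed family rather than through an entire subsystem). A second point requiring care is that $p$, unlike $f_p$, is only separately right-continuous in its arguments, so the naive claim that $\{(\xi_i)_{i\in k}:A_{(t_s)_{s\in m}}\in p(\xi_{\lambda(0)}(t_0),\dots)\}$ is a closed subset of $\prod_i(\beta_{\mathfrak{F}_i}S_i)_{\alpha_i}$ is false; it is precisely the passage to $f_p$ via Proposition~\ref{Proposition:extended-polynomial} together with the staircase unwinding above that rescues the compactness argument. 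Everything else is a routine adaptation of the recursive constructions of Section~\ref{Section:action-partial} and the filtered-semigroup arguments of Section~\ref{Section:action-filtered}.
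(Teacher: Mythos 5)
Your proposal follows the same route as the paper's own proof: the cycle $(1)\Rightarrow(3)\Rightarrow(2)\Rightarrow(1)$, with $(1)\Rightarrow(3)$ obtained by rerunning the recursive ultrafilter construction of Theorems \ref{Theorem:monochromatic-action} and \ref{Theorem:product-action-filtered} against the given idempotents, and $(2)\Rightarrow(1)$ by passing to the filters generated by the sets $\mathrm{S}_{\alpha_i,+_i,t}\left( y_{i,n}\right)_{n\geq r}\cap C$, a compactness argument, and Proposition \ref{Proposition:tree}. The two points you elaborate beyond the paper's write-up---the auxiliary product coloring recording membership in $f_p^{-1}(A_{\left( t_s\right)_{s\in m}})$ via Proposition \ref{Proposition:extended-polynomial}, and the staircase unwinding of the tensor product in $(2)\Rightarrow(1)$---are precisely the details the paper leaves implicit, and your treatment of them is correct.
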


\begin{proof}
The proof of the implication (1)$\Rightarrow $(3) is the same as the proof
of Theorem \ref{Theorem:monochromatic-action}. The implication (3)$%
\Rightarrow $(2) is obvious. We consider the implication (2)$\Rightarrow $%
(1). Define for $i\in k$ and $t\in T_{i}$ the filter $\mathfrak{G}_{i,t}$ on 
$S$ consisting of the sets of the form $\mathrm{S}_{\alpha
_{i},+_{i},t}\left( y_{i,n}\right) _{n\geq r}\cap C$ for $r\in \omega $ and $%
C\in \mathfrak{F}$. Observe that such sets are nonempty by assumption.
Define then $\beta _{\mathfrak{G}_{i,t}}S$ to be the set of ultrafilters on $%
S$ that contain $\mathfrak{G}_{i,t}$. The Ramsey action $\alpha _{i}$ of $%
T_{i}$ on $S$ induces a Ramsey action $\alpha _{i}$ of $T_{i}$ on $\beta S$
given by the map $t\mapsto \beta _{\mathfrak{G}_{i,t}}S_{i,t}$ and the
collection of semigroup homomorphisms $\tau :\beta S\rightarrow \beta S$
given by the continuous extensions of elements $\tau $ of $\mathcal{F}%
_{\alpha _{i}}$. The assumption implies by compactness that the set of
functions $\xi _{i}:T_{i}\rightarrow \beta S$ such that $\xi _{i}\left(
t\right) \in \beta _{\mathfrak{G}_{i,t}}S_{i,t}$ and $\tau \circ \xi
_{i}=\xi _{i}\circ f_{\tau }$ for every $\tau \in \mathcal{F}_{\alpha _{i}}$%
. Hence by Proposition \ref{Proposition:tree} one can find such a function $%
\xi _{i}:T_{i}\rightarrow \beta S$ such that moreover $\xi _{i}\left(
t\right) $ is an idempotent element of $\beta _{\mathfrak{G}_{i,t}}S_{i,t}$
and $\xi _{i}\left( t\right) \leq \xi _{i}\left( t_{0}\right) $ whenever $%
t,t_{0}\in T$ are such that $t\leq t_{0}$. Finally observe that since $\xi
_{i}\left( t\right) \in \beta _{\mathfrak{G}_{i,t}}S_{i,t}$ we have that $%
\mathrm{TS}_{\alpha _{i},+_{i},t}\left( y_{i,n}\right) _{n\geq r}\in \xi
_{i}\left( t\right) $ for every $r\in \omega $. Since $\left( y_{i,n}\right) 
$ is an $\left( \alpha _{i},+_{i}\right) $-tetris subsystem of $\left(
x_{i,n}\right) $, this implies that $\mathrm{TS}_{\alpha _{i},+_{i},t}\left(
x_{i,n}\right) _{n\geq r}\in \xi _{i}\left( t\right) $. This concludes the
proof.
\end{proof}

The polynomial Milliken--Taylor theorem of Bergelson--Hindman--Williams \cite%
[Corollary 3.5]{bergelson_polynomial_2014} is the particular instance of
Theorem \ref{Theorem:polynomial-Gowers} where each tree $T_{i}$ has only one
node, the filter $\mathfrak{F}$ on $S$ is the trivial filter $\left\{
S\right\} $, each semigroup operation $+\in \mathfrak{S}$ on $S$ is
everywhere defined.

\section{Applications to combinatorial configurations in delta sets\label%
{Section:combinatorial}}

\subsection{Configurations in delta sets in amenable groups}

Suppose that $\left( G,\cdot \right) $ is a discrete \emph{amenable} group.
Recall that $G$ is endowed with a canonical notion of \emph{density}---also
known as Banach density---defined as follows. Let $A$ be a subset of $G$,
and $\alpha $ be a positive real number. Then the density $d(A)$ of $A$ is
larger than or equal to $\alpha $ if and only if for any finite subset $F$
of $G$ and any $\varepsilon >0$ there exists a finite subset $I$ of $G$ such
that $\left\vert A\cap I\right\vert \geq \left( \alpha -\varepsilon \right)
\left\vert I\right\vert $ and $\left\vert gI\bigtriangleup I\right\vert
<\varepsilon \left\vert I\right\vert $ for every $g\in F$. A subset $A$ of $%
G $ has positive density if $d(A)>0$. We recall for future use the following 
\emph{correspondence principle}, which is established in \cite[Lemma 4.6]%
{beiglbock_sumset_2010}.

\begin{lemma}
\label{Lemma:correspondence}Suppose that $A\subset G$ has positive density.
Then there exist a standard Borel probability space $\left( X,\mu \right) $,
a measure-preserving action $g\mapsto \tilde{g}\in \mathrm{Aut}\left( X,\mu
\right) $ of $G$ on $\left( X,\mu \right) $, and a Borel subset $\tilde{A}$
of $X$ such that $\mu (\tilde{A})=d(A)$, and for any finite subset $F$ of $G$%
, one has that%
\begin{equation*}
d\left( \bigcap_{g\in F}gA\right) \geq \mu \left( \bigcap_{g\in F}\tilde{g}%
\tilde{A}\right) \text{.}
\end{equation*}
\end{lemma}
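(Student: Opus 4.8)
The plan is to invoke a Furstenberg-type correspondence principle. Work in the compact metrizable space $X:=\{0,1\}^{G}$ (metrizable since one may assume $G$ countable; see the last remark below), equipped with the shift action of $G$ given by $(\tilde{g}\omega)(h):=\omega(hg)$, a continuous action by homeomorphisms. Let $x:=\mathbf{1}_{A}\in X$ be the indicator function of $A$, and let $\tilde{A}:=\{\omega\in X:\omega(e)=1\}$, a clopen subset of $X$. The measure $\mu$ will be produced as a weak$^{*}$ cluster point of empirical measures along a F\o lner sequence that nearly realizes the density of $A$, and both conclusions will then follow by evaluating this weak$^{*}$ limit on the clopen sets $\bigcap_{g\in F}\tilde{g}\tilde{A}$.

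First I would choose the F\o lner sequence. By the very definition of $d(A)$ as the supremum of those $\alpha$ satisfying the stated condition, fixing an increasing exhaustion $F_{n}\nearrow G$ by finite sets and reals $\varepsilon_{n}\searrow 0$ produces finite sets $I_{n}\subset G$ with $|A\cap I_{n}|\geq(d(A)-\varepsilon_{n})|I_{n}|$ and $|gI_{n}\triangle I_{n}|<\varepsilon_{n}|I_{n}|$ for all $g\in F_{n}$. Then I would set $\mu_{n}:=\frac{1}{|I_{n}|}\sum_{h\in I_{n}}\delta_{\tilde{h}x}$ and, by weak$^{*}$ compactness of the space of Borel probability measures on the compact space $X$, pass to a subsequence along which $\mu_{n}\to\mu$. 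The asymptotic invariance $|gI_{n}\triangle I_{n}|/|I_{n}|\to 0$ gives $\|\tilde{g}_{*}\mu_{n}-\mu_{n}\|\to 0$ in total variation for every $g\in G$, hence $\tilde{g}_{*}\mu=\mu$; thus $g\mapsto\tilde{g}$ is a measure-preserving action of $G$ on the standard Borel probability space $(X,\mu)$, and $\tilde{A}$ is the required Borel subset.

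It then remains to compute the relevant measures. Since $\tilde{A}$ is clopen, $\mu(\tilde{A})=\lim_{n}\mu_{n}(\tilde{A})=\lim_{n}|A\cap I_{n}|/|I_{n}|$; this limit is $\geq d(A)$ by the choice of the $I_{n}$, and it is $\leq d(A)$ because averages of $\mathbf{1}_{A}$ along a F\o lner sequence never exceed the Banach density of $A$, so $\mu(\tilde{A})=d(A)$. For a finite $F\subset G$, unwinding the shift convention gives $\bigcap_{g\in F}\tilde{g}\tilde{A}=\{\omega:\omega(g^{-1})=1 \text{ for all }g\in F\}$, and $\tilde{h}x$ lies in this clopen set exactly when $g^{-1}h\in A$ for every $g\in F$, i.e.\ when $h\in\bigcap_{g\in F}gA$. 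Hence $\mu\bigl(\bigcap_{g\in F}\tilde{g}\tilde{A}\bigr)=\lim_{n}\bigl|I_{n}\cap\bigcap_{g\in F}gA\bigr|/|I_{n}|\leq d\bigl(\bigcap_{g\in F}gA\bigr)$, again because a F\o lner average is bounded by the Banach density; this is exactly the asserted inequality.

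The step requiring the most care — though it is routine — is the correct use of the F\o lner property at three places (invariance of $\mu$, the bound $\mu(\tilde{A})\geq d(A)$, and "F\o lner average $\leq$ Banach density"), together with the left/right translate and inverse bookkeeping that is needed so that $\bigcap_{g\in F}\tilde{g}\tilde{A}$ really corresponds to $\bigcap_{g\in F}gA$ rather than to some twist of it. Finally, if $G$ is uncountable then $\{0,1\}^{G}$ is not metrizable; but only countably much data enters the argument — the set $A$, a countable exhaustion of $G$, and the countably many $I_{n}$ — so one may first pass to the countable subgroup of $G$ generated by them, which suffices since the inequality is tested only against finite $F$.
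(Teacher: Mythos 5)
Your argument is correct. Note that the paper does not prove this lemma at all: it is quoted verbatim from Beiglb\"ock--Bergelson--Fish \cite[Lemma 4.6]{beiglbock_sumset_2010}, so there is no internal proof to compare against; what you have written is essentially the standard Furstenberg correspondence argument that underlies that reference. I checked the points where such proofs usually go wrong and they are all in order: with the convention $(\tilde{g}\omega)(h)=\omega(hg)$ the map $g\mapsto\tilde{g}$ is a genuine left action, $\tilde{g}_{*}\mu_{n}$ is the empirical measure over the left translate $gI_{n}$, which is exactly what the paper's definition of $d$ (via $|gI\bigtriangleup I|<\varepsilon|I|$) controls, and the computation $\tilde{h}x\in\bigcap_{g\in F}\tilde{g}\tilde{A}\Leftrightarrow h\in\bigcap_{g\in F}gA$ is right. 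The two density comparisons (the chosen $I_{n}$ give $\mu(\tilde{A})\geq d(A)$, and any convergent F\o lner average of an indicator is $\leq$ the Banach density as defined in the paper) are also correct, and clopenness of the cylinder sets justifies passing averages to the weak$^{*}$ limit. The only soft spot is your final remark on uncountable $G$: passing to the countable subgroup generated by the data produces a measure-preserving action of that subgroup only, not of $G$, so it does not literally yield the lemma as stated (which quantifies over all finite $F\subset G$ and requires $\tilde{g}$ for every $g\in G$). This is immaterial here, since the cited source and all applications in the paper concern countable (or countably generated) data, but if you want the statement verbatim for uncountable $G$ you would need a further step, e.g.\ working with the (nonmetrizable) compact space $\{0,1\}^{G}$ and then restricting to the separable $G$-invariant sub-$\sigma$-algebra generated by the orbit of $\tilde{A}$ to recover a standard Borel model.
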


The following result about sets of positive density was proved by
Furstenberg as an application of the corresponding principle and Hindman's
theorem on finite unions \cite{furstenberg_recurrence_1981}. Recall that the
delta set $AA^{-1}$ associated with a subset $A$ of $G$ is the set $\left\{
gh^{-1}:g,h\in A\right\} \subset G$.

\begin{theorem}[Furstenberg]
\label{Theorem:Furstenberg}Suppose that $A\subset G$ has positive density,
and $\left( g_{n}\right) $ is a sequence of elements of $G$. Then there
exists a sequence $\left( F_{n}\right) $ of finite nonempty subsets of $%
\omega $ such that $F_{n}<F_{n+1}$ for every $n\in \omega $ and such that,
for every finite subset $E$ of $\omega $ one has that%
\begin{equation*}
\prod_{d\in E}\prod_{i\in F_{d}}g_{i}\in AA^{-1}\text{.}
\end{equation*}
\end{theorem}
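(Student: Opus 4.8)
The plan is to deduce the theorem from the correspondence principle (Lemma \ref{Lemma:correspondence}) together with the Galvin--Glazer--Hindman theorem, which is available as the special case of Theorem \ref{Theorem:monochromatic-action} in which the ordered set is a single point. First I would apply Lemma \ref{Lemma:correspondence} to the positive-density set $A$, obtaining a standard Borel probability space $(X,\mu)$, a measure-preserving action $g\mapsto\tilde g\in\mathrm{Aut}(X,\mu)$, and a Borel set $\tilde A\subset X$ with $\mu(\tilde A)=d(A)>0$ and $d(\bigcap_{g\in F}gA)\ge\mu(\bigcap_{g\in F}\tilde g\tilde A)$ for every finite $F\subset G$. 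This reduces the theorem to a recurrence statement: if $g\in G$ satisfies $\mu(\tilde g\tilde A\cap\tilde A)>0$, then applying the measure-preserving map $\tilde g^{-1}$ and Lemma \ref{Lemma:correspondence} to $F=\{1_G,g^{-1}\}$ gives $d(A\cap g^{-1}A)\ge\mu(\tilde A\cap\tilde g^{-1}\tilde A)=\mu(\tilde g\tilde A\cap\tilde A)>0$, so $A\cap g^{-1}A\ne\varnothing$, and any $b$ in this intersection yields $g=(gb)b^{-1}\in AA^{-1}$. Hence it suffices to produce $(F_n)$ with $F_n<F_{n+1}$ such that $\mu(\tilde g\tilde A\cap\tilde A)>0$ for $g=\prod_{d\in E}\prod_{i\in F_d}g_i$ and every finite $E\subset\omega$.

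Next, let $S$ be the adequate partial semigroup with underlying set $[\omega]^{<\aleph_0}\setminus\{\varnothing\}$ and operation $F_0+F_1:=F_0\cup F_1$, defined exactly when $\max F_0<\min F_1$; then $\Phi\colon S\to G$, $\Phi(F):=\prod_{i\in F}g_i$ (product in increasing order), is a partial semigroup homomorphism (and is adequate, since $G$ is a total semigroup), so it extends to a continuous semigroup homomorphism $\Phi\colon\gamma S\to\beta G$. I claim that $R:=\{g\in G:\mu(\tilde g\tilde A\cap\tilde A)>0\}$ belongs to every idempotent of $\beta G$. To see this, fix an idempotent $q\in\beta G$ and let $U_g$ be the unitary $f\mapsto f\circ\tilde g^{-1}$ of the separable Hilbert space $L^2(X,\mu)$, so that $U_g\mathbf 1_{\tilde A}=\mathbf 1_{\tilde g\tilde A}$ and $g\mapsto U_g$ is a unitary representation of $G$; let $P$ be the limit of the net $(U_g)_{g\in G}$ along $q$ in the weak operator topology, which exists by weak compactness of the unit ball of $B(L^2(X,\mu))$. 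Since $q$ is idempotent, a routine ultrafilter computation gives $\langle P^2f,h\rangle=\lim_{g_1\to q}\lim_{g_2\to q}\langle U_{g_1g_2}f,h\rangle=\langle Pf,h\rangle$, so $P^2=P$; as $\|P\|\le 1$, the idempotent $P$ is an orthogonal projection. Pairing with the constant function $1$ gives $\int P\mathbf 1_{\tilde A}\,d\mu=\lim_{g\to q}\mu(\tilde g\tilde A)=\mu(\tilde A)>0$, so $P\mathbf 1_{\tilde A}\ne 0$; therefore $\lim_{g\to q}\mu(\tilde g\tilde A\cap\tilde A)=\langle P\mathbf 1_{\tilde A},\mathbf 1_{\tilde A}\rangle=\|P\mathbf 1_{\tilde A}\|^2>0$, which forces $R\in q$.

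Finally, by Ellis's theorem $\gamma S$ contains an idempotent $p$; then $q:=\Phi(p)$ is an idempotent of $\beta G$, so $R\in q$ by the claim, i.e. $\Phi^{-1}(R)\in p$. Colouring $S$ by the characteristic function of $\Phi^{-1}(R)$ and applying Theorem \ref{Theorem:monochromatic-action} with $\mathbb P$ a one-point ordered set, $\mathcal F_\alpha=\{\mathrm{id}_S\}$, $S_{*}=S$, the order-preserving idempotent being $p$ (whose colour equals that of the set $\Phi^{-1}(R)$ it contains, namely the favourable one), and $\psi^{(S)}_n(F_0,\dots,F_{n-1})=\{F_0,\dots,F_{n-1}\}$: the first conclusion forces $\min F_n>\max F_i$ for $i<n$, hence $F_n<F_{n+1}$, and the monochromaticity conclusion gives $F_{d_0}+\dots+F_{d_\ell}\in\Phi^{-1}(R)$ whenever $d_0<\dots<d_\ell$. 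Thus for every finite $E=\{d_0<\dots<d_\ell\}\subset\omega$ we obtain $\prod_{d\in E}\prod_{i\in F_d}g_i=\Phi(F_{d_0}+\dots+F_{d_\ell})\in R$, which by the first step lies in $AA^{-1}$. The main obstacle is the claim of the second paragraph---passing from positive density to simultaneous recurrence of $\tilde A$ along an entire $\mathrm{FS}$-set of the elements $\Phi(F_d)$---which is precisely where the algebra of idempotent ultrafilters (Hindman's theorem, here via the projection $P$) is genuinely needed; the remaining steps are bookkeeping.
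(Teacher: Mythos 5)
Your proof is correct and follows essentially the same route the paper takes, namely deriving Theorem \ref{Theorem:Furstenberg} as the two-node-tree instance of Theorem \ref{Theorem:Gowers-Furstenberg}: the correspondence principle of Lemma \ref{Lemma:correspondence}, then the fact that the weak-operator limit of the associated unitaries along an idempotent ultrafilter is an orthogonal projection with $\left\langle P\mathbf{1}_{\tilde{A}},\mathbf{1}_{\tilde{A}}\right\rangle >0$ (this is exactly the paper's Lemma \ref{Lemma:idempotent-measure}), and finally the Hindman-type Theorem \ref{Theorem:monochromatic-action}. The only cosmetic differences are that you push the idempotent forward to $\beta G$ rather than letting $\gamma S$ act on $\left( X,\mu \right) $ directly, and that you work with the one-point poset instead of the two-node tree; both are reorganizations of the same argument.
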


In the statement of Theorem \ref{Theorem:Furstenberg} and in the following,
we convene that, for a given finite nonempty subet $F$ of $\omega $ and
family $\left( g_{i}\right) _{i\in F}$ in $G$, the product $\prod_{i\in
F}g_{i}$ denotes the element $g_{i_{0}}\cdots g_{i_{k-1}}$ of $G$, where $%
\left( i_{s}\right) _{s\in k}$ is the increasing enumeration of $F$. If $F$
is empty, then $\prod_{i\in F}g_{i}$ denotes the identity of $G$.

Fix now a finite rooted tree $T$. Let $T^{+}$ be the set of nodes in $T$
different from the root. Recall that $\mathrm{FIN}^{T}$ denotes the adequate
partial semigroup of functions $b:\mathrm{dom}\left( b\right) \rightarrow
T^{+}$, where $\mathrm{dom}\left( b\right) $ is a (possibly empty) finite
subset of $\omega $. If $f:T\rightarrow T$ is a regressive
homomorphism---see Definition \ref{Definition:regrssive}---then $\tau _{f}:%
\mathrm{FIN}^{T}\rightarrow \mathrm{FIN}^{T}$ is the corresponding adequate
partial semigroup homomorphism defined by $\tau _{f}\left( b\right) :\left\{
n\in \mathrm{dom}\left( b\right) :f\left( b\left( n\right) \right) \in
T^{+}\right\} \rightarrow T^{+}$, $n\mapsto f\left( b\left( n\right) \right) 
$. For $t\in T^{+}$ we let $\mathrm{FIN}_{t}^{T}$ be the set of $b\in 
\mathrm{FIN}^{T}$ such that the range of $b$ is a nonempty chain in $T$ with
least element $t$. If $r$ is the root of $T$, then $\mathrm{FIN}_{r}^{T}$
contains only the empty function.

\begin{theorem}
\label{Theorem:Gowers-Furstenberg}Suppose that $A\subset G$ has positive
density. Let $\left( g_{n}\right) $ be a sequence of functions $%
g:T^{+}\rightarrow G$. There exists a sequence of functions $%
b_{n}:T\rightarrow \mathrm{FIN}^{T}$, $t\mapsto b_{n,t}\in \mathrm{FIN}%
_{t}^{T}$ such that $\mathrm{\mathrm{dom}}\left( b_{n,t}\right) <\mathrm{dom}%
\left( b_{n+1,t}\right) $ and for any finite nonempty subset $F$ of $\omega $%
, regressive homomorphisms $f_{d}:T\rightarrow T$ and nodes $t_{d}\in T$ for 
$d\in F$ such that $\left\{ f_{d}\left( t_{d}\right) :d\in F\right\} $ is a
chain in $T$, one has that%
\begin{equation*}
\prod_{d\in F}\prod_{\substack{ j\in \mathrm{dom}(b_{d,t_{d}})  \\ %
(f_{d}\circ b_{d,t_{d}})\left( j\right) \in T^{+}}}\left( g_{j}\circ
f_{d}\circ b_{d,t_{d}}\right) \left( j\right) \in AA^{-1}\text{.}
\end{equation*}
\end{theorem}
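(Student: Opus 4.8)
The plan is to deduce the statement from the Gowers--Ramsey theorem for the tree action on $\mathrm{FIN}^{T}$ together with the Furstenberg correspondence principle and IP-recurrence. The key preliminary observation is that the displayed product equals $\phi$ evaluated on a single element of $\mathrm{FIN}^{T}$, where $\phi\colon\mathrm{FIN}^{T}\to G$ is defined by $\phi(b):=\prod_{j\in\mathrm{dom}(b)}g_{j}(b(j))$, the product taken in increasing order of the indices $j$. Indeed, for a regressive homomorphism $f$ of $T$ one has $\phi(\tau_{f}(b))=\prod g_{j}(f(b(j)))$, the product running over those $j\in\mathrm{dom}(b)$ with $f(b(j))\in T^{+}$, and $\phi$ is multiplicative on a sum $b_{0}+b_{1}$ whenever $\mathrm{dom}(b_{0})<\mathrm{dom}(b_{1})$. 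Hence, if the pieces $b_{n,t}$ are produced with $\mathrm{dom}(b_{n,t})$ larger than every domain occurring among the earlier pieces, then for $F=\{d_{0}<\dots<d_{\ell}\}$ the sum $\sum_{d\in F}\tau_{f_{d}}(b_{d,t_{d}})$ has increasing summand-domains and $\phi\bigl(\sum_{d\in F}\tau_{f_{d}}(b_{d,t_{d}})\bigr)$ is exactly the displayed product.

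Next I would apply Lemma \ref{Lemma:correspondence} to $A$: there is a measure-preserving system $(X,\mu)$ with $G$-action $g\mapsto\tilde g$ and a Borel set $\tilde A$ with $\mu(\tilde A)=d(A)>0$ and $d(A\cap hA)\geq\mu(\tilde A\cap\tilde h\tilde A)$ for all $h\in G$; in particular, $\mu(\tilde A\cap\tilde h\tilde A)>0$ forces $A\cap hA\neq\varnothing$, hence $h\in AA^{-1}$. So it suffices to arrange all the displayed products to lie in $R:=\{h\in G:\mu(\tilde A\cap\tilde h\tilde A)>0\}$, and by the IP-recurrence theorem of Furstenberg \cite{furstenberg_recurrence_1981} the set $R$ is an $\mathrm{IP}^{\ast}$-set, that is, $R$ belongs to every idempotent ultrafilter of $\beta G$.

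The combinatorial input is the Gowers tree action $\alpha$ of $T$ on the adequate partial semigroup $\mathrm{FIN}^{T}$ from Subsection \ref{Subsection:G}, which is a Ramsey action; I would fix an order-preserving idempotent $\xi\in(\gamma\mathrm{FIN}^{T})_{\alpha}$ by Proposition \ref{Proposition:tree}. Colouring $\mathrm{FIN}^{T}$ by $c(b)=0$ if $\phi(b)\in R$ and $c(b)=1$ otherwise, I would apply Theorem \ref{Theorem:monochromatic-action} (with $\mathbb P=T$) to $\xi$ and $c$, taking $\psi_{n}^{(\mathcal F)}$ to be the finite set of all homomorphisms $\tau_{f}$ with $f$ a regressive homomorphism of $T$, and choosing $\psi_{n}^{(S)}$ so that $\varphi_{S}\circ\psi_{n}^{(S)}$ forces each $x_{n}(t)$ to have domain above all domains occurring among $x_{0},\dots,x_{n-1}$ (this is compatible with the requirement that $\psi_{n}^{(S)}$ also contain the relevant ranges). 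Setting $b_{n,t}:=x_{n}(t)$ then gives $b_{n,t}\in\mathrm{FIN}_{t}^{T}$ with $\mathrm{dom}(b_{n,t})<\mathrm{dom}(b_{n+1,t})$, and for every sum $\sum_{d\in F}\tau_{f_{d}}(b_{d,t_{d}})$ whose spine images $\{f_{d}(t_{d}):d\in F\}$ form a chain with least element $t$, the colour of that sum equals the colour of $\xi(t)$.

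It then remains to check that $c(\xi(t))=0$, equivalently $\phi^{-1}(R)\in\xi(t)$, for every $t\in T$, and this is the one step that needs care, because $\phi$ is not a homomorphism of all of $\mathrm{FIN}^{T}$. Each $\xi(t)$ is a cofinal ultrafilter on $\mathrm{FIN}_{t}^{T}$, so $\{b:\mathrm{dom}(b)\subseteq(k,\infty)\}\in\xi(t)$ for every $k$; together with the idempotency $\xi(t)=\xi(t)+\xi(t)$ this shows that on a large set of pairs $(b_{0},b_{1})$ one has $\mathrm{dom}(b_{0})<\mathrm{dom}(b_{1})$, hence $\phi(b_{0}+b_{1})=\phi(b_{0})\phi(b_{1})$. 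Consequently the canonical extension $\bar\phi\colon\beta\mathrm{FIN}^{T}\to\beta G$ sends the idempotent $\xi(t)$ to an idempotent $\bar\phi(\xi(t))$ of $\beta G$, so $R\in\bar\phi(\xi(t))$ and $\phi^{-1}(R)\in\xi(t)$. Feeding this back, every displayed product lies in $R\subseteq AA^{-1}$, as required. The main obstacle is precisely this last verification---reconciling the non-homomorphic map $\phi$ with the idempotent-ultrafilter machinery by exploiting cofinality of the $\xi(t)$---everything else being assembly of the cited results and a suitable choice of auxiliary functions.
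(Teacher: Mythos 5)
Your proposal is correct and follows essentially the same route as the paper: Furstenberg correspondence, recurrence along an idempotent, and Theorem \ref{Theorem:monochromatic-action} applied to an order-preserving idempotent $\xi$ for the Gowers action on $\mathrm{FIN}^{T}$, with the displayed product rewritten as $\phi$ of a single sum $\sum_{d\in F}\tau_{f_{d}}(b_{d,t_{d}})$. The only (minor) divergence is that you push $\xi(t)$ forward to an idempotent of $\beta G$ and invoke $\mathrm{IP}^{\ast}$-recurrence there, whereas the paper applies its Lemma \ref{Lemma:idempotent-measure} directly to $\xi(t)\in\gamma\,\mathrm{FIN}^{T}$ via $b\mapsto\tilde g_{b}$; your explicit verification that $\bar\phi(\xi(t))$ is idempotent, using cofinality to reduce to increasing-domain pairs, is in fact a more careful treatment of a point the paper passes over when it asserts that $b\mapsto g_{b}$ is a partial semigroup homomorphism.
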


Theorem \ref{Theorem:Furstenberg} is the particular instance of \ref%
{Theorem:Gowers-Furstenberg} where $T$ is the rooted tree with only two
nodes.

Before proving Theorem \ref{Theorem:Gowers-Furstenberg} we recall some
notation about ultrafilter limits. Let $\mathcal{U}$ be an ultrafilter on a
set $S$. Fix a topological space space $X$, a function $f:S\rightarrow X$,
and $x\in X$. Then we say that $x$ is the $\mathcal{U}$-limit of $f$, in
formulas $\mathcal{U}-\lim_{s}f\left( s\right) =x$, if for every
neighborhood $W$ of $x$ in $X$, $\mathcal{U}s$, $f\left( s\right) \in W$. It
follows from the properties of ultrafilters that, if $X$ is a compact
Hausdorff space, then any function $f:A\rightarrow X$ admits a unique $%
\mathcal{U}$-limit, which we also denote by $f\left( \mathcal{U}\right) $.

Suppose that $H$ is a Hilbert space. Then the set $\mathrm{Ball}(B(H))$ of
bounded linear operators on $H$ of norm at most $1$ is a compact semigroup
with respect to the composition operation and the weak operator topology 
\cite[Section I.3.1]{blackadar_operator_2006}. The group $U(H)$ of unitary
operators on $H$ is a subsemigroup of $\mathrm{Ball}\left( B(H)\right) $.
Suppose now that $S$ is a partial semigroup, and $v:S\rightarrow \mathrm{Ball%
}\left( B(H)\right) $, $s\mapsto v_{s}$ is a partial semigroup homomorphism.
If $\mathcal{U}\in \gamma S$ is idempotent, then $v_{\mathcal{U}}=\mathcal{U}%
-\lim_{s}v_{s}$ is an orthogonal projection. Indeed, since $\mathcal{U}$ is
idempotent and $v:\gamma S\rightarrow \mathrm{Ball}\left( B(H)\right) $ is a
semigroup homomorphism, $v_{\mathcal{U}}$ is idempotent. Since $\left\Vert
v_{\mathcal{U}}\right\Vert \leq 1$, $v_{\mathcal{U}}$ must be an orthogonal
projection. As in the proof of \cite[Corollary 2.1]{bergelson_ip-sets_1996}
one can deduce from this observation the following.

\begin{lemma}
\label{Lemma:idempotent-measure}Let $\left( X,\mu \right) $ be a probability
space, and $\tilde{A}$ be a measurable subset of $X$. Let $\mathrm{Aut}%
\left( X,\mu \right) $ be the group of invertible measure-preserving
transformations of $\left( X,\mu \right) $. Suppose that $S$ is an adequate
partial semigroup, $S\rightarrow \mathrm{\mathrm{Au}t}\left( X,\mu \right) $%
, $s\mapsto \alpha _{s}$ is a partial semigroup homomorphism, and $\mathcal{U%
}\in \gamma S$ is idempotent. Then%
\begin{equation*}
\mathcal{U}-\lim_{s}\mu \left( \tilde{A}\cap \alpha _{s}(\tilde{A})\right)
\geq \mu (\tilde{A})^{2}\text{.}
\end{equation*}
\end{lemma}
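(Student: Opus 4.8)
The plan is to transfer the problem to the Koopman unitary representation of the action $\alpha$ on $L^{2}(X,\mu)$ and apply the fact recalled in the paragraph preceding the statement, namely that the $\mathcal{U}$-limit at an idempotent ultrafilter of a partial semigroup homomorphism into $\mathrm{Ball}(B(H))$ is an orthogonal projection.

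First I would set $H:=L^{2}(X,\mu)$ and $f:=\mathbbm{1}_{\tilde{A}}\in H$, and for $s\in S$ define $v_{s}\in U(H)$ by $v_{s}g:=g\circ\alpha_{s}^{-1}$. Since each $\alpha_{s}$ is an invertible measure-preserving transformation and $s\mapsto\alpha_{s}$ is a partial semigroup homomorphism into $\mathrm{Aut}(X,\mu)$, the map $s\mapsto v_{s}$ is a partial semigroup homomorphism into $\mathrm{Ball}(B(H))$ with respect to composition: one checks $v_{s+t}=v_{s}\circ v_{t}$ whenever $s+t$ is defined. Hence, by the observation recalled above, $P:=v_{\mathcal{U}}=\mathcal{U}-\lim_{s}v_{s}$ (the limit taken in the weak operator topology) is an orthogonal projection, so $P=P^{\ast}=P^{2}$.

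Next I would rewrite the displayed quantity in terms of the inner product of $H$. For every $s\in S$ one has $\mathbbm{1}_{\alpha_{s}(\tilde{A})}=v_{s}f$, so that $\mu(\tilde{A}\cap\alpha_{s}(\tilde{A}))=\langle f,v_{s}f\rangle$; taking $\mathcal{U}$-limits and using that $P$ is a projection gives
\[
\mathcal{U}-\lim_{s}\mu\bigl(\tilde{A}\cap\alpha_{s}(\tilde{A})\bigr)=\langle f,Pf\rangle=\|Pf\|^{2}.
\]
On the other hand, since each $\alpha_{s}$ is measure-preserving, the constant function $\mathbbm{1}\in H$ satisfies $v_{s}\mathbbm{1}=\mathbbm{1}$ for all $s$, hence $P\mathbbm{1}=\mathbbm{1}$; therefore, using $P=P^{\ast}$, the Cauchy--Schwarz inequality, and $\|\mathbbm{1}\|=1$ (as $\mu$ is a probability measure),
\[
\mu(\tilde{A})=\langle f,\mathbbm{1}\rangle=\langle f,P\mathbbm{1}\rangle=\langle Pf,\mathbbm{1}\rangle\le\|Pf\|,
\]
and squaring yields $\mu(\tilde{A})^{2}\le\|Pf\|^{2}$, which is the claim. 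I do not expect a genuine obstacle here: the only delicate points are bookkeeping, namely verifying that $s\mapsto v_{s}$ is a homomorphism (and not an anti-homomorphism) for composition---which is what forces the choice $v_{s}g=g\circ\alpha_{s}^{-1}$---and that weak operator convergence $v_{s}\to P$ legitimately passes to the inner products above and to the identity $P\mathbbm{1}=\mathbbm{1}$; both follow from the compactness of $\mathrm{Ball}(B(H))$ in the weak operator topology together with the exact identity $v_{s}\mathbbm{1}=\mathbbm{1}$.
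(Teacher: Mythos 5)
Your proposal is correct and follows essentially the same route as the paper: pass to the Koopman representation $v_{s}g=g\circ\alpha_{s}^{-1}$ on $L^{2}(X,\mu)$, use that $v_{\mathcal{U}}$ is an orthogonal projection fixing the constant function, and conclude by Cauchy--Schwarz against that constant vector. The only cosmetic difference is that you obtain $P\mathbbm{1}=\mathbbm{1}$ directly from weak convergence rather than via $\langle v_{\mathcal{U}}\xi_{0},\xi_{0}\rangle=\Vert\xi_{0}\Vert^{2}$, which changes nothing of substance.
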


\begin{proof}
Let $H:=L^{2}\left( X,\mu \right) $. For every $s\in S$ define $v_{s}\in
U(H) $ by $v_{s}f=f\circ \alpha _{s}^{-1}$ for $f\in H$. This defines a
partial semigroup homomorphism $S\rightarrow \mathrm{Ball}\left( B(H)\right) 
$, $s\mapsto v_{s}$. As observed above, $v_{\mathcal{U}}$ is an orthogonal
projection. Let $\xi $ be the vector of $H$ corresponding to the
characteristic function of $\tilde{A}$, and $\xi _{0}$ be the vector of $H$
corresponding to the function constantly equal to $1$. Observe that $%
v_{s}\xi _{0}=\xi _{0}$ for every $s\in S$. Therefore $\left\langle v_{%
\mathcal{U}}\xi _{0},\xi _{0}\right\rangle =\left\Vert \xi _{0}\right\Vert
^{2}$ and hence $v_{\mathcal{U}}\xi _{0}=\xi _{0}$. Then we have that%
\begin{equation*}
\mathcal{U}-\lim_{s}\mu \left( \tilde{A}\cap \alpha _{s}(\tilde{A})\right) =%
\mathcal{U}-\lim_{s}\left\langle \xi ,v_{s}\xi \right\rangle =\left\langle
\xi ,v_{\mathcal{U}}\xi \right\rangle =\left\Vert v_{\mathcal{U}}\xi
\right\Vert ^{2}\geq \left\langle \nu _{\mathcal{U}}\xi ,\xi
_{0}\right\rangle ^{2}=\left\langle \xi ,\xi _{0}\right\rangle ^{2}=\mu
(A)^{2}\text{.}
\end{equation*}
\end{proof}

We will now prove Theorem \ref{Theorem:Gowers-Furstenberg} using Lemma \ref%
{Lemma:idempotent-measure}.

\begin{proof}[Proof of Theorem \protect\ref{Theorem:Gowers-Furstenberg}]
Recall that the collection of adequate partial semigroup homomorphisms $\tau
_{f}:\mathrm{FIN}^{T}\rightarrow \mathrm{FIN}^{T}$ when $f$ ranges among the
regressive homomorphisms of $T$, and the collection of adequate partial
subsemigroups $\mathrm{FIN}_{t}^{T}$ for $t\in T$, define a Ramsey action of 
$T$ on $\mathrm{FIN}^{T}$; see Definition \ref{Definition:Ramsey-action}.
Therefore by Theorem \ref{Theorem:monochromatic-tree-action} and Proposition %
\ref{Proposition:tree} there exists a function $\xi :T\rightarrow \gamma 
\mathrm{FIN}^{T}$ such that $\xi \left( t\right) \in \gamma \mathrm{FIN}%
_{t}^{T}$ is idempotent, $\xi \left( t\right) \leq \xi \left( t_{0}\right) $
for every $t,t_{0}\in T$ with $t\leq t_{0}$, and $\tau _{f}\circ \xi =\xi
\circ f$ for any regressive homomorphism $f:T\rightarrow T$.

Let $\left( X,\mu \right) $ and $\tilde{A}\subset X$ be, respectively, the
standard probability space and the Borel set obtained from $A$ applying
Lemma \ref{Lemma:correspondence}. For $b\in \mathrm{FIN}^{T}$ define%
\begin{equation*}
g_{b}:=\prod_{i\in \mathrm{dom}\left( b\right) }g_{i}\left( b\left( i\right)
\right) \in G\text{.}
\end{equation*}%
This gives a partial semigroup homomorphism $\mathrm{FIN}^{T}\rightarrow G$, 
$b\mapsto g_{b}$. In turns, this gives a partial semigroup homomorphism $%
\mathrm{FIN}^{T}\rightarrow \mathrm{\mathrm{\mathrm{Aut}}}\left( X,\mu
\right) $, $b\mapsto \tilde{g}_{b}$. Therefore by Lemma \ref%
{Lemma:idempotent-measure} we have that, for every $t\in T$, 
\begin{equation*}
d(A)^{2}=\mu (\tilde{A})^{2}\leq \xi \left( t\right) -\lim_{b\in \mathrm{FIN}%
_{t}^{T}}\mu \left( \tilde{A}\cap \tilde{g}_{b}\tilde{A}\right) \leq \xi
\left( t\right) -\lim_{b\in \mathrm{FIN}_{t}^{T}}d\left( A\cap g_{b}A\right) 
\text{.}
\end{equation*}%
Observe that, whenever $d\left( A\cap gA\right) >0$ one has that $A\cap
gA\neq \varnothing $ and hence $g\in AA^{-1}$. Since $d(A)>0$, we deduce in
particular that, for every $t\in T$, $\xi \left( t\right) b$, $g_{b}\in
AA^{-1}$. Observe now that the element 
\begin{equation*}
\prod_{d\in F}\prod_{\substack{ j\in \mathrm{dom}(b_{d,t_{d}})  \\ %
(f_{d}\circ b_{d,t_{d}})\left( j\right) \in T^{+}}}\left( g_{j}\circ
f_{d}\circ b_{d,t_{d}}\right) \left( j\right)
\end{equation*}%
of $G$ in the statement can be written, in the notation above, as 
\begin{equation*}
g_{\sum_{d\in F}\tau _{f_{d}}(b_{d,t_{d}})}\text{.}
\end{equation*}%
Therefore the desired conclusion follows from Theorem \ref%
{Theorem:monochromatic-action}.
\end{proof}

The same argument as the one in the proof of Theorem \ref%
{Theorem:Gowers-Furstenberg} gives the following generalization. Suppose as
before that $\left( g_{n}\right) $ is a sequence of functions $%
g_{n}:T^{+}\rightarrow G$. Let also $L$ be a subset of $G$. Adopting the
notation from Subsection \ref{Subsection:common}, we let $\mathrm{FIN}%
_{L}^{T}$ be the set of finitely-supported functions $b:\mathrm{dom}\left(
b\right) \rightarrow L\cup T^{+}$, where $\mathrm{dom}\left( b\right) $ is a
(possibly empty) finite subset of $\omega $. For $t\in T^{+}$ we let $%
\mathrm{FIN}_{L,t}^{T}$ be the set of $b\in \mathrm{FIN}_{L}^{T}$ such that
the intersection of the range of $b$ with $T$ is a nonempty chain with least
element $t$. We also let $\mathrm{FIN}_{L,r}^{T}$, where $r$ is the root of $%
T$, be the set of $b\in \mathrm{FIN}^{T}$ such that the range of $b$ is
contained in $L$. Recall the notion of variable subtitution map $\sigma :%
\mathrm{dom}\left( \sigma \right) \subset L\cup T^{+}\rightarrow L\cup T^{+}$
with corresponding regressive homomorphism $f_{\sigma }:T\rightarrow T$ as
in Definition \ref{Definition:variable-substitution}. Any such map defines
an adequate partial semigroup homomorphism $\tau _{\sigma }:\mathrm{FIN}%
_{L}^{T}\rightarrow \mathrm{FIN}_{L}^{T}$ obtained by setting, for $b\in 
\mathrm{FIN}_{L}^{T}$, $\tau _{\sigma }\left( b\right) :\left\{ n\in \mathrm{%
dom}\left( b\right) :\sigma \left( b\left( n\right) \right) \in \mathrm{%
\mathrm{\mathrm{\mathrm{\mathrm{dom}}}}}\left( \sigma \right) \right\}
\rightarrow L\cup T^{+}$, $n\mapsto \sigma \left( b\left( n\right) \right) $.

\begin{theorem}
\label{Theorem:Gowers-Hales-Jewett-Furstenberg}Suppose that $A\subset G$ has
positive density, and $L\subset G$. Let $\left( g_{n}\right) $ be a sequence
of functions $g_{n}:T^{+}\rightarrow G$. Extend $g_{n}$ to a function $%
g_{n}:T^{+}\cup L\rightarrow G$ by setting $g_{n}\left( x\right) =x$. For
any sequence $\left( \mathcal{F}_{n}\right) $ of finite sets of variable
substitution maps for $L,T$, there exists a sequence of functions $%
b_{n}:T\rightarrow \mathrm{FIN}_{L}^{T}$, $t\mapsto b_{n,t}\in \mathrm{FIN}%
_{L,t}^{T}$ such that $\mathrm{\mathrm{dom}}\left( b_{n,t}\right) <\mathrm{%
dom}\left( b_{n+1,t}\right) $ and for any finite nonempty subset $F$ of $%
\omega $, variable substitution maps $\sigma _{d}\in \mathcal{F}_{d}$ and
nodes $t_{d}\in T$ for $d\in F$ such that $\left\{ f_{\sigma _{d}}\left(
t_{d}\right) :d\in F\right\} $ is a chain in $T$, one has that%
\begin{equation*}
\prod_{d\in F}\prod_{\substack{ j\in \mathrm{dom}(b_{d,t_{d}})  \\ %
b_{d,t_{d}}\left( j\right) \in \mathrm{dom}\left( \sigma _{d}\right) }}%
\left( g_{j}\circ \sigma _{d}\circ b_{d,t_{d}}\right) \left( j\right) \in
AA^{-1}\text{.}
\end{equation*}
\end{theorem}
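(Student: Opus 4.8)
The plan is to run the argument used to establish Theorem~\ref{Theorem:Gowers-Furstenberg}, with the Ramsey action of $T$ on $\mathrm{FIN}^{T}$ replaced throughout by the Ramsey action $\alpha$ of $T$ on the adequate partial semigroup $\mathrm{FIN}_{L}^{T}$ introduced in Subsection~\ref{Subsection:common}: the homomorphisms of $\alpha$ are the maps $\tau_{\sigma}$ attached to variable substitution maps $\sigma$ for $T,L$ as in Definition~\ref{Definition:variable-substitution}, each with spine the regressive homomorphism $f_{\sigma}$, and the assignment of subsemigroups is $t\mapsto\mathrm{FIN}_{L,t}^{T}$. Since that action is Ramsey, Theorem~\ref{Theorem:monochromatic-tree-action} together with Proposition~\ref{Proposition:tree} produces a function $\xi:T\to\gamma\mathrm{FIN}_{L}^{T}$ such that $\xi(t)\in\gamma\mathrm{FIN}_{L,t}^{T}$ is idempotent for each $t\in T$, $\xi(t)\le\xi(t_{0})$ whenever $t\le t_{0}$, and $\tau_{\sigma}\circ\xi=\xi\circ f_{\sigma}$ for every variable substitution map $\sigma$; in other words, $\xi$ is an order-preserving idempotent of $(\gamma\mathrm{FIN}_{L}^{T})_{\alpha}$.

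Next I would invoke the correspondence principle (Lemma~\ref{Lemma:correspondence}) to obtain a standard probability space $(X,\mu)$, a measure-preserving action $g\mapsto\tilde{g}$ of $G$ on it, and a Borel set $\tilde{A}\subseteq X$ with $\mu(\tilde{A})=d(A)$ and $d(A\cap gA)\ge\mu(\tilde{A}\cap\tilde{g}\tilde{A})$ for all $g\in G$. Using the extended functions $g_{n}:T^{+}\cup L\to G$, set $g_{b}:=\prod_{i\in\mathrm{dom}(b)}g_{i}(b(i))\in G$ for $b\in\mathrm{FIN}_{L}^{T}$, the product being taken in increasing order of $i$; this defines a partial semigroup homomorphism $\mathrm{FIN}_{L}^{T}\to G$, hence $b\mapsto\tilde{g}_{b}$ is a partial semigroup homomorphism into $\mathrm{Aut}(X,\mu)$. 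Restricting to the adequate partial semigroup $\mathrm{FIN}_{L,t}^{T}$ and applying Lemma~\ref{Lemma:idempotent-measure} to the idempotent $\xi(t)$ gives $\xi(t)-\lim_{b}\mu(\tilde{A}\cap\tilde{g}_{b}\tilde{A})\ge\mu(\tilde{A})^{2}=d(A)^{2}>0$, so by the correspondence inequality $\xi(t)-\lim_{b}d(A\cap g_{b}A)>0$. Since $d(A\cap gA)>0$ forces $A\cap gA\neq\varnothing$ and hence $g\in AA^{-1}$, we conclude that for every $t\in T$ one has $\xi(t)b$, $g_{b}\in AA^{-1}$.

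Finally I would colour $\mathrm{FIN}_{L}^{T}$ by $c(b)=1$ if $g_{b}\in AA^{-1}$ and $c(b)=0$ otherwise, so that the canonical extension satisfies $c(\xi(t))=1$ for every $t\in T$, and then apply Theorem~\ref{Theorem:monochromatic-action} to $\alpha$, to $\xi$, and to $c$, choosing $\psi_{n}^{(\mathcal{F})}$ so that it contains the homomorphisms $\tau_{\sigma}$ with $\sigma\in\mathcal{F}_{n}$ and choosing $\psi_{n}^{(S)}$ so as to force $\mathrm{dom}(x_{n}(t))<\mathrm{dom}(x_{n+1}(t))$ for every $t$. Writing $b_{n,t}:=x_{n}(t)$, the conclusion of Theorem~\ref{Theorem:monochromatic-action} says that whenever $n_{0}<\cdots<n_{\ell}$, $t_{0},\dots,t_{\ell}\in T$, and $\sigma_{i}\in\mathcal{F}_{n_{i}}$ are such that $\{f_{\sigma_{i}}(t_{i}):i\le\ell\}$ is a chain in $T$, the colour of $\tau_{\sigma_{0}}(b_{n_{0},t_{0}})+\cdots+\tau_{\sigma_{\ell}}(b_{n_{\ell},t_{\ell}})$ equals $c(\xi(t))=1$, that is, $g_{\sum_{i}\tau_{\sigma_{i}}(b_{n_{i},t_{i}})}\in AA^{-1}$. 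The proof is then completed by the identity
\[ g_{\sum_{d\in F}\tau_{\sigma_{d}}(b_{d,t_{d}})}=\prod_{d\in F}\prod_{\substack{j\in\mathrm{dom}(b_{d,t_{d}})\\ b_{d,t_{d}}(j)\in\mathrm{dom}(\sigma_{d})}}(g_{j}\circ\sigma_{d}\circ b_{d,t_{d}})(j), \]
which holds because $\tau_{\sigma_{d}}(b_{d,t_{d}})$ has domain $\{j\in\mathrm{dom}(b_{d,t_{d}}):b_{d,t_{d}}(j)\in\mathrm{dom}(\sigma_{d})\}$ and value $\sigma_{d}(b_{d,t_{d}}(j))$ at $j$, and because the domains $\mathrm{dom}(b_{d,t_{d}})$, $d\in F$, are increasing. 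The step I expect to demand the most care is exactly this last bookkeeping: matching the abstract partial-semigroup sum of variable-substituted words, term by term and in the correct order, with the nested group product in the statement, and, relatedly, checking that $b\mapsto g_{b}$ genuinely respects the partial semigroup structure once one works with the increasing enumeration of the domains.
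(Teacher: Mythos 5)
Your proposal is correct and follows exactly the route the paper intends: the paper proves this theorem by declaring it to be ``the same argument as the one in the proof of Theorem \ref{Theorem:Gowers-Furstenberg}'', namely replacing the Ramsey action on $\mathrm{FIN}^{T}$ by the Ramsey action on $\mathrm{FIN}_{L}^{T}$ from Subsection \ref{Subsection:common}, taking an order-preserving idempotent $\xi$ via Theorem \ref{Theorem:monochromatic-tree-action} and Proposition \ref{Proposition:tree}, combining Lemma \ref{Lemma:correspondence} with Lemma \ref{Lemma:idempotent-measure} to get $\xi\left( t\right) b$, $g_{b}\in AA^{-1}$, and finishing with Theorem \ref{Theorem:monochromatic-action} and the bookkeeping identity for $g_{\sum_{d}\tau _{\sigma _{d}}(b_{d,t_{d}})}$. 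The only point deserving a touch more care, equally present in the paper's version, is arranging via $\psi _{n}^{(S)}$ that the domains $\mathrm{dom}(b_{d,t_{d}})$ are pairwise separated across different nodes $t$ so that the partial sums are defined and the product is taken in increasing order.
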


\subsection{Polynomial configurations in delta sets in $\mathbb{Z}^{m}$}

We now specialize the discussion in the case when $G$ is the (additive)
group $\mathbb{Z}^{m}$ for some $m\in \mathbb{N}$. In this setting one can
obtain a \emph{polynomial }strengthening of Theorem \ref%
{Theorem:Gowers-Furstenberg}. Denote by $\mathrm{Int}[z_{0},\ldots ,z_{m-1}]$
the space of polynomial functions $p:\mathbb{Z}^{d}\rightarrow \mathbb{Z}$
that are defined polynomials with rational coefficients in the variables $%
z_{0},\ldots ,z_{m-1}$ and which map integer points to integer points.

\begin{theorem}
\label{Theorem:polynomial-Furstenberg-Gowers}Suppose $p=\left( p_{0},\ldots
,p_{d-1}\right) \in \mathrm{Int}[z_{0},\ldots ,z_{m-1}]^{d}$. We will regard 
$p$ as a function $p:\mathbb{Z}^{m}\rightarrow \mathbb{Z}^{d}$, and assume
that $p\left( 0\right) =0$. Suppose that $\left( g_{n}\right) $ is a
sequence of functions $g_{n}:T^{+}\rightarrow \mathbb{Z}^{m}$, and $A\subset 
\mathbb{Z}^{d}$ is a subset of positive density. There exists a sequence of
functions $b_{n}:T\rightarrow \mathrm{FIN}^{T}$, $t\mapsto b_{n,t}\in 
\mathrm{FIN}_{t}^{T}$ such that $\mathrm{\mathrm{dom}}\left( b_{n,t}\right) <%
\mathrm{dom}\left( b_{n+1,t}\right) $ and for any finite nonempty subset $F$
of $\omega $, regressive homomorphisms $f_{d}:T\rightarrow T$ and nodes $%
t_{d}\in T$ for $d\in F$ such that $\left\{ f_{d}\left( t_{d}\right) :d\in
F\right\} $ is a chain in $T$, one has that%
\begin{equation*}
p\left( \sum_{d\in F}\sum_{\substack{ j\in \mathrm{dom}(b_{d,t_{d}}) \\ %
(f_{d}\circ b_{d,t_{d}})\left( j\right) \in T^{+}}}\left( g_{j}\circ
f_{d}\circ b_{d,t_{d}}\right) \left( j\right) \right) \in A-A\text{.}
\end{equation*}
\end{theorem}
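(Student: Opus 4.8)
The plan is to follow the proof of Theorem~\ref{Theorem:Gowers-Furstenberg} essentially verbatim, replacing the linear recurrence statement Lemma~\ref{Lemma:idempotent-measure} by an appropriate \emph{polynomial} recurrence statement. As there, the adequate partial semigroup homomorphisms $\tau_f$ of $\mathrm{FIN}^T$, together with the subsemigroups $\mathrm{FIN}^T_t$ for $t\in T$, form a Ramsey action of $T$ on $\mathrm{FIN}^T$; hence by Theorem~\ref{Theorem:monochromatic-tree-action} and Proposition~\ref{Proposition:tree} there is an order-preserving section $\xi\colon T\to\gamma\mathrm{FIN}^T$ with $\xi(t)\in\gamma\mathrm{FIN}^T_t$ idempotent and $\tau_f\circ\xi=\xi\circ f$ for every regressive homomorphism $f\colon T\to T$. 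Setting $g_b:=\sum_{i\in\mathrm{dom}(b)}g_i(b(i))$ defines an adequate partial semigroup homomorphism $\mathrm{FIN}^T\to\mathbb{Z}^m$, and the element of $\mathbb{Z}^m$ appearing inside $p(\cdot)$ in the statement is precisely $g_w$, where $w:=\sum_{d\in F}\tau_{f_d}(b_{d,t_d})$ lies in $\mathrm{FIN}^T_t$ and $t$ is the least element of the chain $\{f_d(t_d):d\in F\}$. Thus, applying Theorem~\ref{Theorem:monochromatic-tree-actions} to the sequence $(d_n)$ and to the coloring $c$ of $\mathrm{FIN}^T$ with $c(b)=0$ if $p(g_b)\in A-A$ and $c(b)=1$ otherwise---and recalling from the proof of Theorem~\ref{Theorem:monochromatic-action} that the common colour of all such sums $w$ is the colour of $\xi(t)$---the desired conclusion reduces to showing $c(\xi(t))=0$, i.e.\ that $\{b\in\mathrm{FIN}^T:p(g_b)\in A-A\}\in\xi(t)$, for every $t\in T$.

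Because $b\mapsto g_b$ extends to a continuous semigroup homomorphism $\gamma\mathrm{FIN}^T\to\beta\mathbb{Z}^m$, the ultrafilter $\mathcal{U}_t:=g(\xi(t))$ is idempotent in $\beta\mathbb{Z}^m$, and pushing the required membership forward along $g$ turns it into $\{z\in\mathbb{Z}^m:p(z)\in A-A\}\in\mathcal{U}_t$. As $\mathcal{U}_t$ is idempotent, it is enough to prove that $R:=\{z\in\mathbb{Z}^m:p(z)\in A-A\}$ lies in \emph{every} idempotent ultrafilter on $\mathbb{Z}^m$, that is, that $R$ is an $\mathrm{IP}^{\ast}$ set. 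Applying the correspondence principle Lemma~\ref{Lemma:correspondence} to $A\subset\mathbb{Z}^d$ produces a standard probability space $(X,\mu)$, a measure-preserving $\mathbb{Z}^d$-action $k\mapsto\tilde k$, and a Borel set $\tilde A$ with $\mu(\tilde A)=d(A)>0$ and $d\big(A\cap(k+A)\big)\geq\mu\big(\tilde A\cap\tilde k\tilde A\big)$ for all $k\in\mathbb{Z}^d$; since $d\big(A\cap(p(z)+A)\big)>0$ forces $p(z)\in A-A$, the set $R$ contains $\{z\in\mathbb{Z}^m:\mu(\tilde A\cap\widetilde{p(z)}\tilde A)>0\}$, so it suffices to show that this latter set is $\mathrm{IP}^{\ast}$.

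This is a polynomial IP recurrence statement: for a measure-preserving system $(X,\mu,(\tilde k)_{k\in\mathbb{Z}^d})$, a set $\tilde A$ of positive measure, and $p\in\mathrm{Int}[z_0,\dots,z_{m-1}]^d$ with $p(0)=0$, the set $\{z\in\mathbb{Z}^m:\mu(\tilde A\cap\widetilde{p(z)}\tilde A)>0\}$ is $\mathrm{IP}^{\ast}$ in $\mathbb{Z}^m$. I would prove it in the spirit of Lemma~\ref{Lemma:idempotent-measure}: working in $H=L^2(X,\mu)$ with the Koopman unitaries $v_k$ and fixing an idempotent $\mathcal{V}$ on $\mathbb{Z}^m$, one shows by induction on the degree of $p$, via a van der Corput estimate, that $\mathcal{V}\text{-}\lim_z\langle f,v_{p(z)}f\rangle=0$ for every $f\in H$ orthogonal to the space of $\mathbb{Z}^d$-invariant vectors; here the ring structure enters only through Proposition~\ref{Proposition:extended-polynomial}, which guarantees that the ultrafilter images of $p$ and of its finite differences $z\mapsto p(z+h)-p(z)$ factor through tensor powers of $\mathcal{V}$, making differencing compatible with passing to $\mathcal{V}$-limits. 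Since every $v_{p(z)}$ fixes the constant function, this gives $\mathcal{V}\text{-}\lim_z\mu(\tilde A\cap\widetilde{p(z)}\tilde A)\geq\mu(\tilde A)^{2}>0$, whence the set above belongs to $\mathcal{V}$. Granting this lemma the proof is complete. The polynomial recurrence is the only genuine obstacle: in contrast with the homomorphic family treated in Lemma~\ref{Lemma:idempotent-measure}, the nonlinearity of $p$ precludes a one-line Hilbert-space argument and forces an induction on degree (alternatively, one may simply invoke the IP polynomial recurrence theorem of Bergelson--McCutcheon); everything else is a routine transcription of the argument for Theorem~\ref{Theorem:Gowers-Furstenberg}.
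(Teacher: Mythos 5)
Your proposal is correct and follows essentially the same route as the paper: reduce via the Ramsey action on $\mathrm{FIN}^{T}$ and Theorem \ref{Theorem:monochromatic-action} to showing that $\{b:p(g_{b})\in A-A\}\in\xi(t)$, push forward to the idempotent $\mathcal{U}_{t}=g_{\xi(t)}\in\beta\mathbb{Z}^{m}$, and combine the correspondence principle with a polynomial recurrence inequality $\mathcal{V}\text{-}\lim_{z}\mu(\tilde A\cap\widetilde{p(z)}\tilde A)\geq\mu(\tilde A)^{2}$ for idempotent $\mathcal{V}$. The only cosmetic difference is that the paper imports that recurrence inequality as a black box (the lemma of Schnell quoted just before the proof), whereas you sketch its proof by van der Corput differencing or defer to Bergelson--McCutcheon; either way the remaining steps match.
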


Theorem \ref{Theorem:polynomial-Furstenberg-Gowers} is a common
generalization of Theorem \ref{Theorem:Gowers-Furstenberg} and \cite[%
Corollary 2.2]{bergelson_ip-sets_1996}. The proof of Theorem \ref%
{Theorem:polynomial-Furstenberg-Gowers} is analogous to the proof of Theorem %
\ref{Theorem:Gowers-Furstenberg}, where Lemma \ref{Lemma:idempotent-measure}
is replaced by the following lemma, proved in \cite[Theorem 6.1]%
{schnell_idempotent_2007}.

\begin{lemma}
Let $\left( X,\mu \right) $ be a probability space, and $\tilde{A}$ be a
measurable subset of $X$. Let $T_{0},\ldots ,T_{d-1}$ be pairwise commuting
invertible measure-preserving transformations of $\left( X,\mu \right) $.
Suppose $p_{0},\ldots ,p_{d-1}\in \mathrm{Int}[z_{0},\ldots ,z_{m-1}]$ are
such that $p_{i}\left( 0\right) =0$ for $i\in d$. Suppose that $\mathcal{U}%
\in \beta \mathbb{Z}^{m}$ is an idempotent ultrafilter. Then%
\begin{equation*}
\mathcal{U}-\lim_{a\in \mathbb{Z}^{m}}\mu \left( \tilde{A}\cap \left(
\prod_{i\in d}T_{i}^{p_{i}\left( a\right) }\right) (\tilde{A})\right) \geq
\mu (\tilde{A})^{2}\text{.}
\end{equation*}
\end{lemma}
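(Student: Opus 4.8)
The plan is to reduce everything to a statement about unitary Koopman operators and then to argue, as in \cite[Theorem 6.1]{schnell_idempotent_2007}, by combining the Kronecker/weak-mixing splitting with a PET (polynomial exhaustion technique) induction. First I would pass to $H:=L^{2}\left( X,\mu \right) $ and let $V_{i}\in U(H)$ be the Koopman operator $V_{i}h:=h\circ T_{i}^{-1}$; the $V_{0},\dots ,V_{d-1}$ are commuting unitaries fixing the constant function $\mathbf{1}$. Since the $T_{i}$ commute and $p_{i}\left( 0\right) =0$, the formula $U_{a}:=\prod_{i\in d}V_{i}^{p_{i}\left( a\right) }$ defines a family of unitaries with $U_{0}=I$, and with $f:=\mathbf{1}_{\tilde{A}}\in H$ one has $\mu \left( \tilde{A}\cap \left( \prod_{i\in d}T_{i}^{p_{i}\left( a\right) }\right) \left( \tilde{A}\right) \right) =\left\langle f,U_{a}f\right\rangle \in \left[ 0,\mu \left( \tilde{A}\right) \right] $. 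Hence the lemma is equivalent to the operator inequality $\mathcal{U}-\lim_{a}\left\langle f,U_{a}f\right\rangle \geq \left\langle f,\mathbf{1}\right\rangle ^{2}$, which is what I would prove (the $\mathcal{U}$-limit exists by weak-operator compactness of the unit ball of $B(H)$).

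Next I would use the splitting $H=H_{\mathrm{c}}\oplus H_{\mathrm{wm}}$ into the Kronecker part $H_{\mathrm{c}}$ (the closed span of the finite-dimensional subspaces invariant under the group generated by $V_{0},\dots ,V_{d-1}$) and its orthogonal complement $H_{\mathrm{wm}}$; both summands are invariant under every $U_{a}$, and $\mathbf{1}\in H_{\mathrm{c}}$. Writing $f=g+h$ accordingly (here $g,h$ are real, since $H_{\mathrm{c}}$ is conjugation-invariant), the mixed terms $\left\langle g,U_{a}h\right\rangle $ and $\left\langle h,U_{a}g\right\rangle $ vanish identically, so $\mathcal{U}-\lim_{a}\left\langle f,U_{a}f\right\rangle =\mathcal{U}-\lim_{a}\left\langle g,U_{a}g\right\rangle +\mathcal{U}-\lim_{a}\left\langle h,U_{a}h\right\rangle $. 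On $H_{\mathrm{c}}$ each $U_{a}$ acts on a common eigenvector $v$ by a phase $e^{2\pi i q_{v}\left( a\right) }$ with $q_{v}$ a real polynomial on $\mathbb{Z}^{m}$ and $q_{v}\left( 0\right) =0$; since $\left\{ a:\left\Vert q_{v}\left( a\right) \right\Vert _{\mathbb{R}/\mathbb{Z}}<\delta \right\} $ is an $\mathrm{IP}^{\ast }$ set (polynomial recurrence in compact abelian groups) and every $\mathrm{IP}^{\ast }$ set belongs to every idempotent ultrafilter, one gets $\left\{ a:\left\Vert U_{a}g-g\right\Vert <\varepsilon \right\} \in \mathcal{U}$ for all $\varepsilon >0$, whence $\mathcal{U}-\lim_{a}\left\langle g,U_{a}g\right\rangle =\left\Vert g\right\Vert ^{2}\geq \left\langle g,\mathbf{1}\right\rangle ^{2}=\left\langle f,\mathbf{1}\right\rangle ^{2}$. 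It therefore remains to establish $\mathcal{U}-\lim_{a}\left\langle h,U_{a}h\right\rangle =0$.

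This last vanishing is the main obstacle: it is the idempotent-ultrafilter form of Bergelson's weakly-mixing polynomial ergodic theorem for commuting transformations. I would prove it by induction on the PET complexity of the vector polynomial $\left( p_{0},\dots ,p_{d-1}\right) $, realizing the idempotent $\mathcal{U}$ through an IP-system via the Galvin--Glazer--Hindman theorem and using the Hilbert-space van der Corput inequality: because the $V_{i}$ commute, $U_{a+b}U_{a}^{-1}=\prod_{i\in d}V_{i}^{p_{i}\left( a+b\right) -p_{i}\left( a\right) }$, and for fixed $b$ the exponents $a\mapsto p_{i}\left( a+b\right) -p_{i}\left( a\right) $ have strictly smaller degree in $a$, so the van der Corput step reduces the relevant inner-product average to a family of strictly smaller complexity, to which the inductive hypothesis applies. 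The base case, where every $p_{i}$ has degree at most one, is exactly the situation of Lemma \ref{Lemma:idempotent-measure}: then $a\mapsto U_{a}$ is a genuine homomorphism, $\mathcal{U}-\lim_{a}U_{a}$ is an orthogonal projection, and it kills $h\in H_{\mathrm{wm}}$. The delicate point is the bookkeeping of the induction—choosing the right complexity statistic on polynomial families over $\mathbb{Z}^{m}$ and checking that differencing strictly decreases it—but this is standard PET; by contrast the reductions above are routine, and staying at the operator level throughout sidesteps the measurability issues one would meet when exchanging a $\mathcal{U}$-limit with an integral against a joint spectral measure. The details are carried out in \cite[Theorem 6.1]{schnell_idempotent_2007}, and the case $d=1$, $\deg p_{0}=1$, recovers Lemma \ref{Lemma:idempotent-measure}.
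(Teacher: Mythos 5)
The paper does not actually prove this lemma; it simply quotes it from \cite[Theorem 6.1]{schnell_idempotent_2007}, so what has to be judged is whether your outline is a correct account of a proof. Most of your reductions are fine: the passage to Koopman operators, the invariance of the splitting $H=H_{\mathrm{c}}\oplus H_{\mathrm{wm}}$ under every $U_{a}$, the vanishing of the cross terms, and the treatment of the compact part via polynomial $\mathrm{IP}^{\ast}$ recurrence (\cite{bergelson_ip-sets_1996}) together with the fact that $\mathrm{IP}^{\ast}$ sets lie in every idempotent ultrafilter. The fatal step is the claim that $\mathcal{U}-\lim_{a}\left\langle h,U_{a}h\right\rangle =0$ for every $h\in H_{\mathrm{wm}}$ and every idempotent $\mathcal{U}$. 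This is false, already in your base case $d=m=1$, $p_{0}\left( a\right) =a$. (A first red flag: the principal ultrafilter at $0$ is idempotent and gives $\left\langle h,U_{0}h\right\rangle =\left\Vert h\right\Vert ^{2}$.) For a nondegenerate counterexample, take $T$ weakly mixing and rigid (the generic transformation is both), so that $T^{n_{k}}\rightarrow I$ strongly along some $n_{k}\rightarrow \infty $. The sets $R_{\delta }=\{n:\left\Vert T^{n}h-h\right\Vert <\delta \}$ satisfy $R_{\delta }+R_{\delta }\subset R_{2\delta }$, hence $\bigcap_{\delta >0}\overline{R_{\delta }}$ is a nonempty closed subsemigroup of $\beta \mathbb{N}$ and contains a (nonprincipal) idempotent $\mathcal{U}$ by Ellis' theorem; along this $\mathcal{U}$ one has $\mathcal{U}-\lim_{n}\left\langle h,T^{n}h\right\rangle =\left\Vert h\right\Vert ^{2}\neq 0$ even though $h$ lies in the weak-mixing component. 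Equivalently, the orthogonal projection $\mathcal{U}-\lim_{a}U_{a}$ that you invoke in the base case projects onto the space of $\mathcal{U}$-rigid vectors, which may strictly contain the Kronecker part; it does not kill $H_{\mathrm{wm}}$. Since the statement you propose to prove by PET induction is false, the induction cannot close.

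What is true, and what suffices, is the operator statement that $Q:=\mathcal{U}-\lim_{a}U_{a}$ (weak operator topology) is an orthogonal projection with $Q\mathbf{1}=\mathbf{1}$; this is what the van der Corput/PET induction in \cite{schnell_idempotent_2007} actually establishes, and it yields the lemma directly via $\mathcal{U}-\lim_{a}\left\langle f,U_{a}f\right\rangle =\left\langle f,Qf\right\rangle =\left\Vert Qf\right\Vert ^{2}\geq \left\langle Qf,\mathbf{1}\right\rangle ^{2}=\left\langle f,Q\mathbf{1}\right\rangle ^{2}=\left\langle f,\mathbf{1}\right\rangle ^{2}$, with no Kronecker/weak-mixing splitting at all. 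If you prefer to keep your decomposition, the claim to aim for on $H_{\mathrm{wm}}$ is only $\mathcal{U}-\lim_{a}\left\langle h,U_{a}h\right\rangle \geq 0$ (the restriction of $Q$ to $H_{\mathrm{wm}}$ is still a positive operator), not that the limit vanishes. The identity you asserted is the one valid for Ces\`{a}ro averages (Bergelson's weakly mixing PET), and it does not transfer to limits along an arbitrary idempotent ultrafilter.
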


We now present a proof of Theorem \ref{Theorem:polynomial-Furstenberg-Gowers}%
.

\begin{proof}[Proof of Theorem \protect\ref%
{Theorem:polynomial-Furstenberg-Gowers}]
Let the functions $\mathrm{FIN}^{T}\rightarrow \mathbb{Z}^{m}$, $b\mapsto
g_{b}$, and $\xi :T\rightarrow \gamma \mathrm{FIN}^{T}$ be defined as in the
proof of Theorem \ref{Theorem:Gowers-Furstenberg}. Let also $\left( X,\mu
\right) $ and $\tilde{A}$ be obtained from $A$ as in Lemma \ref%
{Lemma:correspondence}. The function $\mathrm{FIN}^{T}\rightarrow \mathbb{Z}%
^{m}$, $b\mapsto g_{b}$, extends to a continuous semigroup homomorphism $%
\gamma \mathrm{FIN}^{T}\rightarrow \beta \mathbb{Z}^{m}$, $\mathcal{V}%
\mapsto g_{\mathcal{V}}$. Set $\mathcal{U}_{t}:=g_{\xi \left( t\right) }$
for $t\in T$, and observe that $\mathcal{U}_{t}\in \beta \mathbb{Z}^{m}$ is
an idempotent ultrafilter. Let also $e_{0},\ldots ,e_{d-1}$ be the canonical
generators of $\mathbb{Z}^{d}$, and $\tilde{e}_{0},\ldots ,\tilde{e}_{d-1}$
be the corresponding elements of $\mathrm{Aut}\left( X,\mu \right) $.
Observe that%
\begin{equation*}
\prod_{i\in d}\tilde{e}_{i}^{p_{i}\left( a\right) }=\prod_{i\in d}\widetilde{%
p_{i}\left( a\right) }=\widetilde{p\left( a\right) }\text{.}
\end{equation*}%
Therefore we have that, for every $t\in T$,%
\begin{equation*}
d(A)^{2}\leq \mathcal{U}_{t}-\lim_{a\in \mathbb{Z}^{m}}\mu \left( \tilde{A}%
\cap \left( \prod_{i\in d}\tilde{e}_{i}^{p_{i}\left( a\right) }\right) 
\tilde{A}\right) =\lim_{b\in \mathrm{FIN}^{T}}\mu \left( \tilde{A}\cap 
\widetilde{p\left( g_{b}\right) }\tilde{A}\right) \leq d\left( A\cap
(p\left( g_{b}\right) +A)\right) \text{.}
\end{equation*}%
Therefore $\xi \left( t\right) b$, $p\left( g_{b}\right) \in A-A$. The
conclusion follows again from Theorem \ref{Theorem:monochromatic-action}.
\end{proof}

In a similar fashion, one can prove the following more general result.

\begin{theorem}
Suppose $p=\left( p_{0},\ldots ,p_{d-1}\right) \in \mathrm{Int}[z_{0},\ldots
,z_{m-1}]^{d}$ is such that $p\left( 0\right) =0$. Suppose that $\left(
g_{n}\right) $ is a sequence of functions $g_{n}:T^{+}\rightarrow \mathbb{Z}%
^{m}$, and $A\subset \mathbb{Z}^{d}$ is a subset of positive density. Fix
also a subset $L$ of $\mathbb{Z}^{d}$. Let $\left( \mathcal{F}_{n}\right) $
be a sequence of variable substitution maps for $T,L$. There exists a
sequence of functions $b_{n}:T\rightarrow \mathrm{FIN}_{L}^{T}$, $t\mapsto
b_{n,t}\in \mathrm{FIN}_{L,t}^{T}$ such that $\mathrm{\mathrm{dom}}\left(
b_{n,t}\right) <\mathrm{dom}\left( b_{n+1,t}\right) $ and for any finite
nonempty subset $F$ of $\omega $, $\sigma _{d}\in \mathcal{F}_{d}$ and nodes 
$t_{d}\in T$ for $d\in F$ such that $\left\{ f_{\sigma _{d}}\left(
t_{d}\right) :d\in F\right\} $ is a chain in $T$, one has that%
\begin{equation*}
p\left( \sum_{d\in F}\sum_{\substack{ j\in \mathrm{dom}(b_{d,t_{d}})  \\ %
b_{d,t_{d}}\left( j\right) \in \mathrm{dom}\left( \sigma _{d}\right) }}%
\left( g_{j}\circ \sigma _{d}\circ b_{d,t_{d}}\right) \left( j\right)
\right) \in A-A.
\end{equation*}
\end{theorem}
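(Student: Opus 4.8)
The plan is to carry out the proof of Theorem \ref{Theorem:Gowers-Hales-Jewett-Furstenberg} with the two modifications that already take one from Theorem \ref{Theorem:Gowers-Furstenberg} to Theorem \ref{Theorem:polynomial-Furstenberg-Gowers}: the delta set $AA^{-1}$ is replaced by the additive delta set $A-A$, and the recurrence estimate of Lemma \ref{Lemma:idempotent-measure} is replaced by the polynomial recurrence lemma used in the proof of Theorem \ref{Theorem:polynomial-Furstenberg-Gowers} (the lemma of Schnell). First I would consider the Ramsey action $\alpha$ of $T$ on the adequate partial semigroup $\mathrm{FIN}_L^T$ given by the variable substitution maps, as in Subsection \ref{Subsection:common}, and use Theorem \ref{Theorem:monochromatic-tree-action} together with Proposition \ref{Proposition:tree} to produce an order-preserving idempotent $\xi\colon T\to\gamma\mathrm{FIN}_L^T$ with $\xi(t)\in\gamma\mathrm{FIN}_{L,t}^T$ idempotent, $\xi(t)\leq\xi(t_0)$ whenever $t\leq t_0$, and $\tau_\sigma\circ\xi=\xi\circ f_\sigma$ for every variable substitution map $\sigma$. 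Extending each $g_n$ to $T^+\cup L$ by declaring it to be the identity on $L$, I would set $g_b:=\sum_{i\in\mathrm{dom}(b)}g_i(b(i))\in\mathbb{Z}^m$ for $b\in\mathrm{FIN}_L^T$; this is an adequate partial semigroup homomorphism $\mathrm{FIN}_L^T\to\mathbb{Z}^m$, hence extends to a continuous semigroup homomorphism $\gamma\mathrm{FIN}_L^T\to\beta\mathbb{Z}^m$, $\mathcal{V}\mapsto g_{\mathcal{V}}$, so that $\mathcal{U}_t:=g_{\xi(t)}$ is an idempotent element of $\beta\mathbb{Z}^m$ for each $t\in T$.

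Next I would apply the correspondence principle (Lemma \ref{Lemma:correspondence}) to $A\subset\mathbb{Z}^d$, obtaining a probability space $(X,\mu)$, a measure-preserving action of $\mathbb{Z}^d$ on $X$ with pairwise commuting generators $\tilde{e}_0,\dots,\tilde{e}_{d-1}$, and a Borel set $\tilde{A}$ with $\mu(\tilde{A})=d(A)$. Since $p(0)=0$ and $\prod_{i\in d}\tilde{e}_i^{p_i(a)}=\widetilde{p(a)}$ for $a\in\mathbb{Z}^m$, the lemma of Schnell applied to the idempotent ultrafilter $\mathcal{U}_t$ gives $\mathcal{U}_t-\lim_a\mu(\tilde{A}\cap\widetilde{p(a)}\tilde{A})\geq\mu(\tilde{A})^2=d(A)^2$. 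Pulling this back along $b\mapsto g_b$, so that the $\mathcal{U}_t$-limit over $a$ becomes the $\xi(t)$-limit over $b$, and using $\mu(\tilde{A}\cap\widetilde{p(g_b)}\tilde{A})\leq d(A\cap(p(g_b)+A))$ from Lemma \ref{Lemma:correspondence}, one gets, since $A$ has positive density, that the set $\{b:p(g_b)\in A-A\}$ belongs to $\xi(t)$ for every $t\in T$. Equivalently, coloring $\mathrm{FIN}_L^T$ by $c(b)=1$ if $p(g_b)\in A-A$ and $c(b)=0$ otherwise, and extending $c$ canonically to $\beta\mathrm{FIN}_L^T$, one has $c(\xi(t))=1$ for every $t\in T$.

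Finally I would invoke Theorem \ref{Theorem:monochromatic-action} for this action, coloring, and idempotent $\xi$, with $\psi_n^{(\mathcal{F})}$ taken constantly equal to $\mathcal{F}_n$ and $\psi_n^{(S)}$ chosen so as to record the images of the maps applied at earlier stages, exactly as in the proof of Theorem \ref{Theorem:Gowers-Hales-Jewett-Furstenberg}; this guarantees that all the finite sums below are defined and that $\mathrm{dom}(b_{n,t})<\mathrm{dom}(b_{n+1,t})$. Writing $b_{n,t}:=x_n(t)\in\mathrm{FIN}_{L,t}^T$ for the resulting functions, the conclusion of Theorem \ref{Theorem:monochromatic-action} says that any sum $\sum_{d\in F}\tau_{\sigma_d}(b_{d,t_d})$ with $\sigma_d\in\mathcal{F}_d$ and $\{f_{\sigma_d}(t_d):d\in F\}$ a chain with least element $t$ has color $c(\xi(t))=1$, that is, $p(g_{\sum_{d\in F}\tau_{\sigma_d}(b_{d,t_d})})\in A-A$; and unwinding $g$ on this sum, using $\mathrm{dom}(\tau_{\sigma_d}(b_{d,t_d}))=\{j\in\mathrm{dom}(b_{d,t_d}):b_{d,t_d}(j)\in\mathrm{dom}(\sigma_d)\}$ and $\tau_{\sigma_d}(b_{d,t_d})(j)=\sigma_d(b_{d,t_d}(j))$, turns this into the displayed inclusion. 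The one point I expect to need care is the bookkeeping allowing the variable-substitution Ramsey action of Subsection \ref{Subsection:common} and the lemma of Schnell to be used simultaneously: that $b\mapsto g_b$ is still a partial, adequate semigroup homomorphism into $\mathbb{Z}^m$ once alphabet letters may occur in $b$, and that its idempotent image $\mathcal{U}_t$ genuinely sits among the idempotents of $\beta\mathbb{Z}^m$ to which that lemma applies---neither presenting anything new beyond what is already verified in the proofs of Theorems \ref{Theorem:Gowers-Hales-Jewett-Furstenberg} and \ref{Theorem:polynomial-Furstenberg-Gowers}.
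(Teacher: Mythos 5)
Your proposal is correct and is exactly the argument the paper intends: the paper gives no separate proof of this theorem beyond the remark that it follows ``in a similar fashion,'' namely by running the proof of Theorem \ref{Theorem:Gowers-Hales-Jewett-Furstenberg} (the variable-substitution Ramsey action on $\mathrm{FIN}_L^T$, the homomorphism $b\mapsto g_b$, and Theorem \ref{Theorem:monochromatic-action}) with the Schnell recurrence lemma in place of Lemma \ref{Lemma:idempotent-measure}, which is precisely what you do. The only point worth flagging is notational rather than mathematical: for the extension $g_n(x)=x$ on $L$ to land in $\mathbb{Z}^m$ one should read $L\subset\mathbb{Z}^m$ in the statement (as in Theorem \ref{Theorem:Gowers-Hales-Jewett-Furstenberg}, where $L\subset G$), and your proof implicitly adopts this correct reading.
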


\bibliographystyle{amsplain}
\bibliography{bibliography}

\end{document}